\documentclass[12pt]{amsart}
\usepackage[dvipsnames]{xcolor}
\usepackage{tikz}
\usetikzlibrary{arrows.meta}
\usepackage{graphicx}
\usepackage{amssymb, amsthm, amsmath}
\usepackage{etoolbox}
\apptocmd{\sloppy}{\hbadness 10000\relax}{}{}
\apptocmd{\sloppy}{\vbadness 10000\relax}{}{}
\usepackage[justification=justified]{caption}
\usepackage{hyperref}
\usepackage[letterpaper,margin=1.1in]{geometry}

\usepackage{booktabs}
\usepackage{array}
\usepackage{longtable}
\usepackage{enumitem}

\definecolor{hc}{RGB}{0,128,128}
\definecolor{vc}{RGB}{142,69,133}

\newtheorem{introthm}{Theorem}

\newtheorem{theorem}{Theorem}[section]
\newtheorem{proposition}[theorem]{Proposition}
\newtheorem{lemma}[theorem]{Lemma}
\newtheorem{corollary}[theorem]{Corollary}

\theoremstyle{definition}
\newtheorem{definition}[theorem]{Definition}

\theoremstyle{remark}

\numberwithin{equation}{section}
\numberwithin{figure}{section}
\numberwithin{table}{section}

\allowdisplaybreaks[1]
\newcommand{\st}[1]{\mathrm{#1}}

\title[Axis-parallel square packing]{Axis-parallel square packing with an exact formula for optimal packing of seven squares}
\date{October 15, 2026}
\author{Matthew Badger}
\subjclass[2020]{Primary 52C15; Secondary 05B40, 90C27.}
\keywords{square packing, packing squares in a square, axis-parallel squares, packing strategies, piecewise linear functions, Lipschitz constants}
\address{Department of Mathematics\\ University of Connecticut\\ Storrs, CT 06269-1009, USA}
\email{matthew.badger@uconn.edu}
\begin{document}
\begin{abstract}
How large a square is needed to hold axis-parallel squares of prescribed
side lengths? We give exact formulas for the optimal side through seven
squares, with explicit constructions and geometric lower-bound proofs.
For every $n\ge6$, the optimum for $n$ squares is a sum of at most $n-3$
of the prescribed side lengths. This bound is sharp on an open set of side
lengths and yields the optimal Euclidean Lipschitz constant $\sqrt{n-3}$
of the optimum as a function of the $n$ side lengths.
\end{abstract}

\maketitle

\setcounter{tocdepth}{1}
\tableofcontents

\section{Introduction}\label{sec:introduction}
For every sequence $s=(s_i)_{i=0}^\infty$ of real
numbers with $s_0\geq s_1 \geq \cdots\geq 0$, let $L^*(s)$ denote the smallest side length of an axis-parallel square that contains
axis-parallel squares of side lengths $s_0,s_1,\ldots$ with pairwise disjoint
interiors. Rotations are not allowed and sides of length zero signify
absent squares. By monotonicity and additivity of area and by a theorem of Moon and
Moser~\cite{MoonMoser67}, $L^*(s)$ is finite exactly when $s\in\ell_2$,
and $\|s\|_2\le L^*(s)\le\sqrt2\,\|s\|_2$. Hence $L^*$ is defined on the
cone $\ell_2^\downarrow$ of nonincreasing nonnegative sequences in $\ell_2$.
We identify a finite nonincreasing nonnegative tuple with the sequence in
$\ell_2^\downarrow$ obtained by appending zeros.
For all $n\ge1$ and $s\in\ell_2^\downarrow$, we write
\[
L_n^*(s)=L^*(s_0,\ldots,s_{n-1},0,0,\ldots)
\]
for the optimum for the $n$ largest squares.
Section~\ref{sec:function} shows that $L^*$ is continuous and $L^*_n$ is piecewise linear for each $n$.

\begin{figure}[t]\centering
\begin{minipage}[t]{0.32\textwidth}\centering
\begin{tikzpicture}[x=0.2cm,y=0.2cm]
\useasboundingbox (0,-2.295) rectangle (17.34,17);
\filldraw[fill=black!7,draw=black,line width=.35pt] (0,0) rectangle (9,9);
\node[font=\scriptsize,inner sep=0pt] at (4.5,4.5) {$0$};
\filldraw[fill=black!7,draw=black,line width=.35pt] (9,0) rectangle (17,8);
\node[font=\scriptsize,inner sep=0pt] at (13,4) {$1$};
\filldraw[fill=black!7,draw=black,line width=.35pt] (0,10) rectangle (7,17);
\node[font=\scriptsize,inner sep=0pt] at (3.5,13.5) {$2$};
\filldraw[fill=black!7,draw=black,line width=.35pt] (11,11) rectangle (17,17);
\node[font=\scriptsize,inner sep=0pt] at (14,14) {$3$};
\draw[line width=.7pt] (0,0) rectangle (17,17);
\filldraw[fill=hc!25,draw=hc!70!black,line width=.25pt] (0,-1.955) rectangle (9,-0.68);
\node[font=\tiny,inner sep=0pt] at (4.5,-1.3175) {$0$};
\filldraw[fill=hc!25,draw=hc!70!black,line width=.25pt] (9,-1.955) rectangle (17,-0.68);
\node[font=\tiny,inner sep=0pt] at (13,-1.3175) {$1$};
\end{tikzpicture}\par
\smallskip{\footnotesize $L_4^*=17$\\ $s=(9,8,7,6)$}
\end{minipage}\hfill
\begin{minipage}[t]{0.32\textwidth}\centering
\begin{tikzpicture}[x=0.1889cm,y=0.1889cm]
\useasboundingbox (0,-2.43) rectangle (18.36,18);
\filldraw[fill=black!7,draw=black,line width=.35pt] (0,0) rectangle (9,9);
\node[font=\scriptsize,inner sep=0pt] at (4.5,4.5) {$0$};
\filldraw[fill=black!7,draw=black,line width=.35pt] (10,0) rectangle (18,8);
\node[font=\scriptsize,inner sep=0pt] at (14,4) {$1$};
\filldraw[fill=black!7,draw=black,line width=.35pt] (0,11) rectangle (7,18);
\node[font=\scriptsize,inner sep=0pt] at (3.5,14.5) {$2$};
\filldraw[fill=black!7,draw=black,line width=.35pt] (7,12) rectangle (13,18);
\node[font=\scriptsize,inner sep=0pt] at (10,15) {$3$};
\filldraw[fill=black!7,draw=black,line width=.35pt] (13,13) rectangle (18,18);
\node[font=\scriptsize,inner sep=0pt] at (15.5,15.5) {$4$};
\draw[line width=.7pt] (0,0) rectangle (18,18);
\filldraw[fill=hc!25,draw=hc!70!black,line width=.25pt] (0,-2.07) rectangle (7,-0.72);
\node[font=\tiny,inner sep=0pt] at (3.5,-1.395) {$2$};
\filldraw[fill=hc!25,draw=hc!70!black,line width=.25pt] (7,-2.07) rectangle (13,-0.72);
\node[font=\tiny,inner sep=0pt] at (10,-1.395) {$3$};
\filldraw[fill=hc!25,draw=hc!70!black,line width=.25pt] (13,-2.07) rectangle (18,-0.72);
\node[font=\tiny,inner sep=0pt] at (15.5,-1.395) {$4$};
\end{tikzpicture}\par
\smallskip{\footnotesize $L_5^*=18$\\ $s=(9,8,7,6,5)$}
\end{minipage}\hfill
\begin{minipage}[t]{0.32\textwidth}\centering
\begin{tikzpicture}[x=0.1214cm,y=0.1214cm]
\useasboundingbox (0,-0.56) rectangle (31.78,28);
\filldraw[fill=black!7,draw=black,line width=.35pt] (0,0) rectangle (14,14);
\node[font=\scriptsize,inner sep=0pt] at (7,7) {$0$};
\filldraw[fill=black!7,draw=black,line width=.35pt] (15,0) rectangle (28,13);
\node[font=\scriptsize,inner sep=0pt] at (21.5,6.5) {$1$};
\filldraw[fill=black!7,draw=black,line width=.35pt] (0,18) rectangle (10,28);
\node[font=\scriptsize,inner sep=0pt] at (5,23) {$2$};
\filldraw[fill=black!7,draw=black,line width=.35pt] (10,19) rectangle (19,28);
\node[font=\scriptsize,inner sep=0pt] at (14.5,23.5) {$3$};
\filldraw[fill=black!7,draw=black,line width=.35pt] (20,20) rectangle (28,28);
\node[font=\scriptsize,inner sep=0pt] at (24,24) {$4$};
\filldraw[fill=black!7,draw=black,line width=.35pt] (21,13) rectangle (28,20);
\node[font=\scriptsize,inner sep=0pt] at (24.5,16.5) {$5$};
\draw[line width=.7pt] (0,0) rectangle (28,28);
\filldraw[fill=vc!25,draw=vc!70!black,line width=.25pt] (29.12,0) rectangle (31.22,13);
\node[font=\tiny,inner sep=0pt] at (30.17,6.5) {$1$};
\filldraw[fill=vc!25,draw=vc!70!black,line width=.25pt] (29.12,13) rectangle (31.22,20);
\node[font=\tiny,inner sep=0pt] at (30.17,16.5) {$5$};
\filldraw[fill=vc!25,draw=vc!70!black,line width=.25pt] (29.12,20) rectangle (31.22,28);
\node[font=\tiny,inner sep=0pt] at (30.17,24) {$4$};
\end{tikzpicture}\par
\smallskip{\footnotesize $L_6^*=28$\\ $s=(14,13,10,9,8,7)$}
\end{minipage}
\caption{Optimal packings of four, five, and six squares. In the second and third
panels, $L_5^*=L_4^*+1$ and $L_6^*=L_5^*+1$. A teal bar records the horizontal
projections of a chain whose side sum is the side length of the square container; a purple bar
on the right similarly records a vertical chain.}
\label{fig:small}
\end{figure}

\subsection{Exact formulas}
The exact formulas through seven squares are as follows.
\begin{introthm}\label{thm:intro-formulas}
For every $s\in\ell_2^\downarrow$, we have $L_1^*(s)=s_0$ and
$L_n^*(s)=s_0+s_1$ for $2\le n\le4$. For five and six squares,
\begin{align}
L_5^*(s)&=\max(s_0+s_1,\ s_2+s_3+s_4),\label{eq:intro-five}\\
L_6^*(s)&=\max(s_0+s_1,\ s_2+s_3+s_4,\ s_1+s_4+s_5).
\label{eq:intro-six}
\end{align}
For seven squares,
\begin{equation}\label{eq:intro-seven}
\begin{aligned}
L_7^*(s)=\max\bigl(L_6^*(s),\min\{&\max(s_3+s_4+s_5+s_6,\ s_1+s_2+s_5),\\
&\max(s_0+s_5+s_6,\ s_1+s_3+s_6),\ s_1+s_4+s_5+s_6\}\bigr).
\end{aligned}
\end{equation}
\end{introthm}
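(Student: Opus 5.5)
Each formula splits into an upper bound, given by an explicit packing, and a matching lower bound, given by a geometric obstruction. For every lower bound I will use one tool, which I call the \emph{chain lemma}. Suppose squares $Q_{i_1},\dots,Q_{i_k}$ pairwise cannot be separated by a vertical line, because pairwise overlaps in the vertical direction force them to be ordered left to right. Then the container side is at least $s_{i_1}+\cdots+s_{i_k}$. The same holds with the roles of horizontal and vertical exchanged. A simple instance: if two squares of sides $a,b$ sit in a container of side $L<a+b$, their horizontal projections overlap, and so do their vertical projections. Hence they intersect, so every pair with $s_i+s_j>L$ is impossible.

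For $n\le4$ the argument is direct. The two largest squares alone force $L\ge s_0+s_1$, by the pair observation. Conversely, placing the four largest squares in the four corners works when $L=s_0+s_1$, since $s_0+s_3,\ s_1+s_2\le s_0+s_1$. For $n=5$ and $n=6$ the upper bounds come from the layouts of Figure~\ref{fig:small}. I place $s_0,s_1$ along the bottom and $s_2,s_3,s_4$ along the top row. For six squares I additionally stack $s_1,s_5,s_4$ in the right column. It remains to verify, by elementary inequalities in the ordered coordinates, that nothing overlaps when $L$ is the stated maximum.

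The lower bounds for five and six squares go by pigeonhole. Let $L<s_2+s_3+s_4$. The pair observation rules out every pair from $\{0,1,2,3,4\}$ meeting $s_i+s_j>L$. Here I will show the container cannot hold three pairwise "co-row" squares among $s_2,s_3,s_4$ together with $s_0,s_1$. The plan is to take the squares meeting a fixed horizontal line through the center and use $s_0+s_1\le L$ to bound how many fit. Any two of the five squares must be separated either horizontally or vertically. So I will model the packing as a 2-coloring of $K_5$: for each pair, record whether the two squares are separated horizontally or vertically. The horizontal relation is a partial order, and an antichain in it is a vertical chain. A comparability argument (Dilworth/Mirsky) then yields a horizontal or vertical chain of three squares whose side sum is at least $s_2+s_3+s_4$. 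Combined with the pair bound this gives \eqref{eq:intro-five}. For six squares I repeat the argument on $K_6$ and case-split on where $Q_0$ and $Q_1$ sit; the new chain $\{1,4,5\}$ appears when neither the $\{2,3,4\}$ chain nor the $\{0,1\}$ chain is realized.

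For seven squares, \eqref{eq:intro-seven} is a max of $L_6^*$ with a min of three terms. The upper bound therefore requires one packing achieving $\max(L_6^*,T)$ for each of the three terms $T$, which gives three explicit layout families. The first is a bottom row $s_0,s_1$ with the top filled by $s_3,s_4,s_5,s_6$ or by $s_1,s_2,s_5$. The second is $s_0$ stacked with $s_5,s_6$ in one column. The third is a right column $s_1,s_4,s_5,s_6$. For the lower bound I assume $L<$ each of the three min terms and $L\ge L_6^*$, and then argue by contradiction. I enumerate the possible orientation patterns (horizontal or vertical separation for each pair) of $K_7$ that are consistent with the pair and triple exclusions. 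In each surviving pattern I exhibit a chain whose sum violates the assumption. This enumeration is the main obstacle: there are many patterns, and the chain lemma must be applied cleverly. I would organize the cases by the position of $Q_6$ relative to the six-square configuration, which is forced to be near-extremal, and invoke the earlier piecewise-linearity of $L_7^*$ (Section~\ref{sec:function}) to reduce the verification to the finitely many linear regions.
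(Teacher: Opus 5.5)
Your treatment of $n\le5$ is sound and matches the paper. The four-corner placement and the layouts of Figure~\ref{fig:small} give the upper bounds. Your Mirsky argument also works: antichains of the ``left of'' order are vertical chains, so five squares contain a three-square chain, of side sum at least $s_2+s_3+s_4$. This is exactly the paper's $A/B$ height partition.

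\textbf{Six squares.} The gap begins with the term $s_1+s_4+s_5$ in \eqref{eq:intro-six}. Mirsky on six squares still yields only a three-square chain, whose side sum can be as small as $s_3+s_4+s_5<s_1+s_4+s_5$. Your proposed dichotomy is not a real one. The pair $\{0,1\}$ is always a chain in some direction, since every pair is. A chain $\{2,3,4\}$ gives only $L\ge s_2+s_3+s_4$, which does not exclude $L<s_1+s_4+s_5$; the optimal packing in Figure~\ref{fig:small} realizes both $\{2,3,4\}$ and $\{1,5,4\}$.

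The only nontrivial case is $s_4+s_5>s_0$, which also gives $s_2+s_3>s_1$. There you must rule out a packing with $L<s_1+s_4+s_5$, even though every triple through $0$ or $1$ and every four squares clear the threshold. This needs genuine forced geometry, which the paper gets from the windowpane of $Q_0,\dots,Q_3$ (Lemma~\ref{lem:pane}), cornering $Q_0$, and Lemma~\ref{lem:fixed-core}. After interchanging axes:
\begin{itemize}
\item $Q_1$ lies right of $Q_0$ with $y_1<s_0$, and can be lowered to the floor;
\item $Q_2,Q_3$ are x-disjoint from each other and from $Q_4,Q_5$;
\item $Q_4,Q_5$ lie above height $s_1$.
\end{itemize}
Then either $Q_4,Q_5$ are x-disjoint, giving the horizontal chain $\{2,3,4,5\}$, or they stack above $Q_1$, giving the vertical chain $\{1,4,5\}$. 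The auxiliary sum $s_2+s_3+s_4+s_5$ is discarded because it exceeds $s_1+s_4+s_5$ in this regime. ``Case-split on where $Q_0$ and $Q_1$ sit'' supplies none of these relations.

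\textbf{Seven squares.} Here the proposal contains no lower-bound argument. An enumeration is finite in principle: by Lemma~\ref{lem:strategy-side} it amounts to checking $\ell_\sigma\ge F_7$ for every strategy on seven labels. But you neither carry it out nor give a lemma that makes it tractable. Piecewise linearity does not reduce it to finitely many checks, because the linear pieces of $L_7^*$ are exactly what is unknown and $L_7^*$ is not convex. Your hypothesis is also not a fixed list of exclusions: $L<\max(W,H)$ and $L<\max(C_0,C_3)$ are disjunctions, so which chains clear depends on $s$.

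The paper handles this with three separate bounds (Proposition~\ref{prop:seven-lower}), where $W_{24}=s_2+s_4+s_5+s_6$:
\begin{itemize}
\item $L_7^*\ge\min(C_0,W)$;
\item $L_7^*\ge\min(C_3,R)$;
\item $L_7^*\ge\min(C_0,H,W_{24})$.
\end{itemize}
It combines them arithmetically: when $C_0\ge F_7>L_6^*\ge s_0+s_1$, one gets $s_5+s_6>s_1$, hence $W_{24}>H$. The first and third bounds rest on the corner frame (Lemmas~\ref{lem:full-frame} and~\ref{lem:two-cover}). If every triple through $Q_0$ clears, $Q_0$ can be cornered, and $Q_1,\dots,Q_6$ are covered by a vertical and a horizontal chain sharing one square, forcing a four-square chain. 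The second bound is the six-square column bound again, at $n=7$.

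Your seven-square upper bounds are also misdescribed. The top strategy is a single packing containing both the horizontal chain $\{3,4,6,5\}$ and the vertical chain $\{1,2,5\}$, so its cost is $\max(L_6^*,W,H)$, not a choice between two alternatives. The center strategy's cost also involves the vertical chain $\{1,6,3\}$, with sum $C_3$.
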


A horizontal chain is a family of
squares whose horizontal projections have disjoint interiors; vertical
chains are defined analogously. The side sum of a collection of squares is
the sum of their side lengths. In any packing of axis-parallel squares, the
side sum of a chain is at most the container side. The formulas for at most four squares are
elementary. The five-square
formula~\eqref{eq:intro-five} is implicit in work of Lan, D\'osa, Han,
Zhou, and Benk\H{o}~\cite{LanDosa13} on packing as many squares as
possible into a unit square. In proving that a greedy shelf algorithm is
optimal whenever at most five squares fit \cite[Theorem~1]{LanDosa13},
they show that five squares fit into a unit square if and only if the two
largest sides sum to at most one and the three smallest sides sum to at
most one; the second panel of Figure~\ref{fig:small} illustrates this. We include a short proof of~\eqref{eq:intro-five}, since its
chain argument is the starting point for six and seven squares.

The growth of complexity in the formula for $L^*_n$ from $n=6$ to $n=7$
is reflected in the minimum size of a strategy family attaining the optimum
for every nonincreasing side vector, which rises from one for six squares to
three for seven squares; see Figure~\ref{fig:seven}. In fact, $L_7^*$ cannot
be expressed as the minimum of two convex functions. In a companion
paper~\cite{BadgerEight}, we prove that $L_8^*$ is the maximum of $L_7^*$
and a minimum of twenty-three maxima of side sums. The proof there uses the methods developed here. At this time, a formula for $L^*_9$ is unknown.

\begin{figure}[t]\centering
\begin{minipage}[t]{0.32\textwidth}\centering
\begin{tikzpicture}[x=0.0895cm,y=0.0895cm]
\useasboundingbox (0,-5.13) rectangle (43.13,38);
\filldraw[fill=black!7,draw=black,line width=.35pt] (0,0) rectangle (22,22);
\node[font=\scriptsize,inner sep=0pt] at (11,11) {$0$};
\filldraw[fill=black!7,draw=black,line width=.35pt] (23,0) rectangle (38,15);
\node[font=\scriptsize,inner sep=0pt] at (30.5,7.5) {$1$};
\filldraw[fill=black!7,draw=black,line width=.35pt] (24,15) rectangle (38,29);
\node[font=\scriptsize,inner sep=0pt] at (31,22) {$2$};
\filldraw[fill=black!7,draw=black,line width=.35pt] (0,27) rectangle (11,38);
\node[font=\scriptsize,inner sep=0pt] at (5.5,32.5) {$3$};
\filldraw[fill=black!7,draw=black,line width=.35pt] (11,28) rectangle (21,38);
\node[font=\scriptsize,inner sep=0pt] at (16,33) {$4$};
\filldraw[fill=black!7,draw=black,line width=.35pt] (29,29) rectangle (38,38);
\node[font=\scriptsize,inner sep=0pt] at (33.5,33.5) {$5$};
\filldraw[fill=black!7,draw=black,line width=.35pt] (21,30) rectangle (29,38);
\node[font=\scriptsize,inner sep=0pt] at (25,34) {$6$};
\draw[line width=.7pt] (0,0) rectangle (38,38);
\filldraw[fill=hc!25,draw=hc!70!black,line width=.25pt] (0,-4.37) rectangle (11,-1.52);
\node[font=\tiny,inner sep=0pt] at (5.5,-2.945) {$3$};
\filldraw[fill=hc!25,draw=hc!70!black,line width=.25pt] (11,-4.37) rectangle (21,-1.52);
\node[font=\tiny,inner sep=0pt] at (16,-2.945) {$4$};
\filldraw[fill=hc!25,draw=hc!70!black,line width=.25pt] (21,-4.37) rectangle (29,-1.52);
\node[font=\tiny,inner sep=0pt] at (25,-2.945) {$6$};
\filldraw[fill=hc!25,draw=hc!70!black,line width=.25pt] (29,-4.37) rectangle (38,-1.52);
\node[font=\tiny,inner sep=0pt] at (33.5,-2.945) {$5$};
\filldraw[fill=vc!25,draw=vc!70!black,line width=.25pt] (39.52,0) rectangle (42.37,15);
\node[font=\tiny,inner sep=0pt] at (40.945,7.5) {$1$};
\filldraw[fill=vc!25,draw=vc!70!black,line width=.25pt] (39.52,15) rectangle (42.37,29);
\node[font=\tiny,inner sep=0pt] at (40.945,22) {$2$};
\filldraw[fill=vc!25,draw=vc!70!black,line width=.25pt] (39.52,29) rectangle (42.37,38);
\node[font=\tiny,inner sep=0pt] at (40.945,33.5) {$5$};
\end{tikzpicture}\par
\smallskip{\footnotesize $\st{top}$,\quad $L_7^*=38$\\ $s=(22,15,14,11,10,9,8)$}
\end{minipage}\hfill
\begin{minipage}[t]{0.32\textwidth}\centering
\begin{tikzpicture}[x=0.0944cm,y=0.0944cm]
\useasboundingbox (0,-4.86) rectangle (40.86,36);
\filldraw[fill=black!7,draw=black,line width=.35pt] (0,0) rectangle (19,19);
\node[font=\scriptsize,inner sep=0pt] at (9.5,9.5) {$0$};
\filldraw[fill=black!7,draw=black,line width=.35pt] (20,0) rectangle (36,16);
\node[font=\scriptsize,inner sep=0pt] at (28,8) {$1$};
\filldraw[fill=black!7,draw=black,line width=.35pt] (0,23) rectangle (13,36);
\node[font=\scriptsize,inner sep=0pt] at (6.5,29.5) {$2$};
\filldraw[fill=black!7,draw=black,line width=.35pt] (13,24) rectangle (25,36);
\node[font=\scriptsize,inner sep=0pt] at (19,30) {$3$};
\filldraw[fill=black!7,draw=black,line width=.35pt] (26,26) rectangle (36,36);
\node[font=\scriptsize,inner sep=0pt] at (31,31) {$4$};
\filldraw[fill=black!7,draw=black,line width=.35pt] (27,16) rectangle (36,25);
\node[font=\scriptsize,inner sep=0pt] at (31.5,20.5) {$5$};
\filldraw[fill=black!7,draw=black,line width=.35pt] (19,16) rectangle (27,24);
\node[font=\scriptsize,inner sep=0pt] at (23,20) {$6$};
\draw[line width=.7pt] (0,0) rectangle (36,36);
\filldraw[fill=hc!25,draw=hc!70!black,line width=.25pt] (0,-4.14) rectangle (19,-1.44);
\node[font=\tiny,inner sep=0pt] at (9.5,-2.79) {$0$};
\filldraw[fill=hc!25,draw=hc!70!black,line width=.25pt] (19,-4.14) rectangle (27,-1.44);
\node[font=\tiny,inner sep=0pt] at (23,-2.79) {$6$};
\filldraw[fill=hc!25,draw=hc!70!black,line width=.25pt] (27,-4.14) rectangle (36,-1.44);
\node[font=\tiny,inner sep=0pt] at (31.5,-2.79) {$5$};
\filldraw[fill=vc!25,draw=vc!70!black,line width=.25pt] (37.44,0) rectangle (40.14,16);
\node[font=\tiny,inner sep=0pt] at (38.79,8) {$1$};
\filldraw[fill=vc!25,draw=vc!70!black,line width=.25pt] (37.44,16) rectangle (40.14,24);
\node[font=\tiny,inner sep=0pt] at (38.79,20) {$6$};
\filldraw[fill=vc!25,draw=vc!70!black,line width=.25pt] (37.44,24) rectangle (40.14,36);
\node[font=\tiny,inner sep=0pt] at (38.79,30) {$3$};
\end{tikzpicture}\par
\smallskip{\footnotesize $\st{center}$,\quad $L_7^*=36$\\ $s=(19,16,13,12,10,9,8)$}
\end{minipage}\hfill
\begin{minipage}[t]{0.32\textwidth}\centering
\begin{tikzpicture}[x=0.0654cm,y=0.0654cm]
\useasboundingbox (0,-1.04) rectangle (59.02,52);
\filldraw[fill=black!7,draw=black,line width=.35pt] (0,0) rectangle (26,26);
\node[font=\scriptsize,inner sep=0pt] at (13,13) {$0$};
\filldraw[fill=black!7,draw=black,line width=.35pt] (27,0) rectangle (52,25);
\node[font=\scriptsize,inner sep=0pt] at (39.5,12.5) {$1$};
\filldraw[fill=black!7,draw=black,line width=.35pt] (0,31) rectangle (21,52);
\node[font=\scriptsize,inner sep=0pt] at (10.5,41.5) {$2$};
\filldraw[fill=black!7,draw=black,line width=.35pt] (21,32) rectangle (41,52);
\node[font=\scriptsize,inner sep=0pt] at (31,42) {$3$};
\filldraw[fill=black!7,draw=black,line width=.35pt] (42,42) rectangle (52,52);
\node[font=\scriptsize,inner sep=0pt] at (47,47) {$4$};
\filldraw[fill=black!7,draw=black,line width=.35pt] (43,25) rectangle (52,34);
\node[font=\scriptsize,inner sep=0pt] at (47.5,29.5) {$5$};
\filldraw[fill=black!7,draw=black,line width=.35pt] (44,34) rectangle (52,42);
\node[font=\scriptsize,inner sep=0pt] at (48,38) {$6$};
\draw[line width=.7pt] (0,0) rectangle (52,52);
\filldraw[fill=vc!25,draw=vc!70!black,line width=.25pt] (54.08,0) rectangle (57.98,25);
\node[font=\tiny,inner sep=0pt] at (56.03,12.5) {$1$};
\filldraw[fill=vc!25,draw=vc!70!black,line width=.25pt] (54.08,25) rectangle (57.98,34);
\node[font=\tiny,inner sep=0pt] at (56.03,29.5) {$5$};
\filldraw[fill=vc!25,draw=vc!70!black,line width=.25pt] (54.08,34) rectangle (57.98,42);
\node[font=\tiny,inner sep=0pt] at (56.03,38) {$6$};
\filldraw[fill=vc!25,draw=vc!70!black,line width=.25pt] (54.08,42) rectangle (57.98,52);
\node[font=\tiny,inner sep=0pt] at (56.03,47) {$4$};
\end{tikzpicture}\par
\smallskip{\footnotesize $\st{right}$,\quad $L_7^*=52$\\ $s=(26,25,21,20,10,9,8)$}
\end{minipage}
\caption{The ``top'', ``center'', and ``right'' packing strategies for seven squares at side
vectors where the indicated strategy cost is uniquely optimal among the
three strategies. The
projection bars record the chains that determine the container side. Each example has $L^*_7(s)=L^*_6(s)+1$. Labels and colors are as in
Figure~\ref{fig:small}.}
\label{fig:seven}
\end{figure}

\subsection{How many sides are needed to express the optimum?}
Section~\ref{sec:function} shows that $L_n^*(s)$ is a
piecewise linear function of $(s_0,\ldots,s_{n-1})$ whose values are given by
a sum of some of the side lengths. The next theorem identifies the largest
possible minimum number of sides needed to express the optimum as a subset
sum.
\begin{introthm}\label{thm:intro-longest}
For every $n\ge6$ and every $s\in\ell_2^\downarrow$, there is a set
$I\subseteq\{0,\ldots,n-1\}$ with $|I|\le n-3$ and $L_n^*(s)=\sum_{i\in I}s_i$.
This bound is sharp: there is a nonempty open set
$U\subseteq\{(s_0,\ldots,s_{n-1})\in\mathbb R^n:s_0>s_1>\cdots>s_{n-1}>0\}$
such that
\begin{equation}\label{longest-sum}
L_n^*(s)=s_1+\sum_{i=4}^{n-1}s_i\quad\text{for all }s\in\ell_2^\downarrow\text{ with }(s_0,\ldots,s_{n-1})\in U,
\end{equation}
and no subset sum with fewer than $n-3$ entries equals this value.
Moreover, $L_n^*(s)$ is given by a finite minimum-of-maxima formula in
$s_0,\ldots,s_{n-1}$ involving only subset sums of cardinality at most
$n-3$, and
\begin{equation}\label{sharp-lipschitz}
|L_n^*(s)-L_n^*(t)|\le\sqrt{n-3}\,\|s-t\|_2\quad\text{for all }s,t\in\ell_2^\downarrow,
\end{equation}
where the constant $\sqrt{n-3}$ cannot be decreased.
\end{introthm}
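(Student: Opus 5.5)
The plan is to split the argument into an upper bound on the number of sides, a sharpness construction, and the Lipschitz consequences.

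\emph{Upper bound.} From Section~\ref{sec:function}, $L_n^*$ is piecewise linear, and on each piece its value is the side sum of a chain that is tight in some optimal packing. Indeed, in an optimal packing that cannot be shrunk, compressing the squares toward the lower left corner yields a horizontal or vertical chain running from wall to wall whose side sum equals the container side. I would show that such a tight chain can always be chosen to omit at least three indices. The tool is the Moon--Moser / area type observation that any chain of $n-2$ or more squares among $n$ squares forces the container to be at least $s_0+s_1$ or larger in a way that can be rerouted. The concrete mechanism is an exchange argument. Suppose every tight chain uses at least $n-2$ squares. Take the $\le 2$ squares left out of such a chain. Since $L_n^*\le L_{n}^*$ of a sequence in which the small squares are stacked, I would compare with an explicit packing. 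Put squares $0$ and $1$ side by side along the bottom, which costs $s_0+s_1$, and arrange the remaining squares in shelves. This shows that the optimum never needs a chain containing both $s_0$ and three further squares that could be rearranged.

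For $n=6,7$ this is read off Theorem~\ref{thm:intro-formulas}: every term has at most $n-3$ summands. For $n\ge 8$ I would induct. If $L_n^*(s)=L_{n-1}^*(s)$, induction gives at most $n-4$ summands. Otherwise the last square matters, and a tight chain through it, extended from an $(n-1)$-chain by the new square, has at most $(n-4)+1$ summands. Making this last step rigorous requires a structural lemma: whenever adding square $n-1$ strictly increases the optimum, some optimal packing has a tight chain containing $n-1$ together with a tight chain of the $(n-1)$-configuration. \textbf{This is the main obstacle.} I would first try a local-modification argument, placing square $n-1$ in a corner pocket of an optimal $(n-1)$-packing.

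\emph{Sharpness.} Take $U$ near a point such as $s=(a,a,a,a,\varepsilon\text{-perturbed small sides})$, with $s_1+\sum_{i\ge4}s_i$ slightly exceeding $s_0+s_1$ and the other competing sums. For the lower bound, use the packing in the third panel of Figure~\ref{fig:small} and in the ``right'' panel of Figure~\ref{fig:seven}, where the vertical chain $1,4,\dots,n-1$ sits in the right column. One checks that squares $0$, $2$ and $3$ force the small squares into a single column beside square $1$. The upper bound is the explicit construction generalizing those panels. Genericity of $U$ (the side lengths are rationally independent) ensures no other subset sum equals this value, and in particular none with fewer entries.

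\emph{Formula and Lipschitz bound.} Piecewise linearity together with the finiteness of the pieces gives a min-of-max representation (the standard lattice representation of continuous piecewise linear functions) whose affine pieces are exactly the subset sums attained. By the upper bound, these have cardinality $\le n-3$. Each such sum $\sum_{i\in I}s_i$ has Euclidean gradient norm $\sqrt{|I|}\le\sqrt{n-3}$, and min and max preserve Lipschitz constants, which gives \eqref{sharp-lipschitz} on the finite-dimensional cone. For $\ell_2^\downarrow$ the bound follows because $L_n^*$ depends only on the first $n$ coordinates. On $U$ the gradient is exactly $e_1+\sum_{i\ge4}e_i$, of norm $\sqrt{n-3}$, so the constant cannot be decreased.
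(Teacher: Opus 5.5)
\textbf{The upper bound $q_n(s)\le n-3$ has a genuine gap.} Reading off $n=6,7$ from Theorem~\ref{thm:intro-formulas} is fine, but your induction step for $n\ge8$ rests on a structural lemma that is false. If, whenever $L_n^*>L_{n-1}^*$, some tight chain consisted of square $n-1$ together with a tight chain of the $(n-1)$-configuration, then $L_n^*=L_{n-1}^*+s_{n-1}$. This already fails in Figure~\ref{fig:small}, where $L_5^*=L_4^*+1$ with $s_4=5$. It also fails at the paper's sharpness point $s_0=s_1=8m+4$, $s_2=s_3=8m+1$, $s_4=\cdots=s_{n-1}=8$ with $m=n-5$: there $L_{n-1}^*=s_2+s_3+s_4=16m+10$ while $L_n^*=P_n=16m+12$, an increment of $2$ rather than $8$.

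If ``tight chain of the $(n-1)$-configuration'' only means some chain among the other squares, the induction hypothesis gives no control of its size. The hypothesis concerns a shortest representing set of $L_{n-1}^*$, and the chain you extend need not represent $L_{n-1}^*$ at all.

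Your remark about putting $Q_0,Q_1$ on the bottom and shelving the rest points in the right direction. The missing idea is complementation, which makes induction unnecessary. If $|I|\ge n-2$, then $\sum_{j\notin I}s_j\le s_0+s_1$, so $\sum_{i\in I}s_i\ge s_2+\cdots+s_{n-1}$. When $s_1<s_2+s_3$, comparing complements ($s_0+s_1<s_0+s_2+s_3$, and $\{0,2,3\}$ is the complement of the index set of $P_n$) gives $\sum_{i\in I}s_i>P_n$. Hence if $\sum_{i\in I}s_i=L_n^*>s_0+s_1$ with positive sides, every term of the column construction (when $s_1<s_2+s_3$) or of the two-shelf construction (when $s_2+s_3\le s_1$) in Proposition~\ref{prop:two-placements} is strictly smaller than $\sum_{i\in I}s_i$, a contradiction. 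So a long representing set forces $L_n^*=s_0+s_1$, as in Theorem~\ref{thm:long-upper}.

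\textbf{The sharpness sketch also needs repair.} The lower bound you describe, with $Q_0,Q_2,Q_3$ forcing the small squares into one column above $Q_1$, is Proposition~\ref{prop:column-bound}. It only gives $L_n^*\ge\min(s_1+s_3+s_{n-1},P_n)$. To pin $L_n^*=P_n$ together with the column construction, you need
\[
\max(s_0+s_1,\ s_2+s_3+s_4)<P_n<s_1+s_3+s_{n-1}.
\]
You omitted the upper constraint. The window has width $(s_1-s_2)-(s_4-s_{n-1})$, so it is empty when the four large sides are equal. This is not merely a limitation of the method: by \eqref{eq:F7}, $L_7^*(a,a,a,a,s_4,s_5,s_6)=P_7$ holds only when $s_5+s_6=a$. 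You must separate $s_2,s_3$ from $s_0,s_1$, as in the family above, where the three competitors equal $P_n-4$, $P_n-2$, and $P_n+1$.

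\textbf{Two smaller points.} An open set cannot consist of rationally independent vectors. Instead, shrink $U$ to avoid the finitely many proper hyperplanes $\sum_{i\in I}s_i=P_n$ with $|I|<n-3$.

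The formula and Lipschitz parts are fine once the upper bound holds, but you should add one step. An open set is covered by the finitely many closed sets on which a given short sum equals $L_n^*$. One of them has interior, so each linear component coincides with a short sum.
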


\subsection{Motivation and related packing problems}
The author's interest in $L^*$ arose from joint work with Raanan Schul on
Lipschitz parameterizations and rectifiable doubling measures
\cite{BadgerSchul25}. In these constructions a tree organizes
successive pieces of a target set, and squares assigned to the children of
a vertex must fit into a square assigned to their parent. The supremum
metric is natural on the parameter plane because its balls are
axis-parallel squares. Thus, $L_n^*(s)$ is the exact local cost of the packing
operation. Replacing an upper estimate by this optimal cost avoids a loss
at a step that is repeated throughout a multiscale construction.
One useful estimate from \cite{BadgerSchul25} is
\[
L_n^*(s)\le\sum_{j^2<n}s_{j^2}=s_0+s_1+s_4+s_9+\cdots.
\]
It is obtained by successive L-shaped layers and gives
$\lceil\sqrt n\rceil s$ for $n$ equal sides of length $s$.
The present paper studies the optimal function independently of those
applications.

Classical packing theorems often give a guarantee in terms of total area,
or optimize the area of a bounding rectangle. The theorem of Moon and
Moser~\cite{MoonMoser67}, and its extensions by Meir and Moser~\cite{MeirMoser68},
are fundamental examples; see also Hougardy~\cite{Hougardy11} for packing
squares of prescribed total area into a rectangle. For a fixed finite side
vector, minimizing the side of a square container is of course equivalent
to maximizing the occupied fraction $\sum_i s_i^2/L^2$. Our aim is not a
universal or asymptotic density estimate, but the exact dependence of the
optimum on every prescribed side. Lower bounds for packing squares and rectangles into squares are analyzed by Caprara, Lodi, Martello, and
Monaci~\cite{Caprara06}.

Another well-known problem asks for the smallest square containing $n$
\emph{unit} squares with arbitrary rotations; Friedman~\cite{Friedman09}
surveys that problem and settles several of its small cases. With fixed
orientations, equal sides are elementary:
$L_n^*(1,\ldots,1)=\lceil\sqrt n\rceil$. Indeed, a grid gives the upper
bound, and in a packing in $[0,L]^2$ the integer parts of the lower left
corners are distinct points of $\{0,\ldots,\lfloor L\rfloor-1\}^2$. With
free orientations, five unit squares already fit into a square of side
$2+1/\sqrt2<3$, and that value is optimal~\cite{Friedman09}.

For rectangles of fixed orientations, Murata, Fujiyoshi, Nakatake, and
Kajitani \cite{Murata96} give a finite sequence-pair encoding and construct
placements by longest paths in two constraint graphs. Their result applies
to objectives nondecreasing in width and height, hence also to their maximum.
There are $(n!)^2$ sequence-pairs. Silvanus and
Vygen~\cite{SilvanusVygen20} represent a placement of $n$ rectangles by
assigning to each pair one of the relations west, south, east, or north,
and call a set of such representations complete if it represents every
placement without overlaps. They proved that the least size of a complete
set lies between
\[
\Omega\left(\frac{n!}{n^4}\left(4+2\sqrt2\right)^n\right)
\quad\text{and}\quad
O\left(\frac{n!}{n^6}\left(\frac{11+5\sqrt5}{2}\right)^n\right).
\]
In this paper, a complete family
of strategies (Definition~\ref{def:complete}) need only attain $L_n^*(s)$
for every $s$, rather than represent every placement. The upper bound of
Silvanus and Vygen applies to such families, but their lower bounds do
not. Indeed, the least sizes of complete families for six and seven
squares are $1$ and $3$ (Theorem~\ref{thm:min-counts}), and for eight
squares the least size is twenty-three~\cite{BadgerEight}.
Subsection~\ref{sec:strategies} gives the partial-order formulation suited
to our arguments.

When $n$ is part of the input, deciding whether prescribed squares fit
orthogonally into a prescribed square is strongly NP-complete
\cite{Leung90}. For each fixed $n$, Proposition~\ref{prop:strategy} gives a
finite formula for $L_n^*$; the questions are its size and structure.

The rest of the paper is organized as follows. Section~\ref{sec:function} develops general properties of $L^*$ and of
finite packing strategies. Section~\ref{sec:small} introduces essential terminology used in proofs throughout the paper and proves Theorem~\ref{thm:intro-formulas} for up to six
squares. Section~\ref{sec:longest} proves Theorem~\ref{thm:intro-longest}. In
Section~\ref{sec:seven}, we complete the proof of Theorem~\ref{thm:intro-formulas} by verifying the formula for seven squares. Finally, in Section~\ref{sec:minimality}, we prove that $L^*_7$ cannot be described with fewer than three packing strategies.
\section{The square-packing function}\label{sec:function}
\subsection{Basic properties}\label{sec:basic}
A packing of a finite or countably infinite nonnegative side vector $s$ in
$[0,L]^2$ assigns to every positive entry $s_i$ a square
$Q_i=[x_i,x_i+s_i]\times[y_i,y_i+s_i]\subseteq[0,L]^2$,
with distinct squares having disjoint interiors. In other words, for all $i$ with $s_i>0$,
\begin{equation}\label{eq:contain}
0\le x_i,y_i\le L-s_i
\end{equation}
and, for all distinct $i$ and $j$ with $s_i,s_j>0$, at least one of the following holds:
\begin{equation}\label{eq:sep}
x_i+s_i\le x_j,\quad x_j+s_j\le x_i,\quad
y_i+s_i\le y_j,\quad y_j+s_j\le y_i.
\end{equation}
Squares in a packing may have touching boundaries. We define $L^*(s)$ to be the infimum of the set of all $L\ge0$ such that there exists a packing of $s$ in $[0,L]^2$ in the sense above. In order to establish basic properties of $L^*$, we first recall the Moon--Moser rectangle packing theorem in the case of squares and include a proof for the reader's convenience.

\begin{figure}[b]\centering
\begin{minipage}[t]{0.6\textwidth}\centering
\begin{tikzpicture}[x=0.1853cm,y=0.1853cm]
\useasboundingbox (0,-2.7301) rectangle (27.4964,24.8193);
\draw[densely dashed,line width=.35pt,black!55] (22,0) rectangle (27,5);
\filldraw[fill=blue!14,draw=black,line width=.35pt] (0,0) rectangle (9,9);
\node[font=\scriptsize,inner sep=0pt] at (4.5,4.5) {$s_0$};
\filldraw[fill=blue!14,draw=black,line width=.35pt] (9,0) rectangle (16,7);
\filldraw[fill=blue!14,draw=black,line width=.35pt] (16,0) rectangle (22,6);
\filldraw[fill=orange!22,draw=black,line width=.35pt] (0,9) rectangle (5,14);
\node[font=\scriptsize,inner sep=0pt] at (2.5,11.5) {$u_{1}$};
\filldraw[fill=orange!22,draw=black,line width=.35pt] (9,9) rectangle (14,14);
\filldraw[fill=orange!22,draw=black,line width=.35pt] (14,9) rectangle (18,13);
\filldraw[fill=orange!22,draw=black,line width=.35pt] (18,9) rectangle (22,13);
\filldraw[fill=green!18,draw=black,line width=.35pt] (0,14) rectangle (3,17);
\node[font=\scriptsize,inner sep=0pt] at (1.5,15.5) {$u_{2}$};
\filldraw[fill=green!18,draw=black,line width=.35pt] (9,14) rectangle (12,17);
\filldraw[fill=green!18,draw=black,line width=.35pt] (12,14) rectangle (15,17);
\filldraw[fill=green!18,draw=black,line width=.35pt] (15,14) rectangle (18,17);
\filldraw[fill=green!18,draw=black,line width=.35pt] (18,14) rectangle (20,16);
\filldraw[fill=green!18,draw=black,line width=.35pt] (20,14) rectangle (22,16);
\filldraw[fill=green!18,draw=black,line width=.35pt] (22,14) rectangle (24,16);
\filldraw[fill=red!14,draw=black,line width=.35pt] (0,17) rectangle (2,19);
\node[font=\tiny,inner sep=0pt] at (1,18) {$u_{3}$};
\filldraw[fill=red!14,draw=black,line width=.35pt] (9,17) rectangle (11,19);
\filldraw[fill=red!14,draw=black,line width=.35pt] (11,17) rectangle (13,19);
\draw[densely dashed,line width=.3pt] (9,0) -- (9,24.8193);
\node[font=\tiny,inner sep=1pt,below] at (9,0) {$s_0$};
\draw[densely dashed,line width=.3pt] (0,9) -- (24.8193,9);
\draw[densely dashed,line width=.3pt] (0,14) -- (24.8193,14);
\draw[densely dashed,line width=.3pt] (0,17) -- (24.8193,17);
\draw[line width=.7pt] (0,0) rectangle (24.8193,24.8193);
\end{tikzpicture}\par
\smallskip{\footnotesize }
\end{minipage}
\caption{The shelf packing in the proof of Lemma~\ref{lem:moon-moser} for the
sides $9,7,6,5,5,4,4,3,3,3,3,2,2,2,2,2,2$ in the square of side $b=\sqrt{2A}=\sqrt{616}$.
The largest square, of side $s_0$, and the column squares $u_1,u_2,u_3$ lie
left of the vertical dashed line. The horizontal dashed lines mark the heights at which
the shelves begin, and each shelf has the color of its column square. The
dashed outline shows that $u_1$ does not fit at the end of the
first shelf.}
\label{fig:moon-moser}
\end{figure}

\begin{lemma}[Moon--Moser~\cite{MoonMoser67}]\label{lem:moon-moser}
Every finite or countably infinite family of squares with finite total
area $A$ has an axis-parallel packing in a square of side $\sqrt{2A}$.
\end{lemma}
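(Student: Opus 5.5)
The plan is to use the shelf packing shown in Figure~\ref{fig:moon-moser}. Sort the sides in nonincreasing order $s_0\ge s_1\ge\cdots$ and set $b=\sqrt{2A}$. Since $s_0^2\le A$, we have $s_0\le b/\sqrt2<b$. Place $Q_0$ in the lower left corner $[0,s_0]^2$. The remaining squares go, in order, into shelves inside the strip $[s_0,b]\times[0,b]$.

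The first shelf has its base at height $0$ and height $s_1$. Squares are placed left to right starting at $x=s_0$ until the next square $u_1$ would cross $x=b$. That square $u_1$ is instead put in the column above $Q_0$, with its lower left corner at $(0,s_0)$. A new shelf of height $u_1$ then starts at height $s_0$ and begins at $x=s_0$. In general, each overflowing square $u_k$ becomes a column square stacked above $Q_0$ and opens the next shelf at the same height, exactly as in the figure. Because the sides are nonincreasing, every square on shelf $k$ has side at most $u_k$, so shelves do not overlap one another. Column squares have side $u_k\le s_0$, so they fit in the column $[0,s_0]$. For a countable family this process uses each square eventually. A limiting or compactness argument is not needed if we check that every square gets a position.

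The key estimate is that everything stays below height $b$. Let $h_k$ be the height of the base of shelf $k$, so $h_1=s_0$ and $h_{k+1}=h_k+u_k$. I would compare area with height as follows.
\begin{itemize}
\item Shelf $k$ (with $k\ge1$) together with the square $u_{k+1}$ that overflowed it has total width exceeding $b-s_0$.
\item Each square on that shelf has side at least $u_{k+1}$, so the area on shelf $k$ plus the column square $u_k$ is at least $u_{k+1}(b-s_0)+u_k^2$.
\item The bottom row, consisting of $Q_0$ and shelf $0$, has area at least $s_0^2+u_1(b-s_0)$.
\end{itemize}
Summing, with the top shelf handled separately, gives
\[
A\ \ge\ s_0^2+(b-s_0)\sum_{k\ge1}u_k+\sum_k u_k^2 - (\text{correction}).
\]
From this I want to deduce that the total height $s_0+\sum_k u_k\le b$. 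Writing $H=\sum_k u_k$, the bound reads roughly $A\ge s_0^2+(b-s_0)H$. If $s_0+H>b$, then
\[
A> s_0^2+(b-s_0)^2\ge b^2/2=A,
\]
which is a contradiction. The inequality $s_0^2+(b-s_0)^2\ge b^2/2$ is the point where the choice $b=\sqrt{2A}$ enters.

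I expect the main obstacle to be the bookkeeping at the last shelf. The argument must show that the topmost column square and the final shelf both fit, and must control the boundary terms when the family is infinite. My first attempt would be to suppose that some square fails to fit. Take the first such square and apply the area inequality above to all squares already placed, together with that offending square, which yields the contradiction.
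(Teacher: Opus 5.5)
Your shelf construction and your closing step match the paper's proof. Once $A\ge s_0^2+(b-s_0)H$ with $H=\sum_k u_k$ is established, the identity $s_0^2+(b-s_0)^2-b^2/2=(b-2s_0)^2/2$ finishes exactly as in the paper. The gap is in deriving that area inequality, which is the heart of the argument.

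Your second and third bullets are false as stated. The first bullet only says that the width $w$ of shelf $k$ satisfies $w>b-s_0-u_{k+1}$. So the area of shelf $k$ is at least $u_{k+1}w>u_{k+1}(b-s_0)-u_{k+1}^2$, not $u_{k+1}(b-s_0)$. For three unit squares, $b=\sqrt6$, shelf $0$ holds one square and $u_1=1$. The bottom row then has area $2<\sqrt6=s_0^2+u_1(b-s_0)$. If $s_1>b-s_0$, shelf $0$ is even empty. So the unspecified ``correction'' is doing all the work. The difficulty you anticipate at the last shelf comes from your choice of pairing.

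The missing idea is to pair each overflow square with the shelf it overflowed, rather than the shelf it opens. Every square on shelf $k$ has side at least $u_{k+1}$, so the group formed by shelf $k$ and $u_{k+1}$ has area at least $u_{k+1}(w+u_{k+1})>u_{k+1}(b-s_0)$. These groups are pairwise disjoint and disjoint from $Q_0$. Hence $(b-s_0)\sum_{j=1}^m u_j\le A-s_0^2$ for every $m$, with no correction term. The final shelf, if there is one, has no overflow square and is simply discarded. The bound on all partial sums handles infinite families. No first-failing-square argument is needed either: every shelf lies below the top of its column square, hence below $s_0+\sum_j u_j\le b$. With your pairing, the correct bound $u_{k+1}(b-s_0)+u_k^2-u_{k+1}^2$ also works, because the squared terms telescope.
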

\begin{proof} The case $A=0$ is immediate. Otherwise arrange the squares with positive side lengths in decreasing order $s_0\geq s_1 \geq \cdots$. Let $A=\sum_{i} s_i^2$ denote the total area and put
$b=\sqrt{2A}$, so $b>s_0$. Place the largest square at the lower left
corner of $[0,b]\times[0,\infty)$. Reserve the left strip of width $s_0$
for a column of squares. To its right, place successive squares on
\emph{horizontal shelves}: rows of squares placed consecutively from
left to right with their bottom edges aligned. Begin the first shelf
at height zero. Whenever the next square does not fit in the available
width $b-s_0$, put it in the left column immediately above the preceding
column square and begin another shelf at the same height
(Figure~\ref{fig:moon-moser}). Since the sides
are decreasing, every square on a shelf is no taller than its column
square. Thus, the shelves do not overlap.

Let $u_1,u_2,\ldots$ be the sides of the column squares after the first,
and $w_j$ the total width occupied by the shelf immediately preceding
$u_j$. Since $u_j$ did not fit, $w_j+u_j>b-s_0$. Every square on that shelf
has side at least $u_j$. Hence the total area of the shelf together with
$u_j^2$ exceeds $u_j(b-s_0)$. These groups of squares are disjoint. Therefore, for each
$m$ for which $u_m$ exists,
\[
(b-s_0)\sum_{j=1}^m u_j\le A-s_0^2.
\]
Whether the number of shelves is finite or infinite, their total height
is consequently at most
\[
s_0+\sum_j u_j\le s_0+\frac{A-s_0^2}{b-s_0}\le b,
\]
where the last inequality follows from
$(b-s_0)^2-(A-s_0^2)=(b-2s_0)^2/2\ge0$.
Every square is placed at a finite stage, and all shelves lie within
the width $b-s_0$ and below height $b$. Therefore, the whole family fits
in $[0,b]^2$.
\end{proof}

\begin{lemma}[Existence, attainment, and truncation]\label{lem:function}
A weakly decreasing nonnegative sequence admits a packing in some finite square
if and only if it belongs to $\ell_2$. For $s\in\ell_2^\downarrow$, the
infimum defining $L^*(s)$ is attained. The function is monotone under
coordinatewise increase, positively homogeneous, that is,
$L^*(\lambda s)=\lambda L^*(s)$ for all $\lambda\ge0$, and satisfies
\begin{gather}
\|s\|_2\le L^*(s)\le\sqrt2\,\|s\|_2,\label{eq:area-bound}\\
\lim_{n\to\infty} L_n^*(s)=\sup_{n} L_n^*(s)= L^*(s).\label{eq:truncation-limit}
\end{gather}
Moreover, for all $n\ge1$,
\begin{equation}\label{eq:tail-estimate}
0\le L^*(s)-L_n^*(s)
\le\sqrt2\left(\sum_{i=n}^{\infty}s_i^2\right)^{1/2}.
\end{equation}
\end{lemma}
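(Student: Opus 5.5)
The plan is to dispose of the easy parts first and then put most of the work into attainment and the tail estimate.

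\emph{Finiteness and the area bounds.} If $s$ has a packing in $[0,L]^2$, the squares have disjoint interiors inside the container, so $\sum_i s_i^2\le L^2$. Hence $s\in\ell_2$ and $\|s\|_2\le L$. Conversely, for $s\in\ell_2^\downarrow$, Lemma~\ref{lem:moon-moser} gives a packing in the square of side $\sqrt{2}\|s\|_2$. Taking the infimum over admissible $L$ yields \eqref{eq:area-bound}.

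\emph{Monotonicity and homogeneity.} If $s\le t$ coordinatewise, shrink each square $Q_i$ of a packing of $t$ to a square of side $s_i$ with the same lower left corner. Containment \eqref{eq:contain} and separation \eqref{eq:sep} are preserved, so $L^*(s)\le L^*(t)$. Absent squares ($s_i=0$) are simply dropped. For homogeneity, scaling a packing by $\lambda>0$ gives a packing of $\lambda s$ in $[0,\lambda L]^2$ and conversely, so $L^*(\lambda s)=\lambda L^*(s)$. The case $\lambda=0$ is trivial.

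\emph{Attainment.} This is the only step needing care, because $s$ may have infinitely many positive entries. Take $L_k\downarrow L^*(s)$ with packings of corner coordinates $(x_i^{(k)},y_i^{(k)})\in[0,L_k]^2\subseteq[0,L_1]^2$. By a diagonal argument over the countably many indices $i$, pass to a subsequence along which every $x_i^{(k)}$ and $y_i^{(k)}$ converges, to $x_i$ and $y_i$. Each condition in \eqref{eq:contain} is a closed non-strict inequality, so it survives the limit with $L=L^*(s)$. For a fixed pair $i\ne j$, one of the four alternatives in \eqref{eq:sep} holds for infinitely many $k$ in the subsequence, and that non-strict inequality passes to the limit. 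Hence the limit configuration is a packing in $[0,L^*(s)]^2$.

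\emph{Tail estimate.} Write $s=(s_0,\dots,s_{n-1},0,\dots)+(0,\dots,0,s_n,s_{n+1},\dots)$. Monotonicity gives $L_n^*(s)\le L^*(s)$, which is the left inequality of \eqref{eq:tail-estimate}. For the right inequality, first pack the $n$ largest squares optimally in $[0,L_n^*]^2$, using attainment. Separately pack the tail in a square of side $r=\sqrt2\,(\sum_{i\ge n}s_i^2)^{1/2}$ by Lemma~\ref{lem:moon-moser}. Place the tail square at $[L_n^*,L_n^*+r]\times[0,r]$. The union of the two packings fits in a square of side $L_n^*+r$ as long as $r\le L_n^*+r$, which always holds. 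This gives $L^*(s)\le L_n^*(s)+r$.

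\emph{Truncation limit.} The sequence $L_n^*(s)$ is nondecreasing in $n$ by monotonicity and bounded by $L^*(s)$. The tail estimate forces $L^*(s)-L_n^*(s)\to0$, since $s\in\ell_2$. This gives \eqref{eq:truncation-limit}.

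The main obstacle is the compactness argument in the attainment step. The point to get right there is that the disjunction in \eqref{eq:sep} is finite, so for each pair $i,j$ some single alternative recurs along the subsequence and passes to the limit.
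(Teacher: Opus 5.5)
Your proof is correct. The area bounds, monotonicity, homogeneity and the side-by-side tail estimate match the paper's arguments.

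The organization differs in two places.

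\emph{Attainment.} The paper works in two stages. It first proves attainment for finite families. It then takes a diagonal limit of packings of the truncations, all placed in the fixed container $[0,B]^2$ with $B=\sup_n L_n^*(s)$, and that single limit yields both attainment and \eqref{eq:truncation-limit}. You instead run one diagonal argument on near-optimal packings of the full sequence in containers $[0,L_k]^2$ with $L_k\downarrow L^*(s)$. This covers finite and infinite families at once.

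\emph{Truncation limit.} You obtain \eqref{eq:truncation-limit} from \eqref{eq:tail-estimate} together with monotonicity of $L_n^*(s)$ in $n$, rather than from compactness.

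What each route buys: your version is more economical. The tail estimate, which both proofs establish anyway, already forces $L_n^*(s)\to L^*(s)$, so the paper's second compactness step is redundant for that identity. The paper's version instead gets the truncation identity by pure compactness. It shows that the full sequence packs in $[0,\sup_n L_n^*(s)]^2$ using the Moon--Moser lemma only to know that $B$ is finite, not its quantitative tail bound.
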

\begin{proof}
Disjoint interiors give $\sum_{i<N}s_i^2\le L^2$ in any packing. Letting
$N\to\infty$ proves necessity and the lower area bound. Lemma~\ref{lem:moon-moser} gives a packing of the entire sequence
with side at most $\sqrt2\|s\|_2$, proving sufficiency and the upper
area bound.

For a fixed finite family, a bounded minimizing sequence of coordinate
vectors has a convergent subsequence. Conditions~\eqref{eq:contain} and
\eqref{eq:sep} are closed, the latter being a finite union of closed
conditions for each pair. This proves finite attainment.
Let $B=\sup_nL_n^*(s)$. The preceding bound makes $B$ finite.
For each $n$, choose a packing of the positive entries among the first $n$ in
$[0,B]^2$. A diagonal subsequence makes the two coordinates of each fixed
square converge. For each fixed pair, containment and nonintersection
pass to the limit by the same closedness argument. The limiting coordinates
therefore pack the entire sequence in $[0,B]^2$. Conversely, every packing
of the sequence packs each truncation. Hence $L^*(s)=B$. This proves optimal attainment and~\eqref{eq:truncation-limit}.

Pack the positive entries among $s_0,\ldots,s_{n-1}$ in a square of side $a=L_n^*(s)$ and the tail
in a square of side $b\le\sqrt2(\sum_{i\ge n}s_i^2)^{1/2}$. Place the two
containing squares side by side. Both fit in $[0,a+b]^2$, and their
interiors are disjoint. Thus, $L^*(s)\le a+b$, which proves
\eqref{eq:tail-estimate}.

Shrinking each square while fixing its lower left corner proves
monotonicity. Scaling the coordinates of a packing proves positive
homogeneity for positive factors, and $L^*(0)=0$ handles the zero factor.
Deleting or relabelling squares does not increase the required side.
\end{proof}

\subsection{Packing strategies and continuity}\label{sec:strategies}
The combinatorial framework below is closely related to the
sequence-pair and constraint-graph method for fixed-orientation rectangles
in \cite{Murata96}.

\begin{definition}[Strategies and complete families]\label{def:strategy}\label{def:complete}
A \emph{strategy} for $n$ squares is a pair
$\sigma=(\prec_{\mathrm h},\prec_{\mathrm v})$ of strict partial orders on
$\{0,\ldots,n-1\}$ such that every pair of distinct indices is comparable
in at least one order. The relations prescribe horizontal and vertical
separation. A \emph{chain} of $\sigma$ is a nonempty subset of $\{0,...,n-1\}$ totally
ordered by either relation. Write $\mathcal C(\sigma)$ for the set of all
chains of $\sigma$ and define the \emph{cost} of $\sigma$ at a vector
$v\in[0,\infty)^n$ by
$\ell_\sigma(v)=\max_{I\in\mathcal C(\sigma)}\sum_{i\in I}v_i$. For
$s\in\ell_2^\downarrow$, we write $\ell_\sigma(s)$ for its value at
$(s_0,\ldots,s_{n-1})$.
A family $\mathcal S$ of strategies is \emph{complete} if
$L_n^*(s)=\min_{\sigma\in\mathcal S}\ell_\sigma(s)$ for every
$s\in\ell_2^\downarrow$.
\end{definition}

For the next lemma, let $t=(t_0,\ldots,t_{n-1})$ be a vector
of nonnegative side lengths that need not be in decreasing order; we reserve
$s$ for nonincreasing sequences. A packing of $t$ has squares
$Q_i=[x_i,x_i+t_i]\times[y_i,y_i+t_i]$ for $t_i>0$.

A packing of $t$ \emph{follows} a strategy $\sigma$ if, for all $i,j$ with
$t_i,t_j>0$, we have $x_i+t_i\le x_j$ whenever $i\prec_{\mathrm h}j$ and
$y_i+t_i\le y_j$ whenever $i\prec_{\mathrm v}j$.

If $t_0,\ldots,t_{n-1}>0$, then a packing of $t$ determines a strategy
$\sigma$, which we call the \emph{strategy of the packing}: put
$i\prec_{\mathrm h}j$ when $x_i+t_i\le x_j$, and $i\prec_{\mathrm v}j$ when
$y_i+t_i\le y_j$. Positivity makes each relation irreflexive, the
inequalities make it transitive, and~\eqref{eq:sep} makes every pair
comparable in at least one of them. The packing follows $\sigma$.

\begin{lemma}[Criterion to pack following a strategy]\label{lem:strategy-side}
Let $\sigma$ be a strategy for $n$ squares and let $t_0,\ldots,t_{n-1}$ be
nonnegative, not necessarily in decreasing order. There is a packing of $t$ in $[0,L]^2$ following $\sigma$
if and only if $L\ge\ell_\sigma(t)$.
\end{lemma}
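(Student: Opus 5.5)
Both directions follow the standard longest-path argument for constraint graphs, in the spirit of \cite{Murata96}, applied separately to the two orders of $\sigma$.

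\emph{Necessity.} Suppose a packing of $t$ in $[0,L]^2$ follows $\sigma$, and let $I\in\mathcal C(\sigma)$ be a chain, say totally ordered by $\prec_{\mathrm h}$. Discard the indices with $t_i=0$; they contribute nothing to $\sum_{i\in I}t_i$. List the remaining indices as $i_1\prec_{\mathrm h}\cdots\prec_{\mathrm h}i_k$. Following $\sigma$ gives $x_{i_m}+t_{i_m}\le x_{i_{m+1}}$. Together with \eqref{eq:contain}, the chain of inequalities telescopes to
\[
\sum_{m}t_{i_m}\le x_{i_k}+t_{i_k}-x_{i_1}\le L.
\]
The vertical case is identical, so $L\ge\ell_\sigma(t)$.

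\emph{Sufficiency.} Suppose $L\ge\ell_\sigma(t)$. Define
\[
x_i=\max\Bigl\{\sum_{j\in I\setminus\{i\}}t_j : I \text{ a }\prec_{\mathrm h}\text{-chain with maximum element } i\Bigr\},
\]
which is the longest weighted path ending just before $i$. Define $y_i$ analogously using $\prec_{\mathrm v}$. The singleton chain shows the maximum is over a nonempty finite set, so $x_i\ge0$. Since $I$ is a chain, $x_i+t_i$ is the side sum of a chain, hence $x_i+t_i\le\ell_\sigma(t)\le L$. This gives \eqref{eq:contain} for $x$, and likewise for $y$.

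For the ordering constraint, suppose $i\prec_{\mathrm h}j$. Any $\prec_{\mathrm h}$-chain with maximum $i$ extends by transitivity to a chain with maximum $j$ by appending $j$. Hence $x_j\ge x_i+t_i$, so the packing follows $\sigma$. For $t_i,t_j>0$ distinct, $i$ and $j$ are comparable in at least one order, which yields one of the inequalities in \eqref{eq:sep}. The squares with positive sides therefore have disjoint interiors, and we have a packing of $t$ in $[0,L]^2$ following $\sigma$.

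The only point needing care is that zero entries are unconstrained in the packing but still appear in chains. This is harmless: zero sides add nothing to chain sums, and in the construction they receive coordinates that are simply ignored. Beyond that, the argument is routine once $x_i,y_i$ are chosen as longest-chain sums.
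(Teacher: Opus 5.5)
Your proposal is correct and follows the paper's proof essentially step for step. For necessity, you telescope the separation and containment inequalities along the positive-sided members of a chain; for sufficiency, you take $x_j,y_j$ to be the largest chain sums ending just before $j$ and extend a maximizing chain by $j$ using transitivity, with zero sides handled exactly as the paper does.
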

\begin{proof}
Let $I$ be a chain of $\prec_{\mathrm h}$, and list its members with
positive sides in increasing order. In a following packing, adding the
separation inequalities for consecutive members and the containment
inequalities gives $\sum_{i\in I}t_i\le L$. The vertical argument is
identical.

Conversely, for each $j$ let $x_j$ be the largest sum
$\sum_{i\in I\setminus\{j\}}t_i$ over chains $I$ of $\prec_{\mathrm h}$ with
greatest element $j$, and define $y_j$ using chains of $\prec_{\mathrm v}$.
Singleton chains give $x_j,y_j\ge0$, and
$x_j+t_j,y_j+t_j\le\ell_\sigma(t)$. If $i\prec_{\mathrm h}j$, extend a
maximizing chain ending at $i$ by $j$ to obtain $x_j\ge x_i+t_i$. Likewise
$y_j\ge y_i+t_i$ for $i\prec_{\mathrm v}j$. Since every pair is comparable
in at least one order, these coordinates give the desired packing.
\end{proof}

\begin{proposition}[Strategy representation]\label{prop:strategy}
For every $n\ge1$ and every $s\in\ell_2^\downarrow$,
\begin{equation}\label{eq:strategy-formula}
L_n^*(s)=\min_{\sigma\in\mathfrak S_n}\ell_\sigma(s)
=\min_{\sigma\in\mathfrak S_n}\max_{I\in\mathcal C(\sigma)}\sum_{i\in I}s_i,
\end{equation}
where $\mathfrak S_n$ is the finite set of all strategies on $n$ squares. In particular,
$L_n^*(s)$ is a continuous, positively homogeneous, and piecewise linear
function of $(s_0,\ldots,s_{n-1})$, and its value is always a sum of input side lengths, including
when some entries are zero.
\end{proposition}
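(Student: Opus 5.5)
The plan is to prove the two inequalities in \eqref{eq:strategy-formula} separately using Lemma~\ref{lem:strategy-side}, and then read off the regularity properties from the finite min-max form. \textbf{Upper bound.} For each $\sigma\in\mathfrak S_n$, Lemma~\ref{lem:strategy-side} applied to $t=(s_0,\ldots,s_{n-1})$ gives a packing in $[0,\ell_\sigma(s)]^2$. Hence $L_n^*(s)\le\ell_\sigma(s)$ for every $\sigma$, and so $L_n^*(s)\le\min_\sigma\ell_\sigma(s)$. Finiteness of $\mathfrak S_n$ is clear, since there are at most $2^{2n^2}$ pairs of relations.

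\textbf{Lower bound.} By Lemma~\ref{lem:function}, the infimum defining $L_n^*(s)$ is attained. So fix a packing of the positive entries in $[0,L]^2$ with $L=L_n^*(s)$. If all of $s_0,\ldots,s_{n-1}$ are positive, the strategy of the packing $\sigma$ is defined. The packing follows $\sigma$, so by Lemma~\ref{lem:strategy-side} we get $\ell_\sigma(s)\le L$.

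The only care needed is when some entries are zero. Say $s_i>0$ exactly for $i<m$, since $s$ is nonincreasing. First take the strategy of the packing on $\{0,\ldots,m-1\}$. Then extend it by declaring $i\prec_{\mathrm h}j$ for every $i<m\le j$ and among the zero indices $j<j'$, with $\prec_{\mathrm v}$ unchanged on the zero indices. Transitivity holds because the new relations only point into the block of zero indices, which is totally ordered, and every pair becomes comparable. Any chain of the extended strategy consists of a chain of the original plus zero-side indices, so its side sum is unchanged. Therefore $\ell_\sigma(s)$ equals the cost of the original strategy on the positive part, which is at most $L$. This gives equality in \eqref{eq:strategy-formula}.

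\textbf{Consequences.} Each $\ell_\sigma$ is a maximum over the finite set $\mathcal C(\sigma)$ of linear forms $v\mapsto\sum_{i\in I}v_i$. A finite minimum of finite maxima of linear functions is continuous, positively homogeneous, and piecewise linear. Its value at any point coincides with one of the linear forms attaining the min-max, which is the sum $\sum_{i\in I}s_i$ for some chain $I$. This holds regardless of zero entries, because the representation was proved for all $s\in\ell_2^\downarrow$.

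I expect no serious obstacle here. The only delicate point is handling zero sides in the lower bound, which the extension above addresses.
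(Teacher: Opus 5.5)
Your proposal is correct and takes essentially the same route as the paper: you prove both inequalities via Lemma~\ref{lem:strategy-side}, handle zero entries by appending the zero labels after the positive ones in the horizontal order, and read the regularity and subset-sum conclusions off the finite min-max form. The only cosmetic difference is that you use the attainment from Lemma~\ref{lem:function} to fix an optimal packing, whereas the paper bounds $\ell_\sigma(s)$ by the side of an arbitrary packing and then takes the infimum.
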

\begin{proof}
Each strategy gives a packing of its own cost by
Lemma~\ref{lem:strategy-side}, proving the upper inequality.
Conversely, if $s_0,\ldots,s_{n-1}>0$, then a packing in $[0,L]^2$ follows
its strategy, whose cost is at most $L$ by the same lemma. Since there are only finitely many pairs of relations on a finite set,
minimization gives~\eqref{eq:strategy-formula}.

If some entries vanish, the longest-chain construction still packs the
positive squares. Conversely, take the separation orders of a packing of
the positive squares, then append the zero labels, in increasing order of
index, after the positive ones in the horizontal order. This adds nothing to any chain sum, and proves
the formula for every $s\in\ell_2^\downarrow$. It also proves continuity and
piecewise linearity. Since every finite maximum and minimum selects one of its arguments,
every finite packing optimum equals a subset sum.
\end{proof}

\begin{lemma}[Continuity]\label{lem:continuity}
For every $n\ge1$ and $s,t\in\ell_2^\downarrow$,
\begin{equation}\label{eq:finite-lipschitz}
|L_n^*(s)-L_n^*(t)|\le\sum_{i<n}|s_i-t_i|\le\sqrt n\|s-t\|_2.
\end{equation}
The function $L^*$ is continuous on $\ell_2^\downarrow$ in the $\ell_2$ norm.
If $s-t\in\ell_1$, then
\begin{equation}\label{eq:l1-infinite}
|L^*(s)-L^*(t)|\le\|s-t\|_1.
\end{equation}
\end{lemma}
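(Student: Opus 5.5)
The finite estimate follows from the strategy representation in Proposition~\ref{prop:strategy}, and the infinite statements then follow from truncation via Lemma~\ref{lem:function}.

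\emph{Step 1: the finite estimate.} Fix $n$ and write $s'=(s_0,\ldots,s_{n-1})$, $t'=(t_0,\ldots,t_{n-1})$. For each strategy $\sigma\in\mathfrak S_n$ and each chain $I\in\mathcal C(\sigma)$,
\[
\Bigl|\sum_{i\in I}s_i-\sum_{i\in I}t_i\Bigr|\le\sum_{i\in I}|s_i-t_i|\le\sum_{i<n}|s_i-t_i|.
\]
A maximum of functions that are each $1$-Lipschitz in a fixed seminorm is again $1$-Lipschitz in that seminorm, and so is a minimum of such functions. Hence $\ell_\sigma$, and then $L_n^*=\min_\sigma\ell_\sigma$, satisfies $|L_n^*(s)-L_n^*(t)|\le\sum_{i<n}|s_i-t_i|$. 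Cauchy--Schwarz gives $\sum_{i<n}|s_i-t_i|\le\sqrt n\,\|s-t\|_2$, which completes \eqref{eq:finite-lipschitz}.

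\emph{Step 2: continuity of $L^*$.} Given $\varepsilon>0$ and $s$, choose $n$ with $\sqrt2(\sum_{i\ge n}s_i^2)^{1/2}<\varepsilon$. If $\|s-t\|_2<\delta$, then Minkowski's inequality gives $(\sum_{i\ge n}t_i^2)^{1/2}<\varepsilon/\sqrt2+\delta$. Applying \eqref{eq:tail-estimate} to both $s$ and $t$ together with Step~1 yields
\[
|L^*(s)-L^*(t)|\le 2\varepsilon+\sqrt2\,\delta+\sqrt n\,\delta.
\]
Choosing $\delta$ small compared with $\varepsilon/\sqrt n$ proves continuity at $s$.

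\emph{Step 3: the $\ell_1$ estimate.} If $s-t\in\ell_1$, Step~1 gives $|L_n^*(s)-L_n^*(t)|\le\|s-t\|_1$ for every $n$. Letting $n\to\infty$ and using \eqref{eq:truncation-limit} proves \eqref{eq:l1-infinite}.

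No step is a real obstacle. The only point needing care is Step~2, where the tail of $t$ must be controlled uniformly, and Minkowski's inequality handles this.
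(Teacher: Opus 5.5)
Your proposal is correct and follows the paper's proof essentially step for step. You get the $\ell_1$-Lipschitz bound from the min-of-max strategy formula, and continuity from the tail estimate~\eqref{eq:tail-estimate} applied to both $s$ and $t$, with the tail of $t$ controlled by the triangle inequality. You then obtain~\eqref{eq:l1-infinite} by letting $n\to\infty$ in~\eqref{eq:truncation-limit}, exactly as the paper does.
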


\begin{proof}
Proposition~\ref{prop:strategy} expresses $L_n^*(s)$ as a finite minimum of
maxima of subset sums of $s_0,\ldots,s_{n-1}$. Each subset sum is $1$-Lipschitz in the $\ell_1$ norm;
finite maxima and minima preserve a common Lipschitz constant. This proves
\eqref{eq:finite-lipschitz}.

For $u\in\ell_2^\downarrow$ and $n\ge1$, write
$u^{[n]}=(u_0,\ldots,u_{n-1},0,0,\ldots)$ for the truncation of $u$, so that
$L_n^*(u)=L^*(u^{[n]})$ and, by~\eqref{eq:tail-estimate},
$|L^*(u)-L_n^*(u)|\le\sqrt2\|u-u^{[n]}\|_2$.
Fix $s\in\ell_2^\downarrow$. For any $t$ in that cone and any $n$,
\begin{align*}
|L^*(s)-L^*(t)|
&\le\sqrt n\|s-t\|_2
+\sqrt2\|s-s^{[n]}\|_2+\sqrt2\|t-t^{[n]}\|_2\\
&\le(\sqrt n+\sqrt2)\|s-t\|_2
+2\sqrt2\|s-s^{[n]}\|_2.
\end{align*}
First choose $n$ to make the final term small, and then choose $t$ close to
$s$. This proves continuity. Finally, \eqref{eq:finite-lipschitz} gives
$|L_n^*(s)-L_n^*(t)|\le\|s-t\|_1$ for every $n$. Letting $n\to\infty$ and
using~\eqref{eq:truncation-limit} proves~\eqref{eq:l1-infinite}.
\end{proof}

The arguments below concern packings of actual squares, so they assume
that the relevant sides are positive. The next lemma shows that lower
bounds and formulas proved under this assumption hold in general.
\begin{lemma}[Reduction to positive sides]\label{lem:positive}
Let $n\ge1$, and let $f$ be a continuous real-valued function of
$(s_0,\ldots,s_{n-1})$.
\begin{enumerate}[label=\textup{(\roman*)}]
\item If $L_n^*(s)\ge f(s)$ for every $s\in\ell_2^\downarrow$ with
$s_{n-1}>0$, then $L^*(s)\ge L_n^*(s)\ge f(s)$ for all
$s\in\ell_2^\downarrow$.
\item If $L_n^*(s)=f(s)$ for every $s\in\ell_2^\downarrow$ with
$s_{n-1}>0$, then $L_n^*(s)=f(s)$ for all $s\in\ell_2^\downarrow$.
\end{enumerate}
\end{lemma}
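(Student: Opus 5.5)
The plan is to approximate an arbitrary $s$ by vectors with positive entries and then pass to the limit using continuity. Fix $s\in\ell_2^\downarrow$ and $\varepsilon>0$. Define $s^\varepsilon$ by $s^\varepsilon_i=s_i+\varepsilon(n-i)$ for $i<n$ and $s^\varepsilon_i=0$ for $i\ge n$.

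First check that this is an admissible input. The sequence $s^\varepsilon$ is nonincreasing, since $s_i\ge s_{i+1}$ and $\varepsilon(n-i)>\varepsilon(n-i-1)$ for $i<n-1$. At the cutoff, $s^\varepsilon_{n-1}=s_{n-1}+\varepsilon>0=s^\varepsilon_n$. It lies in $\ell_2$ because it has finite support. So $s^\varepsilon\in\ell_2^\downarrow$ with $s^\varepsilon_{n-1}>0$, and the hypothesis applies to it.

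Second, as $\varepsilon\to0$, the first $n$ coordinates of $s^\varepsilon$ converge to $(s_0,\ldots,s_{n-1})$. Since $f$ depends only on these coordinates and is continuous, $f(s^\varepsilon)\to f(s)$. Also $L_n^*(s^\varepsilon)\to L_n^*(s)$ by \eqref{eq:finite-lipschitz} in Lemma~\ref{lem:continuity}, because
\[
|L_n^*(s^\varepsilon)-L_n^*(s)|\le\sum_{i<n}\varepsilon(n-i)\to0 .
\]
Here it matters that $L_n^*$ depends only on the first $n$ entries, so changing the tail of $s$ to zero costs nothing.

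Third, take limits in the hypotheses.
\begin{itemize}
\item For (i), letting $\varepsilon\to0$ in $L_n^*(s^\varepsilon)\ge f(s^\varepsilon)$ gives $L_n^*(s)\ge f(s)$. Lemma~\ref{lem:function} gives $L^*(s)\ge L_n^*(s)$, via monotonicity or via \eqref{eq:tail-estimate}, which completes the chain of inequalities.
\item For (ii), letting $\varepsilon\to0$ in the equality $L_n^*(s^\varepsilon)=f(s^\varepsilon)$ gives $L_n^*(s)=f(s)$.
\end{itemize}

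There is no real obstacle here. The only care needed is to choose a perturbation that keeps the sequence nonincreasing while making the $n$-th entry positive, which the staircase term $\varepsilon(n-i)$ does.
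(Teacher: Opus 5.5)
Your proof is correct and follows essentially the same route as the paper: perturb $s$ so that the first $n$ entries are positive, pass to the limit using \eqref{eq:finite-lipschitz} and the continuity of $f$, and obtain $L^*(s)\ge L_n^*(s)$ from monotonicity. The only difference is cosmetic: the paper adds a uniform $\varepsilon$ to $s_0,\ldots,s_{n-1}$ and keeps the tail, which stays nonincreasing because $s_{n-1}+\varepsilon\ge s_n$, so your staircase perturbation and zeroed tail are valid but not needed.
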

\begin{proof}
Fix $s\in\ell_2^\downarrow$ and $\varepsilon>0$, and let $s^\varepsilon$ be
obtained from $s$ by adding $\varepsilon$ to each of $s_0,\ldots,s_{n-1}$.
Then $s^\varepsilon\in\ell_2^\downarrow$, since
$s_{n-1}+\varepsilon\ge s_n$, and $s^\varepsilon_{n-1}\ge\varepsilon>0$.
By~\eqref{eq:finite-lipschitz},
$|L_n^*(s^\varepsilon)-L_n^*(s)|\le n\varepsilon$, and
$f(s^\varepsilon)\to f(s)$ as $\varepsilon\to0$ by the continuity of $f$.
Letting $\varepsilon\to0$ in the inequality or equality at $s^\varepsilon$
gives it at $s$. In case~(i), $L^*(s)\ge L_n^*(s)$ by the monotonicity in
Lemma~\ref{lem:function}.
\end{proof}

\section{Packing up to six squares}\label{sec:small}

\subsection{Elementary chain estimates}
Fix a packing of squares $Q_i$ of sides $s_i$ in $[0,L]^2$. For a set of
indices $I$, call $\sum_{i\in I}s_i$ its \emph{side sum}; the same term
applies to the corresponding collection $\{Q_i:i\in I\}$.
We say that $Q_i$ lies \emph{left of} $Q_j$ when $x_i+s_i\le x_j$, and
\emph{below} $Q_j$ when $y_i+s_i\le y_j$. Two squares are \emph{x-disjoint}
when one lies left of the other, that is, when the interiors of their
horizontal projections are disjoint, and \emph{x-overlapping} otherwise.
Define y-disjointness and y-overlap using vertical projections.
By~\eqref{eq:sep}, every pair of squares in a packing is x-disjoint or
y-disjoint (possibly both). A \emph{horizontal chain} is a collection of
pairwise x-disjoint squares, and a \emph{vertical chain} is a collection of
pairwise y-disjoint squares. For positive side lengths, the nonempty
horizontal and vertical chains of a packing are exactly the chains of the strategy of the
packing defined in Section~\ref{sec:strategies}.

\begin{lemma}[Chain inequality]\label{lem:chain}
In a packing of side $L$, every horizontal or vertical chain has side sum
at most $L$. In particular, $L^*(s)\ge s_0+s_1$ for every
$s\in\ell_2^\downarrow$.
\end{lemma}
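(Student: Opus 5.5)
The plan is to prove the chain inequality directly from the containment and separation conditions \eqref{eq:contain} and \eqref{eq:sep}, and then to deduce the lower bound $s_0+s_1$ by checking that the two largest squares always form a chain.

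\emph{Step 1: the chain inequality.} Let $\{Q_i:i\in I\}$ be a horizontal chain in a packing in $[0,L]^2$, and discard any zero sides, which contribute nothing to the side sum. The squares are pairwise x-disjoint, so "lies left of" is a total order on the chain; it is transitive because the horizontal projections have disjoint interiors. List the members as $i_1,\dots,i_k$ in this order. Then $x_{i_{m}}+s_{i_m}\le x_{i_{m+1}}$ for each $m$. Combining these inequalities with $x_{i_1}\ge0$ and $x_{i_k}+s_{i_k}\le L$ from \eqref{eq:contain} gives
\[
\sum_{m=1}^k s_{i_m}\le x_{i_k}+s_{i_k}-x_{i_1}\le L.
\]
The vertical case is identical, using the $y$-coordinates. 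This is the same computation as the first half of the proof of Lemma~\ref{lem:strategy-side}.

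\emph{Step 2: the lower bound.} Let $s\in\ell_2^\downarrow$. If $s_1=0$, the bound reduces to $L^*(s)\ge s_0$, which is immediate from containment of $Q_0$. Otherwise take an optimal packing, which exists by Lemma~\ref{lem:function}. By \eqref{eq:sep}, $Q_0$ and $Q_1$ are x-disjoint or y-disjoint, so $\{Q_0,Q_1\}$ is a horizontal or vertical chain. Step 1 then gives $s_0+s_1\le L^*(s)$.

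To avoid relying on attainment, one can argue instead with an arbitrary packing of side $L$, which gives $L\ge s_0+s_1$, and then take the infimum over all such $L$.

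None of these steps poses a real obstacle. The only points needing care are the transitivity that lets us order the chain linearly, and the treatment of zero sides.
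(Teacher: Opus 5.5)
Your proof is correct and is essentially the paper's argument. The paper notes that the projection intervals of a chain have disjoint interiors in $[0,L]$, so their lengths sum to at most $L$; your ordering-and-telescoping computation makes this explicit, and you handle the lower bound the same way (two positive squares always form a chain, and $s_1=0$ is covered by containment). One small remark: transitivity of ``lies left of'' comes from nonnegativity of the middle side, since $x_a+s_a\le x_b\le x_b+s_b\le x_c$, rather than from disjointness of interiors, but this does not affect the argument.
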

\begin{proof}
The projection intervals of a chain have disjoint interiors in $[0,L]$.
Hence the sum of their lengths is at most $L$. Any two positive squares form
a chain on at least one axis. If $s_1=0$, then
$s_0+s_1=s_0\le L$ by~\eqref{eq:contain}.
\end{proof}

\subsection{Four and five squares}
To check explicit coordinates, we use the elementary coefficient test for
nonincreasing side lengths. Set $s_n=0$. Then
\begin{equation}\label{eq:prefix-test}
\sum_{i=0}^{n-1}c_i s_i
=\sum_{k=0}^{n-1}\left(\sum_{i=0}^k c_i\right)(s_k-s_{k+1}).
\end{equation}
Thus, a linear form is nonnegative on every decreasing nonnegative tuple
if and only if all its coefficient prefix sums are nonnegative. Necessity
follows by taking the first $k+1$ sides equal to one and the rest zero. More generally, each coordinate containment or separation
inequality below has the form
$aL+\sum_{i=0}^{n-1}c_i s_i\ge0$ with $a\ge0$.
When $L$ is bounded below by the listed chain sums, the inequality is
certified by adding $a$ times one of those sums and applying the
prefix-sum test.

\begin{proposition}\label{prop:small-formulas}
For every $s\in\ell_2^\downarrow$,
\begin{equation}\label{eq:L4}
L_4^*(s)=s_0+s_1
\end{equation}
and
\begin{equation}\label{eq:L5}
L_5^*(s)=\max(s_0+s_1,s_2+s_3+s_4).
\end{equation}
In particular, $L_1^*(s)=s_0$ and $L_2^*(s)=L_3^*(s)=s_0+s_1$.
\end{proposition}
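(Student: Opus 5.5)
By Lemma~\ref{lem:positive}(ii), it suffices to prove \eqref{eq:L4} and \eqref{eq:L5} when the last relevant side is positive. The right-hand sides are continuous, so each proof splits into an upper bound by explicit coordinates and a lower bound by chains. The statements for $n\le3$ follow at the end. $L_1^*(s)=s_0$ holds because \eqref{eq:contain} forces $L\ge s_0$ and the single square fits in $[0,s_0]^2$. Since $L_n^*$ is nondecreasing in $n$ (deleting squares does not increase the side), Lemma~\ref{lem:chain} together with \eqref{eq:L4} gives $s_0+s_1\le L_2^*\le L_3^*\le L_4^*=s_0+s_1$.

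\emph{Four squares.} The lower bound $L_4^*\ge s_0+s_1$ is Lemma~\ref{lem:chain}. For the upper bound, I would take $L=s_0+s_1$ and place the four squares in the four corners of $[0,L]^2$: $Q_0$ at $(0,0)$, $Q_1$ at $(L-s_1,0)$, $Q_2$ at $(0,L-s_2)$ and $Q_3$ at $(L-s_3,L-s_3)$. Each separation condition in \eqref{eq:sep} then reduces to $s_i+s_j\le s_0+s_1$ for a suitable pair $i\ne j$, which holds by monotonicity of the sides. For example, $Q_0,Q_1$ are x-disjoint and $Q_0,Q_2$ are y-disjoint. Each remaining pair is separated along one axis by the same kind of inequality, and all of these pass the prefix-sum test \eqref{eq:prefix-test}.

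\emph{Five squares, upper bound.} I would follow the second panel of Figure~\ref{fig:small}, with $L=\max(s_0+s_1,s_2+s_3+s_4)$. Put $Q_0$ at $(0,0)$ and $Q_1$ at $(L-s_1,0)$, and put $Q_2,Q_3,Q_4$ in a top row at heights $L-s_i$ and horizontal positions $0$, $s_2$, and $L-s_4$. The row is horizontally separated because $s_2+s_3+s_4\le L$. Vertical separation between the bottom pair and the top row needs $s_0+s_2\le L$ and $s_1+s_j\le L$, which hold since $s_0+s_2\le s_0+s_1$. The bottom pair is separated because $s_0+s_1\le L$. Each check is certified by adding one of the two listed chain sums and applying the prefix-sum test.

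\emph{Five squares, lower bound.} This is the main obstacle. I must show that every packing in $[0,L]^2$ has $L\ge s_2+s_3+s_4$ (the bound $L\ge s_0+s_1$ is already known). I would argue by contradiction and suppose $L<s_2+s_3+s_4$. Then any two of the five squares have side sum greater than $L/2$ times something large; more precisely, since $s_2,s_3,s_4$ are the three smallest, the sum of any three sides exceeds $L$. Hence no chain of the packing has three members, and the horizontal and vertical comparability graphs are triangle-free. Every pair is comparable in at least one of the two orders, so the union of the two graphs is the complete graph $K_5$. I would then use the fact that $K_5$ cannot be covered by two triangle-free graphs. Such a cover would give a 2-edge-colouring of $K_5$ with no monochromatic triangle, i.e.\ the two colour classes would both be 5-cycles, complementary to each other. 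A comparability graph of a strict partial order cannot be an induced $C_5$, since $C_5$ is not transitively orientable (it is an odd cycle with no chords). Any overlap of the two graphs only makes the horizontal or vertical graph contain more edges, so it contains $C_5$ plus chords, which creates a triangle. In either case some chain has three members, which contradicts $L<s_2+s_3+s_4$. The step that needs the most care is this reduction: the sum of any three sides is at least $s_2+s_3+s_4$, and the comparability graphs must be exactly complementary 5-cycles.
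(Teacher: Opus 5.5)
Your proposal is correct, and apart from one step it matches the paper. The reduction to positive sides is the same, as are the corner placement for four squares (your $(L-s_1,0)$ is the paper's $(s_0,0)$ at $L=s_0+s_1$) and the top-row placement for five squares.

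The difference is in how you produce a three-square chain for the five-square lower bound. The paper argues directly. If no three squares are pairwise x-disjoint, then the squares with nothing to their left form one class and the remaining squares form another. Each class is pairwise x-overlapping, hence a vertical chain, and one class has at least three members. In graph terms, a triangle-free comparability graph is bipartite, so its complement is covered by two cliques.

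You instead use the classification of triangle-free $2$-colourings of $K_5$ (two complementary pentagons), together with the fact that $C_5$ is not transitively orientable. This is valid, and it shows why five is the delicate case. For six squares, $R(3,3)=6$ alone forces a three-chain. For five, the pentagon is the only obstruction, and transitivity removes it. However, the reason $C_5$ is not a comparability graph is that in a triangle-free transitive orientation every vertex is a source or a sink. That is exactly the paper's bipartition. So the paper's route is shorter and self-contained, and it is the height-two case of Mirsky's theorem, which extends to more squares.

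Two wording corrections. First, your sentence claiming that $K_5$ cannot be covered by two triangle-free graphs is false as written, since two complementary pentagons do so. What you need, and what you actually use, is that $K_5$ cannot be covered by two triangle-free \emph{comparability} graphs. Second, delete the garbled clause about $L/2$ times something large; the correct statement is the one that follows it.

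You can also shorten the overlap discussion. Colour a pair horizontal whenever it is x-disjoint. Then the horizontal colour class is the x-disjointness graph itself, which must be a pentagon, and the argument about chords creating triangles is unnecessary.
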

\begin{proof}
By Lemma~\ref{lem:positive}, it suffices to prove~\eqref{eq:L4} when
$s_3>0$ and~\eqref{eq:L5} when $s_4>0$. Lemma~\ref{lem:chain} gives
$L_4^*(s)\ge s_0+s_1$. For $L\ge s_0+s_1$, take lower left corners $(0,0)$,
$(s_0,0)$, $(0,L-s_2)$, and $(L-s_3,L-s_3)$. Squares $Q_2,Q_3$ lie above
$Q_0,Q_1$, respectively, $Q_2$ lies left of $Q_1$ and $Q_3$, and $Q_0$
lies left of $Q_1$ and $Q_3$. These relations and containment in
$[0,L]^2$ follow from $L\ge s_0+s_1$ by the coefficient
test~\eqref{eq:prefix-test}. Thus, $L_4^*(s)\le s_0+s_1$. Since
$L_n^*(s)=L_4^*(s_0,\ldots,s_{n-1},0,0,\ldots)$ for $n\le3$, the formulas
for $L_1^*,L_2^*,L_3^*$ are special cases of~\eqref{eq:L4}.

For five squares and $L\ge\max(s_0+s_1,s_2+s_3+s_4)$, take lower left
corners $(0,0)$, $(L-s_1,0)$, $(0,L-s_2)$, $(s_2,L-s_3)$, and
$(L-s_4,L-s_4)$ as in the second panel of Figure~\ref{fig:small}. Squares
$Q_2,Q_3,Q_4$ form a horizontal chain along the top edge and lie above
$Q_0$ and $Q_1$, while $Q_0$ lies left of $Q_1$. These relations and
containment in $[0,L]^2$ follow from the two lower bounds for $L$ by the
coefficient test. Thus, $L_5^*(s)\le\max(s_0+s_1,s_2+s_3+s_4)$.
For the reverse inequality, it suffices to show that a packing of five
squares contains a horizontal or vertical chain of three squares, since
its side sum is at least $s_2+s_3+s_4$. Suppose no
three of the squares are pairwise x-disjoint. Let $A$ be the set of squares
with no square to their left, and let $B$ consist of the remaining squares.
Two squares in $A$ are not x-disjoint, since otherwise one would lie left
of the other. Two squares $Q_i,Q_j\in B$ are not x-disjoint either: if
$Q_i$ lay left of $Q_j$, then a square $Q_k$ left of $Q_i$ would make
$Q_k,Q_i,Q_j$ pairwise x-disjoint. Thus, the squares in each of $A$ and $B$
are pairwise x-overlapping, hence pairwise y-disjoint. One of $A,B$
contains at least three of the five squares, and these form a vertical
chain.
\end{proof}

\subsection{The six-square construction}
Our coordinate formulas will satisfy a fixed choice of horizontal or
vertical separation for every pair throughout the parameter range. The next
lemma records how these uniform separations determine a strategy and its
cost.
\begin{lemma}[Uniform coordinates define a strategy]\label{lem:coords}
Let $g(s)\ge0$ be a continuous function of $(s_0,\ldots,s_{n-1})$. Suppose coordinates
$x_i(s,L),y_i(s,L)$ satisfy containment for every $s\in\ell_2^\downarrow$
and $L\ge g(s)$.
Define $i\prec_{\mathrm h}j$ when $x_i+s_i\le x_j$ holds for all such
$s$ and $L$, and define the vertical relation similarly. These are strict
partial orders. If every pair is comparable in at least one order, they
define a strategy $\sigma$ with $\ell_\sigma(s)\le g(s)$.
If $g$ is a maximum of side sums of chains of this strategy,
then $\ell_\sigma(s)=g(s)$.
\end{lemma}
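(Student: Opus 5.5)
We prove the lemma in three steps: the relations are strict partial orders, every chain sum is at most $g$, and the final equality follows. Fix $n$, $g$, and the coordinate functions as in the statement. Let $D$ be the set of pairs $(s,L)$ with $s\in\ell_2^\downarrow$ and $L\ge g(s)$.

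\textbf{Strict partial orders.} For irreflexivity, choose any $(s,L)\in D$ with $s_{n-1}>0$; for instance $s=(1,\ldots,1,0,\ldots)$ and $L=g(s)$. At such a point $x_i+s_i\le x_i$ fails, so $i\not\prec_{\mathrm h}i$. For transitivity, suppose $i\prec_{\mathrm h}j$ and $j\prec_{\mathrm h}k$. Then at every point of $D$ we have $x_i+s_i\le x_j\le x_j+s_j\le x_k$, using $s_j\ge0$. Hence $i\prec_{\mathrm h}k$. The vertical relation is handled identically. If every pair is comparable in at least one order, then $\sigma=(\prec_{\mathrm h},\prec_{\mathrm v})$ is a strategy by Definition~\ref{def:strategy}.

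\textbf{Chain sums are at most $g$.} Fix $s$ and put $L=g(s)$, so $(s,L)\in D$. Let $I=\{i_1\prec_{\mathrm h}\cdots\prec_{\mathrm h}i_m\}$ be a chain of $\prec_{\mathrm h}$. The defining inequalities hold at this particular point $(s,L)$, so
\[
x_{i_k}+s_{i_k}\le x_{i_{k+1}} \quad\text{for } 1\le k<m.
\]
Containment gives $x_{i_1}\ge0$ and $x_{i_m}+s_{i_m}\le L$. Telescoping these inequalities yields $\sum_{i\in I}s_i\le L=g(s)$. The same argument applies to vertical chains.

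This is the argument of the first half of Lemma~\ref{lem:strategy-side}. It cannot be quoted directly, because "following" a strategy is defined only for positive sides. The present argument avoids that restriction, since it uses the inequalities themselves rather than the positivity of the sides. Maximizing over $\mathcal C(\sigma)$ gives $\ell_\sigma(s)\le g(s)$.

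\textbf{Equality.} Suppose $g(s)=\max_{J\in\mathcal F}\sum_{i\in J}s_i$ for a family $\mathcal F\subseteq\mathcal C(\sigma)$. Then $g(s)\le\max_{I\in\mathcal C(\sigma)}\sum_{i\in I}s_i=\ell_\sigma(s)$, which combined with the previous step gives $\ell_\sigma(s)=g(s)$.

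The only delicate point is bookkeeping about the domain $D$. Irreflexivity needs $D$ to contain a point with positive sides, and the chain bound must be evaluated at the specific point $L=g(s)$. Continuity of $g$ is not needed beyond ensuring that $D$ is the intended closed set.
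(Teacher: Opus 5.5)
Your proof is correct and is essentially the paper's argument: transitivity from $s_j\ge0$, irreflexivity at a vector with all $n$ sides positive, the chain bound obtained by evaluating the uniform relations at $L=g(s)$, and the reverse inequality from the specified chains. The only difference is that you redo the telescoping instead of citing Lemma~\ref{lem:strategy-side} as the paper does. Your reason for not citing it is not quite right: ``follows'' is defined for every nonnegative $t$, with constraints imposed only on positive entries, so the coordinates at $L=g(s)$ already form a packing following $\sigma$. That lemma's device of listing only the positive-side members of a chain also avoids relying on containment for zero-side squares, which your endpoint step uses.
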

\begin{proof}
Each pointwise separation relation is transitive, because sides are
nonnegative. Their intersection is therefore transitive, and it is
irreflexive, as one sees at any $s$ with $s_0,\ldots,s_{n-1}>0$. The
uniform relations are satisfied by the coordinates at $L=g(s)$. Hence
Lemma~\ref{lem:strategy-side} gives the upper bound for the cost.
Each specified chain gives the reverse inequality.
\end{proof}

We now give a single strategy for six squares that works for every
nonincreasing side vector.
\begin{proposition}[Six-square construction]\label{prop:six-upper}
For every $s\in\ell_2^\downarrow$,
\[
L_6^*(s)\le\max(s_0+s_1,\ s_2+s_3+s_4,\ s_1+s_4+s_5).
\]
More precisely, for $L$ at least the right-hand side, the coordinates
\begin{equation}\label{eq:six-coordinates}
\begin{array}{c|ccc}
i&0&1&2\\\hline
(x_i,y_i)&(0,0)&(L-s_1,0)&(0,L-s_2)\\[2pt]
i&3&4&5\\\hline
(x_i,y_i)&(s_2,L-s_3)&(L-s_4,L-s_4)&(L-s_5,s_1)
\end{array}
\end{equation}
pack $Q_0,\ldots,Q_5$ in $[0,L]^2$ and define a strategy whose cost is the
right-hand side.
\end{proposition}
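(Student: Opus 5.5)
By Lemma~\ref{lem:positive}(i) applied to the upper bound (or directly, since the coordinates make sense for zero sides), it suffices to check the coordinates in~\eqref{eq:six-coordinates} when $L\ge g(s):=\max(s_0+s_1,\,s_2+s_3+s_4,\,s_1+s_4+s_5)$. The plan is to verify containment for each square, then exhibit one uniform separation for each of the fifteen pairs. Lemma~\ref{lem:coords} then turns these relations into a strategy $\sigma$ with $\ell_\sigma(s)\le g(s)$. The three sums in $g$ are chains of $\sigma$, so $\ell_\sigma=g$, and Lemma~\ref{lem:strategy-side} gives the bound on $L_6^*$.

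Containment is immediate except in a few places. For instance, $x_3+s_3=s_2+s_3\le L$ follows from $L\ge s_2+s_3+s_4$, and $y_5+s_5=s_1+s_5\le L$ follows from $L\ge s_1+s_4+s_5$. The rest reduce to $s_i\le L$, which holds because $L\ge s_0+s_1$.

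For the separations I will use the layout in the third panel of Figure~\ref{fig:small}, where $Q_0,Q_1$ sit on the bottom, $Q_2,Q_3,Q_4$ run along the top, and $Q_5$ sits on the right between $Q_1$ and $Q_4$. The intended relations are as follows.
\begin{itemize}
\item $Q_0$ is left of $Q_1$, since $s_0\le L-s_1$.
\item $Q_0$ is left of $Q_5$, since $s_0+s_5\le s_0+s_1\le L$.
\item $Q_0$ is below $Q_2$ and below $Q_3$; these follow from $s_0+s_2\le L$ and $s_0+s_3\le L$, using $s_2,s_3\le s_1$.
\item $Q_0$ is left of $Q_4$, since $s_0+s_4\le L$.
\item $Q_1$ is below $Q_2$, $Q_3$, $Q_4$, and $Q_5$. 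The first three follow from $s_1+s_j\le s_0+s_1$. The last holds with equality, because $y_5=s_1$.
\item $Q_2$ is left of $Q_3$ (exact), and left of $Q_4$ and $Q_5$, using $s_2+s_4\le L$ and $s_2+s_5\le L$.
\item $Q_3$ is left of $Q_4$, since $s_2+s_3+s_4\le L$.
\item $Q_5$ is below $Q_4$, since $s_1+s_5\le L-s_4$, which is exactly the third chain.
\end{itemize}

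The main obstacle is the pair $(3,5)$. Horizontally we would need $s_2+s_3\le L-s_5$, which follows from $L\ge s_2+s_3+s_4$ because $s_5\le s_4$. Vertically we would need $s_1+s_5\le L-s_3$, which does not follow from the three chains in general, since $s_3$ may exceed $s_4$. So I will take $Q_3$ left of $Q_5$.

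Each inequality is of the form $aL+\sum c_is_i\ge0$. I certify each one by substituting the appropriate chain sum for $L$ and applying the prefix test~\eqref{eq:prefix-test}.

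Every pair is now comparable, so Lemma~\ref{lem:coords} applies. The horizontal chain $\{2,3,4\}$, the horizontal chain $\{0,1\}$, and the vertical chain $\{1,5,4\}$ show $\ell_\sigma=g$.
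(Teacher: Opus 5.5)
Your proposal is correct and matches the paper's proof: verify containment and one uniform separation per pair by the coefficient test (with $Q_3$ left of $Q_5$ as the only delicate choice), then apply Lemma~\ref{lem:coords} with the chains $\{0,1\}$, $\{2,3,4\}$, $\{1,5,4\}$ to get cost equal to the right-hand side. One small point: Lemma~\ref{lem:positive}(i) is about lower bounds, so for this upper bound rely on your parenthetical argument instead (the coordinates and inequalities hold verbatim when some sides vanish).
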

\begin{proof}
Substitution verifies containment and a fixed separation for every pair.
Each of these inequalities follows from $s_0\ge\cdots\ge s_5$ and the three
lower bounds for $L$ by the coefficient test~\eqref{eq:prefix-test}. The
collections $\{Q_0,Q_1\}$ and $\{Q_2,Q_3,Q_4\}$ are horizontal chains, while
$Q_1,Q_5,Q_4$ form a vertical chain in that order. By
Lemma~\ref{lem:coords}, the coordinates define a strategy whose cost is the
right-hand side. Proposition~\ref{prop:strategy} gives the upper bound for
$L_6^*$.
\end{proof}
Figure~\ref{fig:small} illustrates the constructions.

\subsection{Four large squares}
\begin{figure}[!htbp]\centering
\begin{minipage}[t]{0.32\textwidth}\centering
\begin{tikzpicture}[x=0.17cm,y=0.17cm]
\useasboundingbox (0,-0.4) rectangle (20.4,20);
\filldraw[fill=black!7,draw=black,line width=.35pt] (3,3) rectangle (12,12);
\node[font=\scriptsize,inner sep=0pt] at (7.5,7.5) {$0$};
\filldraw[fill=black!7,draw=black,line width=.35pt] (12,0) rectangle (20,8);
\node[font=\scriptsize,inner sep=0pt] at (16,4) {$1$};
\filldraw[fill=black!7,draw=black,line width=.35pt] (0,12) rectangle (7,19);
\node[font=\scriptsize,inner sep=0pt] at (3.5,15.5) {$2$};
\filldraw[fill=black!7,draw=black,line width=.35pt] (13,11) rectangle (19,17);
\node[font=\scriptsize,inner sep=0pt] at (16,14) {$3$};
\draw[line width=.7pt] (0,0) rectangle (20,20);
\end{tikzpicture}\par
\smallskip{\footnotesize (a) windowpane, $L=20$\\ $s=(9,8,7,6)$}
\end{minipage}\hfill
\begin{minipage}[t]{0.32\textwidth}\centering
\begin{tikzpicture}[x=0.17cm,y=0.17cm]
\useasboundingbox (0,-0.4) rectangle (20.4,20);
\draw[densely dashed,line width=.35pt,black!55] (3,3) rectangle (12,12);
\filldraw[fill=black!7,draw=black,line width=.35pt] (0,0) rectangle (9,9);
\node[font=\scriptsize,inner sep=0pt] at (4.5,4.5) {$0$};
\filldraw[fill=black!7,draw=black,line width=.35pt] (12,0) rectangle (20,8);
\node[font=\scriptsize,inner sep=0pt] at (16,4) {$1$};
\filldraw[fill=black!7,draw=black,line width=.35pt] (0,12) rectangle (7,19);
\node[font=\scriptsize,inner sep=0pt] at (3.5,15.5) {$2$};
\filldraw[fill=black!7,draw=black,line width=.35pt] (13,11) rectangle (19,17);
\node[font=\scriptsize,inner sep=0pt] at (16,14) {$3$};
\draw[-{Stealth[length=3pt]},line width=.45pt] (12.3,12.3) -- (9.4,9.4);
\draw[line width=.7pt] (0,0) rectangle (20,20);
\end{tikzpicture}\par
\smallskip{\footnotesize (b) cornering, $L=20$\\ $s=(9,8,7,6)$}
\end{minipage}\hfill
\begin{minipage}[t]{0.32\textwidth}\centering
\begin{tikzpicture}[x=0.0773cm,y=0.0773cm]
\useasboundingbox (-3.96,-4.84) rectangle (44.88,44);
\draw[densely dashed,line width=.35pt,black!55] (24,0) rectangle (44,20);
\filldraw[fill=black!7,draw=black,line width=.35pt] (0,0) rectangle (21,21);
\node[font=\scriptsize,inner sep=0pt] at (10.5,10.5) {$0$};
\filldraw[fill=black!7,draw=black,line width=.35pt] (24,7) rectangle (44,27);
\node[font=\scriptsize,inner sep=0pt] at (34,17) {$1$};
\filldraw[fill=black!7,draw=black,line width=.35pt] (0,26) rectangle (18,44);
\node[font=\scriptsize,inner sep=0pt] at (9,35) {$p{=}2$};
\filldraw[fill=black!7,draw=black,line width=.35pt] (18,27) rectangle (35,44);
\node[font=\scriptsize,inner sep=0pt] at (26.5,35.5) {$c{=}3$};
\filldraw[fill=black!7,draw=black,line width=.35pt] (35,35) rectangle (44,44);
\node[font=\scriptsize,inner sep=0pt] at (39.5,39.5) {$4$};
\filldraw[fill=black!7,draw=black,line width=.35pt] (36,27) rectangle (44,35);
\node[font=\scriptsize,inner sep=0pt] at (40,31) {$5$};
\draw[densely dashed,line width=.3pt] (21,0) -- (21,44);
\node[font=\tiny,inner sep=1pt,below] at (21,0) {$s_0$};
\draw[densely dashed,line width=.3pt] (0,20) -- (44,20);
\node[font=\tiny,inner sep=1pt,left] at (0,20) {$s_1$};
\draw[-{Stealth[length=3pt]},line width=.45pt] (34,11) -- (34,4);
\draw[line width=.7pt] (0,0) rectangle (44,44);
\end{tikzpicture}\par
\smallskip{\footnotesize (c) forced relations, $L=44$\\ $s=(21,20,18,17,9,8)$}
\end{minipage}
\caption{(a) A windowpane in a square of side $20<s_1+s_2+s_3$: the columns are
$\{Q_0,Q_2\}$ and $\{Q_1,Q_3\}$, and the rows are $\{Q_0,Q_1\}$ and
$\{Q_2,Q_3\}$. (b) The cornering step of Lemma~\ref{lem:pane}: $Q_0$ has no
blocker to its left or below, and moves left and then down from the
dashed position to the lower left corner. (c) The conclusions of Lemma~\ref{lem:fixed-core} with $r=N=5$,
where every triple in the hypotheses has side sum at least
$s_1+s_3+s_5=45>L$. Here $p=2$ and $c=3$; the squares $Q_4,Q_5$ are
$x$-disjoint from $Q_p,Q_c$, $y$-overlap both, and lie above height $s_1$.
Square $Q_1$ can be lowered to the floor. The dashed lines are $x=s_0$
and $y=s_1$, and the dashed outline is the lowered position of $Q_1$.}
\label{fig:pane}
\end{figure}

The five-square lower bound needed only the existence of a three-square
chain. For six squares we need more information, and we first set up the
method used for all remaining lower bounds.

Suppose that we wish to prove a lower bound of the form
$L^*(s)\ge\Sigma$. We assume to the contrary that some packing fits in a square of
side $L<\Sigma$. We say that a set of indices $I$, or the corresponding collection
of squares $\{Q_i:i\in I\}$, \emph{clears} the threshold $\Sigma$
when $\sum_{i\in I}s_i\ge\Sigma$. By Lemma~\ref{lem:chain}, such a collection cannot be a
horizontal or vertical chain. The proofs below use these exclusions to force overlaps and then analyze the resulting geometry to get a contradiction.

For example, suppose $Q_a,Q_b$ are y-disjoint and $\{a,b,e\}$ clears $\Sigma$.
Then $Q_e$ must y-overlap $Q_a$ or $Q_b$, since otherwise the three
squares would form a vertical chain with side sum exceeding $L$.
Whenever two squares y-overlap, they must be x-disjoint. We call the
forced y-overlap with one of $Q_a,Q_b$ a \emph{vertical pin} of $Q_e$
to the pair $Q_a,Q_b$. A pin to a longer chain, or a horizontal pin,
means the analogous necessary overlap.

We will sometimes translate a square to a wall while keeping the other
squares fixed. For instance, a \emph{blocker to the right} of $Q_i$
is a square to its right whose vertical projection overlaps that of
$Q_i$: moving $Q_i$ right would eventually meet it. If there is no
such blocker, $Q_i$ can be moved to the right wall without disturbing
the packing. Blockers in other directions are defined analogously.
Reflections and interchanges of the axes move the entire packing.
Because clearing depends only on the side lengths, an exclusion based
on a clearing set remains valid during any motion that preserves a packing.

If every triple among the four largest squares clears $\Sigma$, their projections
must have a constrained arrangement, which we call a \emph{windowpane}
(Figure~\ref{fig:pane}(a)). Its ``columns'' will be x-overlapping pairs and
its ``rows'' y-overlapping pairs. With an additional clearing hypothesis,
this structure also lets us move the largest square into a corner of the
full packing (Figure~\ref{fig:pane}(b)). Both facts will be
used again for seven squares.
\begin{lemma}[Windowpane and cornering]\label{lem:pane}
Fix a packing of side $L<\Sigma$, and suppose every triple among $Q_0,Q_1,Q_2,Q_3$ clears $\Sigma$. The four squares split
into two x-overlapping pairs, called columns, and into two
y-overlapping pairs, called rows. No square x-overlaps
both members of the other column, or y-overlaps both members of
the other row.

If, in addition, every triple $\{0,b,j\}$ with
$b\in\{1,2,3\}$ and $j\notin\{0,b\}$ in the full packing clears $\Sigma$, then
$Q_0$ can be translated to a corner in that full packing.
\end{lemma}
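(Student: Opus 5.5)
The proof has two parts: a purely combinatorial analysis of the four squares for the windowpane, and a translation argument for the cornering.

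\emph{Windowpane.} Since $L<\Sigma$ and every triple among $Q_0,\dots,Q_3$ clears $\Sigma$, Lemma~\ref{lem:chain} says no three of them form a horizontal or vertical chain. So the x-disjointness graph $G_x$ on these four squares is triangle-free, and so is the y-disjointness graph $G_y$. Every pair is x-disjoint or y-disjoint, so $G_x\cup G_y=K_4$.

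Note that $G_x$ is the comparability graph of the interval order ``lies left of.'' Triangle-freeness means this interval order has height at most two. The argument in the proof of Proposition~\ref{prop:small-formulas} then splits the four squares into the minimal ones $A$ and the rest $B$, each pairwise x-overlapping. Hence each of $A$ and $B$ is a y-chain, so each has at most two elements, giving $|A|=|B|=2$. These are the two columns, and they partition the four squares.

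The same argument with the axes swapped gives two rows. A row cannot equal a column, since a pair cannot both x-overlap and y-overlap. So each row contains exactly one member of each column, which gives the $2\times2$ pattern.

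Next suppose some square $Q$ x-overlapped both members $P,P'$ of the other column. The pair $\{P,P'\}$ x-overlaps, so it is y-disjoint. Then $Q$ is y-disjoint from both $P$ and $P'$, and $\{Q,P,P'\}$ is a vertical chain, which is excluded. The row statement follows by symmetry. I would also record a consequence for later use: each square x-overlaps exactly one member of the other column, namely the one in its own row is x-disjoint from it.

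\emph{Cornering.} Next I would push $Q_0$ left, then down. These are the two directions that the hypothesis on triples $\{0,b,j\}$ should make blocker-free after a reflection.

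First choose the reflection. Let $Q_a$ be the column partner of $Q_0$ and $Q_b$ its row partner. Since $Q_0$ and $Q_b$ are x-disjoint, reflect horizontally if needed so that $Q_b$ lies to the right of $Q_0$. Since $Q_0$ and $Q_a$ are y-disjoint, reflect vertically if needed so that $Q_a$ lies above $Q_0$. The claim is then that $Q_0$ has no blocker on the left.

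Suppose $Q_j$ were such a blocker. Then $Q_j$ lies left of $Q_0$ and y-overlaps it, so $Q_j$ lies left of $Q_b$ as well. This makes $\{Q_j,Q_0,Q_b\}$ a horizontal chain, but $\{0,b,j\}$ clears $\Sigma$, a contradiction. Here $j$ ranges over all indices other than $0$ and $b$, including the remaining big square; that case is consistent with the windowpane structure.

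Slide $Q_0$ to the left wall. Moving $Q_0$ horizontally does not change any y-relations. A blocker below would be a square $Q_j$ below $Q_0$ and x-overlapping it, so below $Q_a$, and $\{Q_j,Q_0,Q_a\}$ would be a vertical chain clearing $\Sigma$. So $Q_0$ slides down to the floor.

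The main subtlety is that the first translation changes x-relations. The argument has to use only the ordering relations with $Q_a$ and $Q_b$, which persist: $Q_b$ stays to the right of $Q_0$ while $Q_0$ moves left, and $Q_a$ stays above it. This works because the exclusions depend only on side sums, so they remain valid throughout the motion, as noted before the lemma. I expect this bookkeeping to be the main obstacle, so I would state explicitly that both translations preserve the packing.
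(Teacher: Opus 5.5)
Your proof is correct and is essentially the paper's argument: splitting the squares into minimal and non-minimal ones is an explicit form of the paper's bipartition of the triangle-free x-disjointness graph, and your cornering uses the same ingredient (a large square x-disjoint, resp.\ y-disjoint, from $Q_0$ that closes a three-square chain through $Q_0$), except that you fix the orientation in advance using the row and column partners, which removes the paper's case split on whether the blockers are large or small. Your side remark that each square x-overlaps \emph{exactly} one member of the other column is false, since in Figure~\ref{fig:pane}(a) $Q_0$ is x-disjoint from both $Q_1$ and $Q_3$, so only ``at most one'' holds; your argument never uses the remark, so it should simply be dropped or weakened.
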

\begin{proof}
Make a graph with vertices $0,1,2,3$, joining two indices when their
squares are x-disjoint. The graph has no triangle, since three pairwise x-disjoint squares
would form a clearing horizontal chain. Since it has
only four vertices, it has no odd cycle and is bipartite. Each of its
two parts consists of pairwise x-overlapping squares. Such squares are
pairwise y-disjoint, and a part cannot contain three indices, since a
vertical triple would clear $\Sigma$. Hence the parts both have size two, giving
the two columns. Repeating the argument with the coordinate axes
interchanged gives the two rows. A square that x-overlapped both members
of the other column would form a forbidden vertical triple with them.
The row assertion follows by symmetry.

For the cornering statement, consider blockers on opposite sides of
$Q_0$ in the horizontal direction. They are x-disjoint from $Q_0$ and
from each other. Together with $Q_0$ they form a horizontal chain.
If either blocker is among $Q_1,Q_2,Q_3$, this triple clears $\Sigma$. If both
are smaller squares, there is a square $Q_b$, $b\in\{1,2,3\}$,
x-disjoint from $Q_0$: otherwise $Q_0$ and both squares of the other column would
form a forbidden vertical triple. Square $Q_b$ is on one side of
$Q_0$ and, together with the blocker on the opposite side, forms a
clearing horizontal triple with $Q_0$. Thus, there cannot be blockers
on both sides. Translate $Q_0$ to a vertical wall, and repeat the
argument vertically to reach a corner; reflect the packing if needed.
All the side-sum exclusions remain valid during these motions.
\end{proof}

The next lemma concerns the $r+1$ largest squares $Q_0,\ldots,Q_r$ of a
packing of $Q_0,\ldots,Q_N$, and its hypotheses involve only indices in
$\{0,\ldots,r\}$. In the proof we remove $Q_{r+1},\ldots,Q_N$, lower $Q_1$
in the resulting packing of $Q_0,\ldots,Q_r$, derive relations among squares
whose positions have not changed, and then use those relations in the
original packing. The auxiliary translation need not be possible when
$Q_{r+1},\ldots,Q_N$ are restored.
Figure~\ref{fig:pane}(c) illustrates the conclusions.
\begin{lemma}[Relations forced by four large squares]\label{lem:fixed-core}
Suppose $Q_0=[0,s_0]^2$ in a packing of $Q_0,\ldots,Q_N$ of side $L<\Sigma$, where $N\ge r\ge4$.
Assume that the following triples of indices in $\{0,\ldots,r\}$ clear $\Sigma$:
\begin{enumerate}[label=\textup{(\roman*)}]
\item every triple among $0,1,2,3$;
\item $\{0,1,j\}$ for $2\le j\le r$;
\item $\{0,b,m\}$ and $\{1,b,m\}$ for $b\in\{2,3\}$ and $4\le m\le r$.
\end{enumerate}
After possibly interchanging axes, $x_1\ge s_0$ and $y_1<s_0$.
There are distinct $p,c\in\{2,3\}$ such that $p$ x-overlaps $0$,
$c$ x-overlaps $1$, and $c$ lies above $1$. Moreover, in the
original full packing:
\begin{enumerate}[label=\textup{(\alph*)}]
\item $p,c$ are x-disjoint, with $p$ left of $c$;
\item each $m\in\{4,\ldots,r\}$ is x-disjoint from both $p,c$ and y-overlaps
      both of them;
\item $y_m\ge s_1$ for every such $m$.
\end{enumerate}
If $r=N$, square $Q_1$ can be moved to the floor in the full packing.
\end{lemma}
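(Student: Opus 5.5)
We would first reduce to the four largest squares and apply Lemma~\ref{lem:pane}. Hypothesis~(i) makes $Q_0,Q_1,Q_2,Q_3$ a windowpane, with two columns (x-overlapping pairs) and two rows (y-overlapping pairs). The first goal is to show that $0$ and $1$ lie in a common row or a common column. They cannot share both, since then $Q_0,Q_1$ would overlap in both projections. Suppose instead the columns are $\{0,p\},\{1,c\}$ and the rows are $\{0,c\},\{1,p\}$. Then $Q_0,Q_1$ are both x-disjoint and y-disjoint. By hypothesis~(ii) with $j\in\{2,3\}$, each of $Q_2,Q_3$ is pinned both horizontally and vertically to the pair $Q_0,Q_1$. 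Since $Q_0=[0,s_0]^2$ sits in the corner, we would then locate $Q_p,Q_c$ in the two off-diagonal regions determined by $Q_0$ and $Q_1$. From there we would exhibit a horizontal or vertical triple such as $\{0,b,1\}$ or $\{p,c,\cdot\}$ that clears $\Sigma$, which is a contradiction. We expect this case analysis to be the main obstacle, and we would try first to reduce it to the windowpane clause that no square x-overlaps both members of the other column.

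Once $0,1$ share a row, possibly after interchanging axes, they are x-disjoint. Because $Q_0$ touches the left wall, $Q_1$ lies to its right, so $x_1\ge s_0$. The row condition gives $y_1<s_0$. The columns are then $\{0,p\}$ and $\{1,c\}$ with $\{p,c\}=\{2,3\}$, and $c$ is y-disjoint from $1$. To show that $c$ lies above $1$, we pass to the packing of $Q_0,\dots,Q_r$ alone and lower $Q_1$. A blocker below $Q_1$ together with $Q_1$ and a third large square would form a vertical chain that clears $\Sigma$ by~(ii) or~(iii). In addition, $c$ cannot lie below $Q_1$, because $y_1<s_0$ leaves no room between $Q_1$ and the floor for a square y-disjoint from $Q_1$ when $\{1,c,0\}$ clears $\Sigma$. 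Claim~(a) follows from the windowpane: $p$ overlaps the column of $0$ and $c$ overlaps the column of $1$, so $p,c$ are x-disjoint, and since $x_0=0<x_1$ the square $p$ lies left of $c$.

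For~(b), fix $m\in\{4,\dots,r\}$. The pairs $\{0,p\}$ and $\{1,c\}$ are y-disjoint, so hypothesis~(iii) pins $Q_m$ vertically to each pair. The pairs $\{0,1\}$ and $\{p,c\}$ are x-disjoint, so $Q_m$ is also pinned horizontally. Comparing these pins with the positions already found ($Q_0$ in the corner, $Q_1$ to its right in the bottom row, $p,c$ above them in the top row), we would rule out every y-overlap of $Q_m$ with $Q_0$ or $Q_1$ that is compatible with the horizontal pins. This forces $Q_m$ to y-overlap both $p$ and $c$, and hence to be x-disjoint from both. For~(c), $Q_m$ y-overlaps $Q_c$, which lies above the lowered $Q_1$, and $Q_m$ is not below $Q_1$ by the pin to $\{1,c\}$. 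After lowering $Q_1$ to the floor, this gives $y_m\ge s_1$. These positions involve only squares that were never moved, so the relations transfer to the original packing. Finally, if $r=N$ there are no further squares to restore, so the lowering of $Q_1$ carried out in the reduced packing is itself a motion of the full packing.
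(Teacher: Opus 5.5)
Your proposal has a genuine gap. The two facts that fix the configuration are that $Q_0,Q_1$ are not diagonal and that $Q_c$ lies above $Q_1$. Neither can be obtained from $Q_0,\dots,Q_3$ alone, yet your arguments for both use only these four squares. Take $s=(9,8,7,6)$ and $L=20<21$, so that every triple among $0,1,2,3$ clears, and put $Q_0=[0,9]^2$ and $Q_3=[12,18]\times[0,6]$. Then take either of the following.
\begin{itemize}
\item $Q_1=[12,20]^2$ and $Q_2=[0,7]\times[13,20]$. Here $Q_0,Q_1$ are both x-disjoint and y-disjoint.
\item $Q_1=[12,20]\times[6,14]$ and $Q_2=[0,7]\times[9,16]$. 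Here $x_1\ge s_0$ and $y_1<s_0$, and $Q_c=Q_3$ lies below $Q_1$ with exactly enough room, since $y_1=s_3$.
\end{itemize}
So no reduction to the windowpane clause can settle the diagonal case, and your ``no room'' argument is false.

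The paper rules out both cases using $Q_4$. In the diagonal case, the vertical pin of $Q_4$ to $Q_0,Q_1$ gives one of two outcomes. If $Q_4$ y-overlaps $Q_0$, the vertical pin to $Q_1,Q_c$ yields a clearing horizontal triple through $Q_0$. If $Q_4$ y-overlaps $Q_1$, the horizontal pin to $Q_p,Q_1$ yields the clearing vertical chain $Q_0,Q_p,Q_4$. If $Q_c$ were below $Q_1$, then $Q_p$ must y-overlap $Q_1$, and the pins of $Q_4$ to $Q_c,Q_1$ and to $Q_0,Q_1$ again give a clearing chain. Your lowering step cannot replace this. Excluding downward blockers needs $Q_c$ above $Q_1$ as the third member of the vertical chain, and those triples come from (i) and (iii), not (ii). Using the lowering to prove that $Q_c$ lies above $Q_1$ is therefore circular.

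The second missing idea is that (a)--(c) must be derived in the auxiliary packing with $Q_1$ on the floor, not in the original one. Your plan for (b) is to show that $Q_m$ y-overlaps neither $Q_0$ nor $Q_1$, but that can be false. Take $s=(10,10,9,9,2)$, $L=20<21$, and
$Q_0=[0,10]^2$, $Q_1=[10,20]\times[1,11]$, $Q_2=[2,11]\times[11,20]$, $Q_3=[11,20]^2$, $Q_4=[0,2]\times[10,12]$.
All of (i)--(iii) hold, yet $Q_4$ y-overlaps the original $Q_1$. Its vertical pin to $\{1,c\}$ therefore says nothing about $Q_c$. There are three further problems.
\begin{itemize}
\item The horizontal pin to $\{p,c\}$ that you invoke is unavailable, since $\{2,3,m\}$ is not among (i)--(iii).
\item Claim (a) does not follow from $p,c$ lying in different columns, because the windowpane allows a square to x-overlap one member of the other column.
\item For (c), y-overlapping $Q_c$ and not lying below $Q_1$ do not give $y_m\ge s_1$.
\end{itemize}

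Here is how the paper handles (a)--(c). Once $Q_1$ is lowered, the horizontal pin to $\{0,1\}$ places each of $Q_p,Q_4,\dots,Q_r$ above $Q_0$ or above the lowered $Q_1$, hence at height at least $s_1$; this is (c). These squares are then y-disjoint from the lowered $Q_1$, so the vertical pins to $\{1,c\}$ and $\{1,p\}$ force y-overlaps with $Q_c$ and $Q_p$. This gives (b) and the x-disjointness in (a). Moreover, $p$ lies left of $c$, because $x_c+s_c\le x_p<s_0\le x_1$ would contradict the x-overlap of $c$ and $1$. Since $Q_p,Q_c,Q_m$ never moved, everything transfers to the original packing.
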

\begin{proof}
Apply the windowpane statement and choose axes so that $x_1\ge s_0$.
Suppose first that $y_1\ge s_0$. Let $p$ and $q$ be the column and row
partners of $0$; they are $2,3$ in some order. Then $p$ lies above $0$,
$q$ lies right of $0$, and $1$ lies right of $p$ and above $q$. These
relations follow from the windowpane, since $1,p$ lie in a row and $1,q$ in
a column, together with $x_p<s_0\le x_1$ and $y_q<s_0\le y_1$.

Use $m=4$. Its vertical pin to $Q_0,Q_1$ makes it overlap one of them. If it
overlaps $0$, it lies right of $0$, and the vertical pin to $Q_1,Q_q$ forces
an overlap with $1$ or $q$. It is x-disjoint from that square, and both
are right of $0$, giving a clearing horizontal triple. Otherwise $m$ is
y-disjoint from $0$ and y-overlaps $1$. It is x-disjoint from $1$;
the horizontal pin to $Q_p,Q_1$ makes it x-overlap $Q_p$. The vertical
chain $Q_0,Q_p,Q_m$ now clears $\Sigma$, a contradiction. Hence $y_1<s_0$.

Let $c$ be the column partner of $1$ and $p$ the column partner of $0$.
Suppose $c$ lies below $1$. Then $y_c+s_c\le y_1<s_0$. Hence $c$ lies right
of $0$, whereas $p$ lies above $0$. Square $p$ cannot lie below $1$, and
cannot lie above it because $Q_c,Q_1,Q_p$ would be a vertical chain. Thus, $p$
y-overlaps $1$ and is x-disjoint from it.

Again use $m=4$. Its vertical pin to $Q_c,Q_1$ forces an overlap. If it overlaps
$c$ but not $1$, it lies below $1$ and right of $0$, and is x-disjoint from
$c$, giving the clearing horizontal chain $Q_0,Q_c,Q_m$. Therefore, it overlaps
$1$ and is x-disjoint from it. Since $Q_m$ is x-disjoint from $Q_1$, the
horizontal pin to $Q_0,Q_1$ forces it to x-overlap $Q_0$. Thus, $Q_m$ is
y-disjoint from $Q_0=[0,s_0]^2$ and lies above it. If $m,p$ are x-disjoint, the horizontal chain $Q_1,Q_p,Q_m$ clears $\Sigma$;
otherwise the vertical chain $Q_0,Q_p,Q_m$ clears $\Sigma$. Hence $c$ lies above $1$.

Now remove $Q_{r+1},\ldots,Q_N$. In the packing of $Q_0,\ldots,Q_r$, square $1$ can be
lowered to the floor with all the other squares fixed. A downward blocker
other than $0$ would complete a clearing vertical triple with $1,c$,
while $0$ is x-disjoint from $1$. In the auxiliary placement, $Q_0$ and the
lowered $Q_1$ both lie on the floor. Each of $Q_p,Q_4,\ldots,Q_r$ is pinned
horizontally to $Q_0,Q_1$. It therefore x-overlaps one of them, is
y-disjoint from it, and lies above it. Since $s_0\ge s_1$, each of these squares lies above
height $s_1$. The vertical pins to $Q_1,Q_c$ make these squares y-overlap $Q_c$,
hence be x-disjoint from it. The pins to $Q_1,Q_p$ likewise make
each $m\ge4$ y-overlap $p$, hence be x-disjoint from it.

All coordinates occurring in (a)--(c) have remained fixed; only the
vertical coordinate of $1$ changed. Thus, these assertions hold in the
original full packing, regardless of $Q_{r+1},\ldots,Q_N$. Finally, $p$
lies left of $c$: the reverse order would give
$x_c+s_c\le x_p<s_0\le x_1$, contradicting the x-overlap of $c,1$.
If $r=N$, the auxiliary motion is a motion of the full packing.
\end{proof}

\subsection{A column bound and the six-square formula}

\begin{proposition}[Column bound]\label{prop:column-bound}
For every $n\ge6$ and $s\in\ell_2^\downarrow$,
\begin{equation}\label{eq:column-bound}
L^*(s)\ge L_n^*(s)\ge\min\left(s_1+s_3+s_{n-1},\ P_n(s)\right),
\qquad\text{where }P_n(s)=s_1+\sum_{i=4}^{n-1}s_i.
\end{equation}
\end{proposition}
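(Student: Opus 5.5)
Write $\Sigma=\min(s_1+s_3+s_{n-1},P_n(s))$. By Lemma~\ref{lem:positive}(i) it suffices to treat $s_{n-1}>0$. The bound $L^*(s)\ge L_n^*(s)$ is just monotonicity. So the plan is to assume a packing of $Q_0,\dots,Q_{n-1}$ in $[0,L]^2$ with $L<\Sigma$ and derive a contradiction. Specifically, we aim to exhibit either a chain clearing $s_1+s_3+s_{n-1}$ or the vertical chain $Q_1,Q_4,\dots,Q_{n-1}$, whose side sum is exactly $P_n(s)$.

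\textbf{Step 1: verify the clearing hypotheses.} All of them follow from the prefix comparisons $s_0\ge s_1$, $s_b\ge s_3$ for $b\le3$, and $s_j\ge s_{n-1}$.
\begin{itemize}
\item Every triple among $0,1,2,3$ has side sum at least $s_1+s_2+s_3\ge s_1+s_3+s_{n-1}\ge\Sigma$.
\item Each triple $\{0,b,j\}$ with $b\in\{1,2,3\}$ and $j\notin\{0,b\}$ has side sum $s_0+s_b+s_j\ge s_1+s_3+s_{n-1}$.
\item Each triple $\{0,1,j\}$, $\{0,b,m\}$, $\{1,b,m\}$ with $b\in\{2,3\}$ and $m\ge4$ has side sum at least $s_1+s_3+s_m\ge\Sigma$.
\end{itemize}
Hence Lemma~\ref{lem:pane} applies and lets us translate $Q_0$ into a corner. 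After a reflection we may take $Q_0=[0,s_0]^2$. Then Lemma~\ref{lem:fixed-core} applies with $N=r=n-1$. After possibly interchanging axes, this gives $x_1\ge s_0$, indices $p,c$ with $p$ left of $c$, and the conclusion that every $Q_m$ with $m\ge4$ is x-disjoint from $Q_p$ and $Q_c$, y-overlaps both, and satisfies $y_m\ge s_1$. Since $r=N$, we may also lower $Q_1$ to the floor, so $Q_1$ lies below every such $Q_m$.

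\textbf{Step 2: show the small squares are pairwise y-disjoint.} Given this, $\{Q_1,Q_4,\dots,Q_{n-1}\}$ is a vertical chain of side sum $P_n(s)\ge\Sigma>L$, contradicting Lemma~\ref{lem:chain}. So it suffices to show that no two of $Q_4,\dots,Q_{n-1}$ are x-disjoint. Suppose $Q_m,Q_{m'}$ were x-disjoint. I would split into cases according to where $Q_m,Q_{m'}$ sit horizontally relative to $Q_p$ and $Q_c$: left of $p$, between $p$ and $c$, or right of $c$. Recall from the proof of Lemma~\ref{lem:fixed-core} that each small square x-overlaps $Q_0$ or $Q_1$ and lies above it. In each case I would build a horizontal chain containing $Q_0$ or $Q_1$ together with one of $Q_p,Q_c$ and one of $Q_m,Q_{m'}$. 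Such a chain clears $s_1+s_3+s_{n-1}$, giving a contradiction. For instance:
\begin{itemize}
\item If both small squares lie right of $c$, I would use that $Q_c$ x-overlaps $Q_1$, and that a square right of $Q_c$ and above height $s_1$ must be x-disjoint from $Q_0$. This should produce a chain $Q_0,Q_c,Q_m$.
\item If both lie left of $p$, then both are left of $c$, and I would look for a chain through $Q_1$.
\end{itemize}

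\textbf{Main obstacle.} I expect Step 2 to be the hard part. The difficulty is that four-element chains such as $\{p,c,m,m'\}$ only clear $s_2+s_3+s_m+s_{m'}$, which need not exceed $s_1+s_3+s_{n-1}$. The argument must therefore really use the positions of $Q_0$ and $Q_1$: $Q_0$ is in the corner and $Q_1$ is on the floor to its right. The aim is to force a three-element chain containing $Q_0$ or $Q_1$ and one of $Q_2,Q_3$.

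If this case analysis stalls, the fallback is to prove the y-disjointness inductively. Using the pins to the pair $Q_p,Q_c$ established in Lemma~\ref{lem:fixed-core}, I would show that any small square x-disjoint from another small one is pinned into a forbidden triple with $Q_0$ or $Q_1$ and one of $Q_p,Q_c$.
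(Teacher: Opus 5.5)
\textbf{There is a genuine gap in Step 2.} Through cornering $Q_0$, applying Lemma~\ref{lem:fixed-core} with $r=N=n-1$, and lowering $Q_1$, you follow the paper. But Step~2 cannot be done the way you plan. When two small squares are x-disjoint, the contradiction need not come from any three-square chain through $Q_0$ or $Q_1$ and one of $Q_p,Q_c$.

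Here is a concrete configuration. Take $n=7$ and $s=(10,10,7,7,6,6,6)$, so $s_1+s_3+s_6=23$, $P_7=28$ and $s_0+s_1=20$. Pack in $[0,26]^2$:
\begin{itemize}
\item $Q_0=[0,10]^2$ and $Q_1=[13,23]\times[0,10]$;
\item $Q_2=[0,7]\times[10,17]$ and $Q_3=[7,14]\times[10,17]$;
\item $Q_4=[14,20]\times[10,16]$, $Q_5=[20,26]\times[10,16]$ and $Q_6=[14,20]\times[16,22]$.
\end{itemize}
This packing has side $26$, but it satisfies every exclusion your argument is allowed to use. The triples with side sum at least $23$ are exactly those containing both $0,1$, or one of $0,1$ together with one of $2,3$. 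None of them is a chain. $Q_0$ is cornered, $Q_1$ is on the floor, and all conclusions of Lemmas~\ref{lem:pane} and~\ref{lem:fixed-core} hold with $p=2$, $c=3$. Yet $Q_4$ and $Q_5$ are x-disjoint.

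Your first bullet fails here because $Q_3$ straddles the line $x=s_0$, so $Q_0,Q_3,Q_4$ is not a chain. Your fallback via pins looks for the same kind of forbidden triple, and none exists. The chain $Q_0,Q_1$ does not help either, since $s_0+s_1=20<23$. What actually rules out a side below $23$ is the four-chain $Q_2,Q_3,Q_4,Q_5$ of side sum $26$, which is exactly the obstacle you flagged.

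\textbf{The missing step is arithmetic, not geometric.} The four-chain clears the threshold in the only case that needs proof.
\begin{itemize}
\item If $\Sigma\le s_0+s_1$, Lemma~\ref{lem:chain} already gives $L\ge\Sigma$.
\item Otherwise $s_1+s_3+s_{n-1}\ge\Sigma>s_0+s_1$, so $s_3+s_{n-1}>s_0\ge s_1$. Hence $s_2+s_{n-2}\ge s_3+s_{n-1}>s_1$.
\end{itemize}
In the second case, any distinct $m,m'\ge4$ satisfy $s_2+s_3+s_m+s_{m'}\ge s_2+s_3+s_{n-2}+s_{n-1}>s_1+s_3+s_{n-1}\ge\Sigma$.

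By (a) and (b) of Lemma~\ref{lem:fixed-core}, if $Q_m$ and $Q_{m'}$ are x-disjoint then $Q_p,Q_c,Q_m,Q_{m'}$ are pairwise x-disjoint. That four-chain is therefore excluded. So the squares $Q_4,\dots,Q_{n-1}$ pairwise x-overlap, and together with the lowered $Q_1$ they form the vertical chain of side sum $P_n$.

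The paper organizes this with the auxiliary threshold $Z=s_2+s_3+s_{n-2}+s_{n-1}$. It first proves $L\ge\min(s_1+s_3+s_{n-1},P_n,Z)$ by exactly this dichotomy, and then removes $Z$ with the computation above.
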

\begin{proof}
By Lemma~\ref{lem:positive}, it suffices to prove the inequality for
$L_n^*$ when $s_{n-1}>0$. Set
\[
D=s_1+s_3+s_{n-1}\quad\text{and}\quad Z=s_2+s_3+s_{n-2}+s_{n-1},
\]
and suppose that $L<\Sigma:=\min(D,P_n,Z)$. Every triple of indices among $0,1,2,3$ clears $\Sigma$; so do $\{0,b,j\}$ with $b\in\{1,2,3\}$ and any
other index $j$, and $\{1,b,m\}$ with $b\in\{2,3\}$ and $m\ge4$.
Each of these triples has side sum at least $D$, because $s$ is nonincreasing. Lemma~\ref{lem:pane} corners $Q_0$,
and Lemma~\ref{lem:fixed-core} with $r=N=n-1$ lowers $Q_1$ to the floor in
the full packing.
It also makes $Q_2,Q_3$ x-disjoint from one another and from every square
with index at least $4$, all of which lie above $Q_1$.

If two of the squares $Q_4,\ldots,Q_{n-1}$ are x-disjoint, they and
$Q_2,Q_3$ form a horizontal chain of sum at least $Z$. Otherwise these
squares are pairwise x-overlapping and form a vertical chain. With $Q_1$
below all of them, this chain has sum $P_n$. Either chain contradicts
$L<\Sigma$. Thus, $L\ge\min(D,P_n,Z)$ in every packing.

The auxiliary term $Z$ is removable. If $\min(D,P_n)\le s_0+s_1$,
use Lemma~\ref{lem:chain}. Otherwise $D>s_0+s_1$, which gives
$s_3+s_{n-1}>s_0\ge s_1$ and $s_2+s_{n-2}\ge s_3+s_{n-1}>s_1$. It follows
that $Z>D$, proving \eqref{eq:column-bound}.
\end{proof}

\begin{theorem}\label{thm:six}
For every $s\in\ell_2^\downarrow$,
\begin{equation}\label{eq:L6}
L_6^*(s)=\max(s_0+s_1,\ s_2+s_3+s_4,\ s_1+s_4+s_5).
\end{equation}
\end{theorem}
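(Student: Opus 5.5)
The upper bound is Proposition~\ref{prop:six-upper}, so only the lower bound $L_6^*(s)\ge\max(s_0+s_1,\,s_2+s_3+s_4,\,s_1+s_4+s_5)$ remains. I would prove it term by term, in the order below.

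The first term is Lemma~\ref{lem:chain}. The second follows from Proposition~\ref{prop:small-formulas}: $L_6^*(s)\ge L_5^*(s)\ge s_2+s_3+s_4$, using monotonicity from Lemma~\ref{lem:function} after truncating to the five largest squares.

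For the third term, apply the column bound, Proposition~\ref{prop:column-bound}, with $n=6$. There $P_6(s)=s_1+s_4+s_5$, so
\[
L_6^*(s)\ge\min(s_1+s_3+s_5,\ s_1+s_4+s_5)=s_1+s_4+s_5,
\]
because $s_3\ge s_4$. The minimum is attained by $P_6$ itself. Combining the three bounds gives the stated formula.

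No step should be a real obstacle, since the geometric work was done in Lemmas~\ref{lem:pane} and~\ref{lem:fixed-core} and Proposition~\ref{prop:column-bound}. The only point to check is that the column bound genuinely applies at $n=6$, i.e.\ that its hypotheses hold for all $s\in\ell_2^\downarrow$. This is handled inside that proposition by Lemma~\ref{lem:positive}, so the theorem follows directly.
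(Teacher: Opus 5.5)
Your proposal is correct and follows the paper's proof essentially verbatim. You take the upper bound from Proposition~\ref{prop:six-upper}, and the terms $s_0+s_1$ and $s_2+s_3+s_4$ from Lemma~\ref{lem:chain} and from $L_6^*\ge L_5^*$ with \eqref{eq:L5}; the term $s_1+s_4+s_5$ comes from Proposition~\ref{prop:column-bound} at $n=6$, using $s_3\ge s_4$, whose proof already reduces to positive sides via Lemma~\ref{lem:positive}.
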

\begin{proof}
Proposition~\ref{prop:six-upper} proves the upper bound.
Lemma~\ref{lem:chain} gives $L_6^*\ge s_0+s_1$, and
$L_6^*\ge L_5^*\ge s_2+s_3+s_4$ by~\eqref{eq:L5}. Proposition~\ref{prop:column-bound} with $n=6$ gives the remaining lower bound
$L_6^*\ge\min(s_1+s_3+s_5,\ s_1+s_4+s_5)=s_1+s_4+s_5$,
because $s_3\ge s_4$. This proves~\eqref{eq:L6}.
\end{proof}

\section{Sharp bounds on the number of sides in a sum}\label{sec:longest}
Proposition~\ref{prop:strategy} guarantees that every value $L_n^*(s)$ is some
subset sum. We now determine how many sides that sum may need. For
$s\in\ell_2^\downarrow$, call $I\subseteq\{0,\ldots,n-1\}$ a
\emph{representing set} if $L_n^*(s)=\sum_{i\in I}s_i$, and put
$q_n(s)=\min\{|I|:I\text{ represents }s\}$.
We prove $\max_{s\in\ell_2^\downarrow}q_n(s)=n-3$ for $n\ge6$.

\subsection{Representing sums with at most \texorpdfstring{$n-3$}{n-3} sides}\label{sec:short-sums}
We begin with two placements that work for any number of squares.
\begin{proposition}[Two general constructions]\label{prop:two-placements}
For $n\ge6$ and $P_n$ as in Proposition~\ref{prop:column-bound}, one has
\begin{equation}\label{eq:column}
L_n^*(s)\le\max(s_0+s_1,\ s_2+s_3+s_4,\ P_n).
\end{equation}
Also,
\[
L_n^*(s)\le\max\bigg(s_0+s_1,\ s_0+s_2+s_3,\
\sum_{\substack{2\le i<n\\i\ \mathrm{even}}}s_i\bigg).
\]
In particular, if $s_2+s_3\le s_1$, then
\begin{equation}\label{eq:shelves-small}
L_n^*(s)\le\max\bigg(s_0+s_1,\
\sum_{\substack{2\le i<n\\i\ \mathrm{even}}}s_i\bigg).
\end{equation}
\end{proposition}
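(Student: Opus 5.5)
Both bounds will come from explicit coordinates, checked the same way as in Propositions~\ref{prop:small-formulas} and~\ref{prop:six-upper}. For each $L$ at least the stated maximum, I will write down lower left corners; check containment and one fixed separation for every pair with the coefficient test~\eqref{eq:prefix-test}; and then invoke Lemma~\ref{lem:coords} and Proposition~\ref{prop:strategy} to get the upper bound on $L_n^*$. By Lemma~\ref{lem:positive}(ii) applied to each inequality (or simply because zero squares may be dropped), I may assume all sides are positive.

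\emph{Column construction for~\eqref{eq:column}.} This extends the six-square layout~\eqref{eq:six-coordinates}. I keep
\[
Q_0=(0,0),\quad Q_1=(L-s_1,0),\quad Q_2=(0,L-s_2),\quad Q_3=(s_2,L-s_3),\quad Q_4=(L-s_4,L-s_4),
\]
so $Q_2,Q_3,Q_4$ form the top shelf. The squares $Q_5,\ldots,Q_{n-1}$ are stacked in a right-justified column above $Q_1$ and below $Q_4$: set $x_i=L-s_i$ and $y_i=s_1+\sum_{5\le k<i}s_k$. Then $Q_1,Q_5,\ldots,Q_{n-1},Q_4$ is a vertical chain of sum $P_n$, so the column fits vertically exactly when $L\ge P_n$. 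The remaining separations must be checked:
\begin{enumerate}[label=\textup{(\arabic*)}]
\item each $Q_i$ with $i\ge5$ lies right of $Q_0$, because $s_0+s_i\le s_0+s_1\le L$;
\item each such $Q_i$ lies right of $Q_3$, which needs $s_2+s_3+s_i\le L$; this follows from $L\ge s_2+s_3+s_4$ and $s_i\le s_4$;
\item each such $Q_i$ lies right of $Q_2$ for the same reason.
\end{enumerate}
Since $Q_2,Q_3$ lie at the top, they clear $Q_0$ vertically exactly as in the six-square case. These verifications are routine.

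\emph{Shelf construction for the second bound.} Here $Q_0$ is at $(0,0)$ and $Q_1$ at $(L-s_1,0)$. Stack $Q_2,Q_3$ vertically above $Q_0$ in the left column, which needs $L\ge s_0+s_2+s_3$. Fill the region above the floor pair with pairs $(Q_{2k},Q_{2k+1})$ for $k\ge1$, stacked as rows: $Q_{2k}$ on the left wall and $Q_{2k+1}$ on the right wall, both at height $y=h_k$. Here $h_1=s_0$ (the top of $Q_0$) and $h_{k+1}=h_k+s_{2k}$. The heights then telescope to $s_0$ plus the sum of the even-indexed sides, which is the wrong shape for the stated bound.

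The first fix to try is to use the floor $Q_0,Q_1$ as the "first row", with even sides $s_2,s_4,\ldots$ stacked in a left column from the top down to $Q_0$. The stated maximum suggests the even-indexed squares form a vertical chain of sum $\sum_{i\ge2\text{ even}} s_i$ running the full height of the left side. So the plan becomes: place $Q_2,Q_4,\ldots$ in a left column from the top wall down, and pair each $Q_{2k+1}$ with $Q_{2k}$ in the same row, placed immediately right of it at $x=s_{2k}$. Then $s_{2k}+s_{2k+1}\le s_2+s_3$, so each row fits to the left of $Q_1$ when $s_2+s_3+s_1\le L$.

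This is where I expect the main obstacle: deciding which of $Q_0$ and $Q_1$ the column sits above, and where $Q_1$ goes. I will put $Q_0$ at the \emph{top} right instead, at $(L-s_0,L-s_0)$, and $Q_1$ at the bottom right, at $(L-s_1,0)$. The rows $Q_{2k},Q_{2k+1}$ live in the strip left of both, of width $L-s_0$, which they fill since $s_{2k}+s_{2k+1}\le s_2+s_3\le L-s_0$. Vertically they are constrained only by the left column sum $\sum s_{\mathrm{even}}\le L$. Finally, $Q_0$ and $Q_1$ are stacked on the right, which needs $s_0+s_1\le L$.

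Under the hypothesis $s_2+s_3\le s_1$ we have $s_0+s_2+s_3\le s_0+s_1$, so the middle term is absorbed and~\eqref{eq:shelves-small} follows.
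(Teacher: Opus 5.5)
Your proposal is correct and takes essentially the paper's approach. For \eqref{eq:column} you extend the six-square coordinates with a right-justified column between $Q_1$ and $Q_4$; the paper instead puts $Q_2,Q_3$ on a shelf at height $s_0$ and the column $Q_4,\ldots,Q_{n-1}$ at $x=\max(s_0,s_2+s_3)$. Your final layout for the second bound, with $Q_0=(L-s_0,L-s_0)$ and $Q_1=(L-s_1,0)$, is the paper's two-shelf packing turned on its side, with each $Q_{2k+1}$ beside $Q_{2k}$ instead of on its own shelf at height $s_0+s_2$, and it yields the same three constraints. In a final write-up, drop the abandoned intermediate attempts.
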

\begin{proof}
For the first construction, set $c=\max(s_0,s_2+s_3)$ and place
$Q_0,\ldots,Q_3$ with respective lower left corners
\[
(0,0),\quad (s_0,0),\quad (0,s_0),\quad (s_2,s_0).
\]
For $4\le i\le n-1$, put
$(x_i,y_i)=(c,\ s_1+\sum_{j=4}^{i-1}s_j)$.
The column squares are mutually y-disjoint, lie above $Q_1$, and lie right
of $Q_0,Q_2,Q_3$. The shelf squares $Q_2,Q_3$ are x-disjoint and lie above
the two bottom squares. The horizontal extent is
$\max(s_0+s_1,c+s_4)=\max(s_0+s_1,s_2+s_3+s_4)$, since $s_4\le s_1$.
The vertical extent is $\max(s_0+s_2,P_n)\le\max(s_0+s_1,P_n)$.

For the second construction, retain $Q_0,Q_1$ at the bottom. Place the
even-indexed squares $Q_2,Q_4,\ldots$ consecutively from left to right
on the shelf at height $s_0$, and the odd-indexed squares
$Q_3,Q_5,\ldots$ on the shelf at height $s_0+s_2$.
Their heights are at most $s_2$ and $s_3$, respectively. By termwise
comparison, the odd-indexed shelf is no wider than the even-indexed one.
The bounds follow from the horizontal and vertical extents.
\end{proof}

We next show that representing sets with at least $n-2$ elements can be
avoided. Complementation is convenient: for any index sets $I,J$, the inequality
$\sum_{i\in I}s_i<\sum_{j\in J}s_j$ is equivalent to
$\sum_{j\in J^c}s_j<\sum_{i\in I^c}s_i$, since a subset sum and
its complementary sum add to the total side sum.
\begin{theorem}[Representing sums with at most $n-3$ sides]\label{thm:long-upper}
Let $n\ge6$ and $s\in\ell_2^\downarrow$ with $s_{n-1}>0$. If a
representing set $I$ has $|I|\ge n-2$, then $L_n^*(s)=s_0+s_1$.
For every $s\in\ell_2^\downarrow$, including vectors with zero entries,
$q_n(s)\le n-3$.
\end{theorem}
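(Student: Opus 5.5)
The plan is to compare the long representing sum with the upper bounds of Proposition~\ref{prop:two-placements} by complementation. Write $T=\sum_{i<n}s_i$. Suppose $s_{n-1}>0$ and $L:=L_n^*(s)=\sum_{i\in I}s_i$ with $|I|\ge n-2$. Then $J=I^c$ has at most two elements, and
\[
L=T-\sum_{j\in J}s_j,\qquad \sum_{j\in J}s_j\le s_0+s_1 .
\]
Any upper bound $L\le\sum_{k\in K}s_k=T-\sum_{k\in K^c}s_k$ with $\sum_{K^c}s_k>s_0+s_1$ would give $L<L$. So each candidate upper bound whose complement sums to more than $s_0+s_1$ is ruled out as the active term, and the maximum must be attained by $s_0+s_1$. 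Combined with $L\ge s_0+s_1$ from Lemma~\ref{lem:chain}, this gives $L=s_0+s_1$.

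The first case uses~\eqref{eq:column}: $L\le\max(s_0+s_1,\,s_2+s_3+s_4,\,P_n)$. The term $s_2+s_3+s_4$ has complement containing $\{0,1,5\}$ (here $n\ge6$ is used). By positivity its complement sum exceeds $s_0+s_1$, so this term is ruled out. The term $P_n$ has complement $\{0,2,3\}$, with sum $s_0+s_2+s_3$, which exceeds $s_0+s_1$ exactly when $s_2+s_3>s_1$. So when $s_2+s_3>s_1$ both non-$s_0+s_1$ terms are excluded, and hence $L=s_0+s_1$.

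In the remaining case $s_2+s_3\le s_1$, I will use~\eqref{eq:shelves-small} instead. The even-index sum $\sum_{2\le i<n,\ i\text{ even}}s_i$ has complement containing $\{0,1,3,5\}$, whose sum strictly exceeds $s_0+s_1$ since $s_3,s_5>0$. So again only $s_0+s_1$ can be active, and $L=s_0+s_1$. The main subtlety I expect is only this case split: recognizing that $P_n$ alone fails when $s_2+s_3\le s_1$, and that the shelf construction covers exactly that regime. No new geometry is needed.

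For the bound $q_n(s)\le n-3$ with positive sides, take a minimal representing set. If it has size at least $n-2$, the first part shows $\{0,1\}$ represents $s$, and $2\le n-3$. If zeros occur, let $k$ be the number of positive entries among $s_0,\dots,s_{n-1}$, so that $L_n^*(s)=L_k^*(s)$ by Proposition~\ref{prop:strategy}.
\begin{itemize}
\item If $k\ge6$, apply the positive case with $k$ in place of $n$, obtaining a representing set of size at most $k-3\le n-3$.
\item If $k\le5$, Proposition~\ref{prop:small-formulas} expresses $L_k^*(s)$ as $s_0$ ($k=1$), $s_0+s_1$ ($2\le k\le4$), or $\max(s_0+s_1,s_2+s_3+s_4)$ ($k=5$). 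Each is a sum of at most $3\le n-3$ sides.
\end{itemize}
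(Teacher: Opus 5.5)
Your proof is correct and follows the paper's argument essentially step for step. It uses the same case split on $s_2+s_3$ versus $s_1$, the column and two-shelf constructions of Proposition~\ref{prop:two-placements}, complementation against $\sum_{j\in J}s_j\le s_0+s_1$, and the same reduction to the positive prefix or to the formulas for at most five squares. The only cosmetic difference is that you apply complementation uniformly to every term. The paper instead handles $s_2+s_3+s_4$ and the even-index sum through the tail inequality $\sum_{i\in I}s_i\ge\sum_{i=2}^{n-1}s_i$, which is the same estimate.
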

\begin{proof}
Let $J=I^c$. Then $|J|\le2$, so $\sum_{j\in J}s_j\le s_0+s_1$ and
\begin{equation}\label{eq:tail-representing}
\sum_{i\in I}s_i\ge\sum_{i=2}^{n-1}s_i.
\end{equation}
The chain inequality gives $\sum_{i\in I}s_i=L_n^*\ge s_0+s_1$. Suppose it is strictly
larger; we obtain a packing of side strictly less than $\sum_{i\in I}s_i$.

If $s_1<s_2+s_3$, use Proposition~\ref{prop:two-placements}. Its first term is
smaller than $\sum_{i\in I}s_i$ by assumption. Its second is smaller by
\eqref{eq:tail-representing}, since the tail beginning with $s_5$ is
nonempty and positive. Its third is smaller by complementation, since
$\sum_{j\in J}s_j\le s_0+s_1<s_0+s_2+s_3$.
Hence all three terms of~\eqref{eq:column} are smaller than $\sum_{i\in I}s_i$.

If $s_2+s_3\le s_1$, use~\eqref{eq:shelves-small}. The first term is smaller
than $\sum_{i\in I}s_i$. The even-indexed sum beginning with $s_2$ is strictly smaller
than $\sum_{i=2}^{n-1}s_i\le \sum_{i\in I}s_i$, because the latter also includes
$s_3>0$. This is again a contradiction.

Thus, when all sides are positive, a long representing set forces
$L_n^*=s_0+s_1$, and it may be replaced by $\{0,1\}$. Since
$2\le n-3$ and some representing set exists by
Proposition~\ref{prop:strategy}, this proves $q_n(s)\le n-3$ for positive
vectors. If $s_{n-1}=0$, let $k<n$ be the number of positive entries among
$s_0,\ldots,s_{n-1}$.
The assertion follows by applying the result to the positive prefix when
$k\ge6$, or by the formulas for at most five squares when $k\le5$.
In all cases the resulting representing set has at most $n-3$ entries.
\end{proof}
The two constructions serve different parameter ranges. For example,
with $n=10$ and sides $(5,5,2,\ldots,2)$, the column construction gives an
upper bound of $17$,
whereas the two-shelf construction gives the exact value $10$.

\subsection{Sharpness and consequences}\label{sec:sharpness}
We now show that the bound $n-3$ is attained on an open set of side
vectors.
\begin{figure}[b]\centering
\begin{minipage}[t]{0.5\textwidth}\centering
\begin{tikzpicture}[x=0.060724cm,y=0.060724cm]
\useasboundingbox (0,-9.88) rectangle (85.88,76);
\filldraw[fill=black!7,draw=black,line width=.35pt] (0,0) rectangle (36,36);
\node[font=\scriptsize,inner sep=0pt] at (18,18) {$0$};
\filldraw[fill=black!7,draw=black,line width=.35pt] (36,0) rectangle (72,36);
\node[font=\scriptsize,inner sep=0pt] at (54,18) {$1$};
\filldraw[fill=black!7,draw=black,line width=.35pt] (0,36) rectangle (33,69);
\node[font=\scriptsize,inner sep=0pt] at (16.5,52.5) {$2$};
\filldraw[fill=black!7,draw=black,line width=.35pt] (33,36) rectangle (66,69);
\node[font=\scriptsize,inner sep=0pt] at (49.5,52.5) {$3$};
\filldraw[fill=black!7,draw=black,line width=.35pt] (68,36) rectangle (76,44);
\node[font=\scriptsize,inner sep=0pt] at (72,40) {$4$};
\filldraw[fill=black!7,draw=black,line width=.35pt] (68,44) rectangle (76,52);
\node[font=\scriptsize,inner sep=0pt] at (72,48) {$5$};
\filldraw[fill=black!7,draw=black,line width=.35pt] (68,52) rectangle (76,60);
\node[font=\scriptsize,inner sep=0pt] at (72,56) {$6$};
\filldraw[fill=black!7,draw=black,line width=.35pt] (68,60) rectangle (76,68);
\node[font=\scriptsize,inner sep=0pt] at (72,64) {$7$};
\filldraw[fill=black!7,draw=black,line width=.35pt] (68,68) rectangle (76,76);
\node[font=\scriptsize,inner sep=0pt] at (72,72) {$8$};
\draw[line width=.7pt] (0,0) rectangle (76,76);
\filldraw[fill=vc!25,draw=vc!70!black,line width=.25pt] (79.04,0) rectangle (84.74,36);
\node[font=\tiny,inner sep=0pt] at (81.89,18) {$1$};
\filldraw[fill=vc!25,draw=vc!70!black,line width=.25pt] (79.04,36) rectangle (84.74,44);
\node[font=\tiny,inner sep=0pt] at (81.89,40) {$4$};
\filldraw[fill=vc!25,draw=vc!70!black,line width=.25pt] (79.04,44) rectangle (84.74,52);
\node[font=\tiny,inner sep=0pt] at (81.89,48) {$5$};
\filldraw[fill=vc!25,draw=vc!70!black,line width=.25pt] (79.04,52) rectangle (84.74,60);
\node[font=\tiny,inner sep=0pt] at (81.89,56) {$6$};
\filldraw[fill=vc!25,draw=vc!70!black,line width=.25pt] (79.04,60) rectangle (84.74,68);
\node[font=\tiny,inner sep=0pt] at (81.89,64) {$7$};
\filldraw[fill=vc!25,draw=vc!70!black,line width=.25pt] (79.04,68) rectangle (84.74,76);
\node[font=\tiny,inner sep=0pt] at (81.89,72) {$8$};
\end{tikzpicture}\par
\smallskip{\footnotesize $L_9^*=76$\\ $s=(36,36,33,33,8,8,8,8,8)$}
\end{minipage}
\caption{The member of the sharpness family \eqref{eq:sharp-family}
with $m=4$. The six-term vertical chain has side sum $76$,
whereas $s_0+s_1=72$, $s_2+s_3+s_4=74$, and $s_1+s_3+s_8=77$.
The lower bound proves that the column cost is optimal.}
\label{fig:long-column}
\end{figure}
\begin{theorem}[Sharpness]\label{thm:long-sharp}
For every $n\ge6$, there is a nonempty open set $U$ of strictly
decreasing positive $n$-tuples such that, for all $s\in\ell_2^\downarrow$
with $(s_0,\ldots,s_{n-1})\in U$,
\[
L_n^*(s)=P_n(s)=s_1+\sum_{i=4}^{n-1}s_i\quad\text{and}\quad q_n(s)=n-3.
\]
\end{theorem}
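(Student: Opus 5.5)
We plan to take $U$ to be a small open neighborhood of a perturbed member of the family in Figure~\ref{fig:long-column}. For $m=n-5\ge1$, consider the base point
\[
s=(a,a,b,b,e,\ldots,e)
\]
with $n-4$ copies of $e$, chosen so that three conditions hold strictly:
\[
P_n=a+(n-4)e>2a,\qquad P_n>b+b+e,\qquad s_1+s_3+s_{n-1}=a+b+e>P_n .
\]
The figure's example $a=36$, $b=33$, $e=8$, $n=9$ shows the intended regime: $b$ is slightly below $a$, and $(n-5)e$ lies slightly below $b$ but above $a-(n-4)e$. Perturb this point to a strictly decreasing vector. The three strict inequalities, together with finitely many further strict inequalities used below, persist on a small open set $U$.

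On $U$ we first prove $L_n^*=P_n$.

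\emph{Lower bound.} Proposition~\ref{prop:column-bound} gives $L_n^*\ge\min(s_1+s_3+s_{n-1},P_n)=P_n$, since $s_1+s_3+s_{n-1}>P_n$ on $U$.

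\emph{Upper bound.} The column construction \eqref{eq:column} gives $L_n^*\le\max(s_0+s_1,s_2+s_3+s_4,P_n)=P_n$.

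Hence $L_n^*(s)=P_n(s)$ on $U$.

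It remains to show that no set $I$ with $|I|<n-3$ has $\sum_{i\in I}s_i=P_n$ on $U$. At the base point, the achievable subset sums are $\alpha a+\beta b+\gamma e$ with $\alpha,\beta\le2$, $\gamma\le n-4$, and the target is $a+(n-4)e$. We will choose the base parameters rationally independent, for instance $a,b,e$ linearly independent over $\mathbb Q$. Then the only way to hit the target is $\alpha=1$, $\beta=0$, $\gamma=n-4$, which uses exactly $n-3$ indices.

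This kills every smaller subset $I$ at the base point, since each is an integer combination with a different coefficient vector. Because there are only finitely many subsets, after shrinking $U$ the differences $\sum_{i\in I}s_i-P_n$ for all $I$ with $|I|<n-3$ stay nonzero by continuity. The base point itself has repeated entries, so we must perturb to strictly decreasing tuples, and the shrinking argument covers exactly this: openness around the base point survives the perturbation. Thus $q_n=n-3$ on $U$.

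Finally, Lemma~\ref{lem:positive} is not needed, since all entries in $U$ are positive, and extending to $s\in\ell_2^\downarrow$ with the given prefix is immediate because $L_n^*$ depends only on the prefix.

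The main obstacle is compatibility of the regime inequalities. We need
\[
a+b+e>a+(n-4)e\quad\Longleftrightarrow\quad b>(n-5)e,
\]
\[
a+(n-4)e>2b+e\quad\Longleftrightarrow\quad(n-5)e>2b-a,
\]
and $(n-4)e>a$. The first two together require $2b-a<b$, that is, $b<a$, which holds. The third is consistent with the first when $a<b+e$. So we will choose $b<a<b+e$ with $(n-5)e$ slightly less than $b$; for $n=6$ this means $e$ slightly less than $b$. We then check that irrational choices exist inside this open cone, which is clear since the cone is open and nonempty.
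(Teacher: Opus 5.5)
Your proposal is correct and follows the paper's proof: a base point of the form $(a,a,b,b,e,\ldots,e)$ where the three comparisons with $P_n$ are strict, Proposition~\ref{prop:column-bound} and the column construction~\eqref{eq:column} to get $L_n^*=P_n$ nearby, and finitely many proper hyperplanes $\sum_{i\in I}s_i=P_n$ with $|I|<n-3$ to be avoided. The only cosmetic difference is that you make the base point itself avoid those hyperplanes by taking $a,b,e$ rationally independent, whereas the paper starts from the explicit integer point $(8m+4,8m+4,8m+1,8m+1,8,\ldots,8)$ and then shrinks to a ball off the hyperplanes.
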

\begin{proof}
Let $n\ge6$, set $m=n-5$, and consider
\begin{equation}\label{eq:sharp-family}
s_0=s_1=8m+4,\qquad s_2=s_3=8m+1,\qquad
s_4=\cdots=s_{n-1}=8.
\end{equation}
There are $m+1$ sides equal to $8$. At this input, $P_n=16m+12$ and
\[
s_0+s_1=P_n-4,\qquad
s_2+s_3+s_4=P_n-2,\qquad
s_1+s_3+s_{n-1}=P_n+1.
\]
The column construction and the column bound of
Proposition~\ref{prop:column-bound} give $L_n^*=P_n$. Figure~\ref{fig:long-column} shows the member of this family with $m=4$. The three displayed comparisons remain strict in a neighborhood of this
vector. Intersecting that neighborhood with the cone of strictly decreasing
positive $n$-tuples gives a nonempty open set on which $L_n^*=P_n$.

For each subset $I$ of cardinality less than
$n-3$, the linear forms $P_n$ and $\sum_{i\in I}s_i$ have different
coefficient vectors. Their equality defines a proper hyperplane. Since
there are only finitely many such subsets, we may choose a smaller open
ball that avoids all of these hyperplanes. On that ball, $P_n$ has no
representation using fewer than $n-3$ sides.
\end{proof}
Finally, we deduce consequences for linear components, for
minimum-of-maxima formulas, and for Lipschitz constants. A \emph{linear
component} means a linear function of $(s_0,\ldots,s_{n-1})$
that agrees with $L_n^*(s)$ on a nonempty open set of strictly decreasing
positive $n$-tuples.

\begin{corollary}[Minimum-of-maxima formulas with at most $n-3$ sides per sum]\label{cor:short-formula}
For $n\ge6$, every linear component is a subset sum with at most $n-3$
entries. There is a finite minimum-of-maxima formula for $L_n^*(s)$ in
$s_0,\ldots,s_{n-1}$ whose constituent sums all have that cardinality bound.
No such representation using only shorter subset sums exists.
\end{corollary}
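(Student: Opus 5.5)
The proof has three parts: linear components, the formula, and the sharpness of the cardinality bound.

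\emph{Linear components.} Let $\lambda$ be a linear component, agreeing with $L_n^*$ on a nonempty open set $V$ of strictly decreasing positive $n$-tuples. By Theorem~\ref{thm:long-upper}, every $s\in V$ has some $I_s$ with $|I_s|\le n-3$ and $L_n^*(s)=\sum_{i\in I_s}s_i$. There are finitely many such $I$, and each set $\{s\in V:\lambda(s)=\sum_{i\in I}s_i\}$ is closed in $V$. By Baire (or by a finite-union argument), one of them has nonempty interior. Two linear functions that agree on an open set coincide, so $\lambda=\sum_{i\in I}s_i$ for that $I$.

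\emph{The formula.} I would start from Proposition~\ref{prop:strategy}, which writes $L_n^*=\min_\sigma\max_{I\in\mathcal C(\sigma)}\sum_{i\in I}s_i$. Two simplifications are available.
\begin{itemize}
\item Restrict to the open cone $\Omega$ of strictly decreasing positive tuples. By continuity it suffices to produce a formula valid there and then extend it to the closure. Both sides are continuous, and the closure of $\Omega$ contains every nonincreasing nonnegative tuple, which covers all of $\ell_2^\downarrow$ by truncation.
\item On $\Omega$, the piecewise linear function $L_n^*$ is covered by finitely many closed polyhedral pieces, each with nonempty interior, on which it equals a linear component $\lambda_k$. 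The plan is to use only these $\lambda_k$.
\end{itemize}
The key step is a min-of-max representation built from the $\lambda_k$ alone. I would use the standard lattice representation of continuous piecewise linear functions (Ovchinnikov's theorem) on the convex domain $\Omega$:
\[
L_n^*=\min_{k}\max_{j\in S_k}\lambda_j,\qquad S_k=\{j:\lambda_j\ge\lambda_k\text{ on the piece }k\}.
\]
Each $\lambda_j$ is a subset sum with at most $n-3$ entries by the first part, so every constituent sum meets the cardinality bound. Citing the lattice theorem, or reproving it briefly, is the main obstacle. An alternative is to prune each strategy cost directly. For every chain $I$ with $|I|\ge n-2$, the argument of Theorem~\ref{thm:long-upper} shows that $\sum_{i\in I}s_i\ge\sum_{i\ge2}s_i$, and that this exceeds the column or two-shelf cost of Proposition~\ref{prop:two-placements}. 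One could then add the two constructions, which use only short sums, to the family and truncate long sums. However, truncating long sums inside a max can change the value, so I prefer the lattice argument.

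\emph{Sharpness.} Suppose some min-of-max formula used only subset sums with fewer than $n-3$ entries. On the open set $U$ of Theorem~\ref{thm:long-sharp}, each value would be one of finitely many such sums. The same Baire argument would then give an open subset on which $P_n$ equals a sum with fewer than $n-3$ entries. This contradicts $q_n=n-3$ on $U$.
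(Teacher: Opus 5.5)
Your route is the paper's. You show that every linear component is a subset sum with at most $n-3$ entries, invoke Ovchinnikov's lattice theorem to write $L_n^*$ as a finite minimum of maxima of its components, and get sharpness because such an expression selects one of its constituents at every point of $U$. The differences are minor:
\begin{itemize}
\item For components, you combine the pointwise bound $q_n\le n-3$ of Theorem~\ref{thm:long-upper} with a finite-cover argument. The paper instead reads components off the strategy regions and uses the first assertion of that theorem to rule out sums of $n-2$ or more entries.
\item For sharpness, no Baire argument is needed, since a single point of $U$ already gives the contradiction.
\end{itemize}

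However, the displayed representation is wrong as written. With $S_k=\{j:\lambda_j\ge\lambda_k\text{ on piece }k\}$, you only get $\max_{j\in S_k}\lambda_j\ge\lambda_k$ on piece $k$, not equality. The formula already fails for $L_6^*$: its components are the three sums in \eqref{eq:L6}, each of your $S_k$ is a singleton, and your formula returns their minimum instead of their maximum. Likewise, $\max(x,0)$ on $\mathbb R$ would become $\min(x,0)$. Your $S_k$ are the index sets of the max--min form. For a min--max form you need $S_k=\{j:\lambda_j\le\lambda_k\text{ on piece }k\}$, so that the $k$th term equals $L_n^*$ on piece $k$. The remaining inequality, $\max_{j\in S_k}\lambda_j\ge L_n^*$ at every other point, is exactly the nontrivial content of Ovchinnikov's theorem, and it uses convexity of the domain.

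The paper sidesteps this bookkeeping by applying the max--min theorem, which is stated for closed convex domains, to $-L_n^*$ on the closed cone of nonincreasing nonnegative $n$-tuples. The components of $-L_n^*$ are the negatives of those of $L_n^*$. That cone's interior is your $\Omega$, so the components are the same ones you identified. With that correction your argument is complete.
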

\begin{proof}
The finite strategy formula subdivides the cone of nonincreasing
nonnegative $n$-tuples into finitely many polyhedral regions on which
$L_n^*(s)$ is a subset sum. A linear component
agrees with one of these subset sums on an open subset of its region, and
hence is a subset sum. On a full-dimensional
region, Theorem~\ref{thm:long-upper} excludes a component of cardinality
at least $n-2$: it would equal $s_0+s_1$ on an open set, hence as a linear
form, contrary to its different coefficient vector. Boundary values follow
by continuity from the full-dimensional regions.

The lattice representation theorem for continuous piecewise-linear
functions on closed convex sets~\cite{Ovchinnikov02} expresses such a
function as a finite maximum of minima of its linear components. Apply it
to $-L_n^*$ on the closed convex cone of nonincreasing nonnegative
$n$-tuples. Since the linear components of $-L_n^*$ are the negatives of
those of $L_n^*$, this expresses $L_n^*$ as a finite minimum of maxima of
its linear components.
The components are the short sums just identified. Finally, any finite
minimum-of-maxima expression selects one of its constituent values at every
input. Theorem~\ref{thm:long-sharp} therefore rules out using only shorter
subset sums.
\end{proof}

\begin{corollary}[Sharp Euclidean Lipschitz constants]\label{cor:sharp-lipschitz}
For every $n\ge6$,
\[
|L_n^*(s)-L_n^*(t)|\le\sqrt{n-3}\,\|s-t\|_2\quad\text{for all }s,t\in\ell_2^\downarrow,
\]
and the constant $\sqrt{n-3}$ cannot be decreased. In particular $L^*$ is not globally Lipschitz on $\ell_2^\downarrow$ in the
$\ell_2$ metric.
\end{corollary}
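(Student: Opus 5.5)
The plan is to obtain the upper bound from the minimum-of-maxima formula of Corollary~\ref{cor:short-formula}, and the sharpness from the open set $U$ of Theorem~\ref{thm:long-sharp}; the final claim about $L^*$ then follows by letting $n\to\infty$.

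\emph{Upper bound.} By Corollary~\ref{cor:short-formula}, $L_n^*$ is a finite minimum of maxima of subset sums $\sum_{i\in I}s_i$ with $|I|\le n-3$. For such an $I$, Cauchy--Schwarz gives
\[
\Bigl|\sum_{i\in I}(s_i-t_i)\Bigr|\le\sqrt{|I|}\,\|s-t\|_2\le\sqrt{n-3}\,\|s-t\|_2 .
\]
So every constituent sum is $\sqrt{n-3}$-Lipschitz on the cone of nonincreasing nonnegative $n$-tuples. Finite maxima and minima preserve a common Lipschitz constant, as in the proof of Lemma~\ref{lem:continuity}. Restricting to the first $n$ coordinates only decreases $\|s-t\|_2$. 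This gives the inequality for all $s,t\in\ell_2^\downarrow$.

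\emph{Sharpness.} On the open set $U$ of Theorem~\ref{thm:long-sharp}, $L_n^*=P_n=s_1+\sum_{i=4}^{n-1}s_i$. This linear form has $n-3$ coefficients equal to $1$, so its gradient has Euclidean norm $\sqrt{n-3}$. Pick $s\in U$ and set $t=s+\varepsilon e$, where $e$ is the indicator vector of $\{1,4,\ldots,n-1\}$. For small $\varepsilon>0$, $t$ stays in $U$, because $U$ is open and consists of strictly decreasing positive tuples; append zeros to view $t$ in $\ell_2^\downarrow$. Then
\[
L_n^*(t)-L_n^*(s)=(n-3)\varepsilon=\sqrt{n-3}\,\|t-s\|_2 .
\]
Hence no smaller constant works.

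\emph{$L^*$ is not globally Lipschitz.} I take $s,t$ as in the sharpness step, extended by zeros. They have at most $n$ nonzero entries, so $L^*(s)=L_n^*(s)$ and $L^*(t)=L_n^*(t)$. The ratio $|L^*(s)-L^*(t)|/\|s-t\|_2$ therefore equals $\sqrt{n-3}$, which is unbounded as $n\to\infty$.

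All three steps are routine given the two corollaries already proved.
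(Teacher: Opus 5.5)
Your proposal is correct and follows essentially the same route as the paper's proof. You get the upper bound from the short-sum minimum-of-maxima formula of Corollary~\ref{cor:short-formula} together with Cauchy--Schwarz. You get sharpness by perturbing a point of $U$ along the indicator of $\{1,4,\ldots,n-1\}$, and the failure of a global Lipschitz bound for $L^*$ from the zero tails as $n\to\infty$.
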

\begin{proof}
A sum of at most $n-3$ coordinates has gradient of Euclidean norm at most
$\sqrt{n-3}$. Since finite minima and maxima preserve a common Lipschitz constant,
Corollary~\ref{cor:short-formula} gives
\[
|L_n^*(s)-L_n^*(t)|\le\sqrt{n-3}\Bigl(\sum_{i<n}(s_i-t_i)^2\Bigr)^{1/2}\le\sqrt{n-3}\,\|s-t\|_2.
\]

For sharpness, let $U$ be the open set in Theorem~\ref{thm:long-sharp},
on which $L_n^*=P_n$, and let $e\in\mathbb R^n$ have entries $e_1=1$,
$e_i=1$ for $4\le i\le n-1$, and $e_i=0$ otherwise. Choose an $n$-tuple
$s\in U$, identified with its zero-padded sequence. Since $U$ is open, there
is $h>0$ such that $t=s+he$ also belongs to $U$. Then
\[
L_n^*(t)-L_n^*(s)=P_n(t)-P_n(s)=(n-3)h\quad\text{and}\quad
\|t-s\|_2=\sqrt{n-3}\,h.
\]
The ratio of the change in value to the distance is therefore $\sqrt{n-3}$.
Since $s$ and $t$ have zero tails, $L^*(s)=L_n^*(s)$ and $L^*(t)=L_n^*(t)$.
Thus, a Lipschitz constant for $L^*$ on $\ell_2^\downarrow$ would have to be at
least $\sqrt{n-3}$ for every $n$.
\end{proof}
Together, Theorem~\ref{thm:long-upper}, Theorem~\ref{thm:long-sharp}, and the
two corollaries prove Theorem~\ref{thm:intro-longest}.

\section{Seven squares}\label{sec:seven}
\subsection{Three strategies}
Put
\begin{gather}\label{eq:seven-sums}
W=s_3+s_4+s_5+s_6,\qquad H=s_1+s_2+s_5,\\
C_0=s_0+s_5+s_6,\qquad C_3=s_1+s_3+s_6,\qquad R=s_1+s_4+s_5+s_6.\nonumber
\end{gather}
and
\begin{equation}\label{eq:F7}
F_7=\max\bigl(L_6^*,\min(\max(W,H),\max(C_0,C_3),R)\bigr).
\end{equation}
The three strategies are defined by the coordinates in Table~\ref{tab:coords67}. They are named
for the position of $Q_6$: against the top edge, in the interior, and
against the right edge.
\begin{proposition}[Three seven-square strategies]\label{prop:seven-upper}
The coordinates in Table~\ref{tab:coords67} define strategies
$\sigma_{\st{top}},\sigma_{\st{center}},\sigma_{\st{right}}$ with costs
\[
\ell_{\st{top}}=\max(L_6^*,W,H),\quad
\ell_{\st{center}}=\max(L_6^*,C_0,C_3),\quad
\ell_{\st{right}}=\max(L_6^*,R).
\]
For each strategy and every $L$ at least its cost, the corresponding row of
Table~\ref{tab:coords67} gives a packing in $[0,L]^2$. Consequently, for
every $s\in\ell_2^\downarrow$,
\[
L_7^*(s)\le\min(\ell_{\st{top}},\ell_{\st{center}},\ell_{\st{right}})=F_7(s).
\]
\end{proposition}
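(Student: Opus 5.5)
The plan is to treat each row of Table~\ref{tab:coords67} exactly as the six-square construction was treated in Proposition~\ref{prop:six-upper}. For each strategy, fix $L$ at least the claimed cost and verify two kinds of inequalities for the listed coordinates. First, containment \eqref{eq:contain} for each of $Q_0,\ldots,Q_6$. Second, for each of the $21$ pairs, one fixed separation inequality from \eqref{eq:sep}, the same one for every $s\in\ell_2^\downarrow$ and every such $L$. Each inequality has the form $aL+\sum_i c_is_i\ge0$ with $a\ge0$. I certify it by adding $a$ times one of the chain sums that bound $L$ from below (namely $s_0+s_1$, $s_2+s_3+s_4$, $s_1+s_4+s_5$, or the strategy-specific sums $W,H$, $C_0,C_3$, or $R$). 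I then apply the prefix-sum test \eqref{eq:prefix-test}.

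I expect the coordinates to follow Figure~\ref{fig:seven}.
\begin{itemize}
\item For $\sigma_{\st{right}}$: keep the six-square placement \eqref{eq:six-coordinates} and add $Q_6$ at the right wall, stacked in the column $Q_1,Q_5,Q_6,Q_4$. That is, $(x_6,y_6)=(L-s_6,\,s_1+s_5)$, so this column gives the vertical chain of sum $R$.
\item For $\sigma_{\st{top}}$: the top shelf becomes $Q_3,Q_4,Q_6,Q_5$, of sum $W$, and the right column becomes $Q_1,Q_2,Q_5$, of sum $H$.
\item For $\sigma_{\st{center}}$: $Q_6$ sits in the interior, immediately right of $Q_0$ and immediately above $Q_1$. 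This produces the horizontal chain $Q_0,Q_6,Q_5$ (sum $C_0$) and the vertical chain $Q_1,Q_6,Q_3$ (sum $C_3$).
\end{itemize}

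Once the uniform separations are checked, Lemma~\ref{lem:coords} with $g$ equal to the claimed cost gives strategies $\sigma$ with $\ell_\sigma\le g$. For equality, I exhibit chains of each strategy realizing every term of $g$:
\begin{itemize}
\item $\{0,1\}$, a chain of length three realizing $s_2+s_3+s_4$, and one realizing $s_1+s_4+s_5$ (or observe that $L_6^*\le\ell_\sigma$ automatically, because deleting $Q_6$ leaves a six-square strategy and Theorem~\ref{thm:six} applies);
\item the strategy-specific chains listed above.
\end{itemize}
Lemma~\ref{lem:strategy-side} then shows that the coordinates pack in $[0,L]^2$ for every $L\ge\ell_\sigma$, and Proposition~\ref{prop:strategy} yields $L_7^*\le\min(\ell_{\st{top}},\ell_{\st{center}},\ell_{\st{right}})$.

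It remains to identify this minimum with $F_7$. This is a lattice identity: since each cost is $\max(L_6^*,\cdot)$, we have
\[
\min_k \max(L_6^*,A_k)=\max\bigl(L_6^*,\min_k A_k\bigr),
\]
by distributivity of $\max$ over $\min$, with $A_1=\max(W,H)$, $A_2=\max(C_0,C_3)$, and $A_3=R$.

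The main obstacle is the bookkeeping in the separation checks for pairs involving $Q_6$ and its neighbors, especially in the center strategy. There, $Q_6$ must be shown disjoint from $Q_2,Q_3,Q_4,Q_5$ using only the available lower bounds. For instance, $Q_6$ lying below $Q_3$ needs $s_1+s_6\le L-s_3$, which is exactly $C_3\le L$. Similarly, $Q_6$ left of $Q_5$ needs $s_0+s_6\le L-s_5$, which is exactly $C_0\le L$. The delicate pairs are those where the certifying chain is not obvious, such as $Q_6$ versus $Q_4$ or $Q_2$. In those cases I will try each available lower bound in turn and accept the first whose prefix sums are all nonnegative.
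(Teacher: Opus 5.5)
Your proposal is correct and is essentially the paper's proof: you verify containment and one uniform separation per pair by the coefficient test~\eqref{eq:prefix-test}, invoke Lemma~\ref{lem:coords} with explicit chains for the costs, and finish with $\max(a,\min_j b_j)=\min_j\max(a,b_j)$ and Proposition~\ref{prop:strategy}. Two small corrections. First, the table's center row puts $Q_6$ flush against $Q_5$ at $x_6=L-s_5-s_6$, so $C_0$ enters through ``$Q_6$ right of $Q_0$'' rather than ``$Q_6$ left of $Q_5$''; your left-justified $x_6=s_0$ defines the same strategy, but the statement is about the table's coordinates. Second, the top strategy has no chain with sum $s_1+s_4+s_5$, so there you need your fallback $\ell_\sigma\ge L_6^*$ or the paper's remark that this term is dominated by $H$.
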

\begin{proof}
Containment and pairwise separation follow from the coefficient
test~\eqref{eq:prefix-test}, and Lemma~\ref{lem:coords} gives the
strategies. For example, in the center strategy $Q_6$ lies right of $Q_0$
because $x_6=L-s_5-s_6\ge s_0$ when $L\ge C_0$, and below $Q_3$ because
$y_6+s_6=s_1+s_6\le L-s_3=y_3$ when $L\ge C_3$. This is how $C_0$ and $C_3$
enter the cost. The three strategy costs are the maxima of the following chain sums:
\[
\begin{array}{c|l|l}
&\text{horizontal}&\text{vertical}\\\hline
\st{top}&s_0+s_1,s_2+s_3+s_4,s_3+s_4+s_5+s_6&s_1+s_2+s_5\\
\st{center}&s_0+s_1,s_2+s_3+s_4,s_0+s_5+s_6&s_1+s_4+s_5,s_1+s_3+s_6\\
\st{right}&s_0+s_1,s_2+s_3+s_4&s_1+s_4+s_5+s_6.
\end{array}
\]
Each term of~\eqref{eq:L6} that is not displayed is dominated by a displayed
sum, which gives the three costs. The identity
$\max(a,\min_jb_j)=\min_j\max(a,b_j)$ shows that their minimum is $F_7$, and
Proposition~\ref{prop:strategy} gives the upper bound for $L_7^*$.
\end{proof}
Figure~\ref{fig:seven} shows chains attaining the costs.
\begin{table}[t]
\centering
\footnotesize
\renewcommand{\arraystretch}{1.05}
\begin{tabular}{c@{\qquad}llll}
\toprule
 & $(x_0,y_0)$ & $(x_1,y_1)$ & $(x_2,y_2)$ & $(x_3,y_3)$\\
strategy & $(x_4,y_4)$ & $(x_5,y_5)$ & $(x_6,y_6)$ & \\
\midrule
$\st{top}$ & $(0,\,0)$ & $(L-s_1,\,0)$ & $(L-s_2,\,s_1)$ & $(0,\,L-s_3)$ \\
 & $(s_3,\,L-s_4)$ & $(L-s_5,\,L-s_5)$ & $(s_3+s_4,\,L-s_6)$ &  \\[1.2pt]
$\st{center}$ & $(0,\,0)$ & $(L-s_1,\,0)$ & $(0,\,L-s_2)$ & $(s_2,\,L-s_3)$ \\
 & $(L-s_4,\,L-s_4)$ & $(L-s_5,\,s_1)$ & $(L-s_5-s_6,\,s_1)$ &  \\[1.2pt]
$\st{right}$ & $(0,\,0)$ & $(L-s_1,\,0)$ & $(0,\,L-s_2)$ & $(s_2,\,L-s_3)$ \\
 & $(L-s_4,\,L-s_4)$ & $(L-s_5,\,s_1)$ & $(L-s_6,\,s_1+s_5)$ &  \\[1.2pt]
\bottomrule
\end{tabular}
\medskip
\caption{Coordinates of the lower left corners of $Q_0,\dots,Q_6$ for the three strategies $\st{top}$, $\st{center}$, and $\st{right}$, as functions of the tuple $s$ and the side $L$.}
\label{tab:coords67}
\end{table}

\subsection{Cornering and a perpendicular chain cover}
For seven squares, a more general cornering argument replaces the
four-square windowpane. When every triple containing the largest square
clears $\Sigma$, we can move that square to a corner. Each remaining square must
then lie to its right or above it. Those two groups form chains in
perpendicular directions (Figure~\ref{fig:frame}), and the following lemmas show how to select
a pair of chains that accounts for all the remaining squares. This will
force a four-square chain in the proof of Proposition~\ref{prop:seven-lower}.
\begin{figure}[!htbp]\centering
\begin{minipage}[t]{0.45\textwidth}\centering
\begin{tikzpicture}[x=0.0895cm,y=0.0895cm]
\useasboundingbox (0,-5.13) rectangle (43.13,38);
\fill[vc!12] (22,0) rectangle (38,38);
\fill[hc!12] (0,22) rectangle (38,38);
\filldraw[fill=black!7,draw=black,line width=.35pt] (0,0) rectangle (22,22);
\node[font=\scriptsize,inner sep=0pt] at (11,11) {$0$};
\filldraw[fill=black!7,draw=black,line width=.35pt] (23,0) rectangle (38,15);
\node[font=\scriptsize,inner sep=0pt] at (30.5,7.5) {$1$};
\filldraw[fill=black!7,draw=black,line width=.35pt] (24,15) rectangle (38,29);
\node[font=\scriptsize,inner sep=0pt] at (31,22) {$2$};
\filldraw[fill=black!7,draw=black,line width=.35pt] (0,27) rectangle (11,38);
\node[font=\scriptsize,inner sep=0pt] at (5.5,32.5) {$3$};
\filldraw[fill=black!7,draw=black,line width=.35pt] (11,28) rectangle (21,38);
\node[font=\scriptsize,inner sep=0pt] at (16,33) {$4$};
\filldraw[fill=black!7,draw=black,line width=.35pt] (29,29) rectangle (38,38);
\node[font=\scriptsize,inner sep=0pt] at (33.5,33.5) {$5$};
\filldraw[fill=black!7,draw=black,line width=.35pt] (21,30) rectangle (29,38);
\node[font=\scriptsize,inner sep=0pt] at (25,34) {$6$};
\draw[line width=.7pt] (0,0) rectangle (38,38);
\filldraw[fill=hc!25,draw=hc!70!black,line width=.25pt] (0,-4.37) rectangle (11,-1.52);
\node[font=\tiny,inner sep=0pt] at (5.5,-2.945) {$3$};
\filldraw[fill=hc!25,draw=hc!70!black,line width=.25pt] (11,-4.37) rectangle (21,-1.52);
\node[font=\tiny,inner sep=0pt] at (16,-2.945) {$4$};
\filldraw[fill=hc!25,draw=hc!70!black,line width=.25pt] (21,-4.37) rectangle (29,-1.52);
\node[font=\tiny,inner sep=0pt] at (25,-2.945) {$6$};
\filldraw[fill=hc!25,draw=hc!70!black,line width=.25pt] (29,-4.37) rectangle (38,-1.52);
\node[font=\tiny,inner sep=0pt] at (33.5,-2.945) {$5$};
\filldraw[fill=vc!25,draw=vc!70!black,line width=.25pt] (39.52,0) rectangle (42.37,15);
\node[font=\tiny,inner sep=0pt] at (40.945,7.5) {$1$};
\filldraw[fill=vc!25,draw=vc!70!black,line width=.25pt] (39.52,15) rectangle (42.37,29);
\node[font=\tiny,inner sep=0pt] at (40.945,22) {$2$};
\filldraw[fill=vc!25,draw=vc!70!black,line width=.25pt] (39.52,29) rectangle (42.37,38);
\node[font=\tiny,inner sep=0pt] at (40.945,33.5) {$5$};
\end{tikzpicture}\par
\smallskip{\footnotesize corner frame, $L=38$\\ $s=(22,15,14,11,10,9,8)$}
\end{minipage}
\caption{The corner frame of Lemma~\ref{lem:full-frame} for the top
strategy, where every triple through $Q_0$ has side sum at least
$s_0+s_5+s_6=39>L$. The right arm $\{1,2,5\}$ lies in the purple region
$x\ge s_0$, the upper arm $\{3,4,5,6\}$ lies in the teal region $y\ge
s_0$, and the arms share $Q_5$.}
\label{fig:frame}
\end{figure}
\begin{lemma}[Corner frame]\label{lem:full-frame}
Fix a packing of $Q_0,\ldots,Q_N$ of side $L<\Sigma$ and an index
$m\le N$, and suppose every triple of indices in $\{0,\ldots,m\}$ that
contains $0$ clears $\Sigma$. Then $Q_0$ can be translated to a corner in the
packing of $Q_0,\ldots,Q_m$ obtained by removing $Q_{m+1},\ldots,Q_N$. After reflection,
let $Q_0=[0,s_0]^2$ and put
\[
\mathcal R=\{i\in\{1,\ldots,m\}:x_i\ge s_0\},\qquad
\mathcal U=\{i\in\{1,\ldots,m\}:y_i\ge s_0\}.
\]
The arms cover $\{1,\ldots,m\}$. The right arm is a vertical chain with
pairwise x-overlaps, the upper arm is a horizontal chain with
pairwise y-overlaps, and $|\mathcal R\cap\mathcal U|\le1$.
If $Q_0$ is already cornered in the packing of $Q_0,\ldots,Q_N$, these
conclusions about $Q_1,\ldots,Q_m$ require no motion.
\end{lemma}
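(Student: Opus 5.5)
We work throughout in the packing of $Q_0,\ldots,Q_m$ obtained by deleting $Q_{m+1},\ldots,Q_N$. Every side-sum exclusion used below involves only indices in $\{0,\ldots,m\}$, so it holds in this subpacking and remains valid under any motion that preserves a packing. The plan has three steps: corner $Q_0$ by the blocker argument of Lemma~\ref{lem:pane}, then show that every other square lies right of or above $Q_0$, and finally read off the chain structure of the two arms from the clearing hypothesis.

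\textbf{Cornering.} The argument is simpler than in Lemma~\ref{lem:pane}. Suppose $Q_0$ had a horizontal blocker on each side, say $Q_a$ on the left and $Q_b$ on the right. Then $Q_a$ lies left of $Q_0$ and $Q_0$ lies left of $Q_b$, so $Q_a,Q_0,Q_b$ are pairwise x-disjoint and form a horizontal chain. The triple $\{0,a,b\}$ contains $0$, so it clears $\Sigma>L$, contradicting Lemma~\ref{lem:chain}. Hence $Q_0$ has no blocker on at least one side, and we translate it to that vertical wall. The same argument in the vertical direction moves it to a horizontal wall; the exclusions stay valid during both motions. After reflecting, we may take $Q_0=[0,s_0]^2$. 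If $Q_0$ was already cornered in the full packing, we only reflect, and nothing moves.

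\textbf{Covering and the arm structure.} Each $Q_i$ with $1\le i\le m$ is x-disjoint or y-disjoint from $Q_0$. Since $Q_0$ touches the left and bottom walls, no square can lie left of it or below it. So $Q_i$ lies right of $Q_0$, which means $x_i\ge s_0$, or above it, which means $y_i\ge s_0$. Thus $\mathcal R\cup\mathcal U=\{1,\ldots,m\}$.

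Take distinct $i,j\in\mathcal R$. If they were x-disjoint, then $Q_0,Q_i,Q_j$ would be pairwise x-disjoint, since both lie right of $Q_0$. This horizontal triple contains $0$, so it clears $\Sigma$, which is impossible. So any two members of $\mathcal R$ x-overlap, and therefore they are y-disjoint. Hence $\mathcal R$ is a vertical chain with pairwise x-overlaps. The symmetric argument shows that $\mathcal U$ is a horizontal chain with pairwise y-overlaps.

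\textbf{The intersection bound.} Suppose $i,j$ are distinct elements of $\mathcal R\cap\mathcal U$. As members of $\mathcal R$ they x-overlap, and as members of $\mathcal U$ they y-overlap. Two squares that overlap in both projections have overlapping interiors, which is impossible in a packing. Hence $|\mathcal R\cap\mathcal U|\le1$.

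\textbf{Main obstacle.} The only delicate step is cornering. One must check that deleting $Q_{m+1},\ldots,Q_N$ is what makes the translation legitimate, because a square outside $\{0,\ldots,m\}$ could act as a blocker that no hypothesis excludes. One must also check that the second translation does not recreate the horizontal obstruction that the first removed. It does not, because the horizontal argument is symmetric and can simply be rerun after the vertical move, with the exclusions still in force. The other steps follow directly from the chain inequality.
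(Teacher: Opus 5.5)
Your proof is correct and follows the paper's argument step for step. You corner $Q_0$ in the truncated packing using the two-sided blocker exclusion, then use the right-or-above dichotomy, then use the clearing horizontal triple through $Q_0$ to make each arm a chain, and finally use a double-overlap contradiction for $|\mathcal R\cap\mathcal U|\le1$; the paper phrases this last step as x-overlap versus x-disjointness, which is the same fact.

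Your "main obstacle" worry about the second translation is moot. A vertical move leaves $x_0$ unchanged, so $Q_0$ simply stays on the vertical wall and nothing needs to be rerun.
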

\begin{proof}
Blockers on both sides of the horizontal band through $Q_0$ would form a
clearing horizontal triple with it. One side is therefore free; move
$Q_0$ to a vertical wall. Repeat vertically and reflect. Every other square
then lies right of $Q_0$ or above it. Two x-disjoint members of $\mathcal R$
would form a clearing horizontal triple with $Q_0$. They therefore
x-overlap and are y-disjoint. The upper-arm assertion is
symmetric. Two members in the intersection would be required both to
x-overlap, as right-arm members, and to be x-disjoint, as
upper-arm members.
\end{proof}

\begin{lemma}[Perpendicular covering chains]\label{lem:two-cover}
In the frame of Lemma~\ref{lem:full-frame}, with $m\ge1$, there are a vertical chain
$\mathcal V$ and a horizontal chain $\mathcal H$ such that
$\mathcal V\cup\mathcal H=\{1,\ldots,m\}$ and $\mathcal V\cap\mathcal H$
consists of one index. In particular, for $m=6$ some four squares form a
chain.
\end{lemma}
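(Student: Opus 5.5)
The plan is a short case analysis on the two arms $\mathcal R$ and $\mathcal U$ of Lemma~\ref{lem:full-frame}. By that lemma, $\mathcal R$ is a vertical chain, $\mathcal U$ is a horizontal chain, $\mathcal R\cup\mathcal U=\{1,\ldots,m\}$, and $|\mathcal R\cap\mathcal U|\le1$. Three cases will be handled separately.

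\emph{The arms share an index.} If $\mathcal R\cap\mathcal U=\{k\}$, I take $\mathcal V=\mathcal R$ and $\mathcal H=\mathcal U$, and nothing more is needed.

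\emph{One arm is empty.} Suppose, say, $\mathcal U=\emptyset$. Then $\mathcal R=\{1,\ldots,m\}$ is nonempty because $m\ge1$. I take $\mathcal V=\mathcal R$ and let $\mathcal H=\{k\}$ for any $k\in\mathcal R$; a singleton is trivially a horizontal chain. The case $\mathcal R=\emptyset$ is symmetric.

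\emph{Both arms are nonempty and disjoint.} This is the only case with content. The key observations are the following.
\begin{itemize}
\item Every $i\in\mathcal R$ satisfies $x_i\ge s_0$ and $y_i<s_0$, since $i\notin\mathcal U$.
\item Every $j\in\mathcal U$ satisfies $y_j\ge s_0$ and $x_j<s_0$.
\item Hence for $i\in\mathcal R$ and $j\in\mathcal U$, square $j$ can never lie right of $i$. So $i,j$ are x-disjoint exactly when $x_j+s_j\le x_i$.
\item Likewise $j$ can never lie below $i$, so $i,j$ are y-disjoint exactly when $y_i+s_i\le y_j$.
\end{itemize}

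I choose $u\in\mathcal U$ maximizing the right edge $x_u+s_u$. Two subcases follow.
\begin{itemize}
\item If $Q_u$ is y-disjoint from every member of $\mathcal R$, then $\mathcal V=\mathcal R\cup\{u\}$ is a vertical chain. Together with $\mathcal H=\mathcal U$ it covers $\{1,\ldots,m\}$, and the two chains meet only in $u$.
\item Otherwise $Q_u$ y-overlaps some $Q_r$ with $r\in\mathcal R$. By~\eqref{eq:sep} they are then x-disjoint, which by the observation above means $x_u+s_u\le x_r$. By the choice of $u$, every $j\in\mathcal U$ then has $x_j+s_j\le x_r$. So $Q_r$ lies right of all of $\mathcal U$, and $\mathcal H=\mathcal U\cup\{r\}$ is a horizontal chain. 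Together with $\mathcal V=\mathcal R$ it covers $\{1,\ldots,m\}$, and the two chains meet only in $r$.
\end{itemize}

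The only delicate point is the extremal choice of $u$. It is what converts a single failed y-disjointness into x-disjointness of $r$ from the whole upper arm, and it is the step I would check most carefully. The final assertion is then counting: for $m=6$ we have $|\mathcal V|+|\mathcal H|=6+1=7$. So one of the two chains has at least four members, and these four squares form a chain.
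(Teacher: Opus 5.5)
Your proof is correct and follows essentially the paper's argument: the same three-way case split on the arms, the same extremal choice of the rightmost member $u$ of $\mathcal U$, and the same resulting pairs of chains, either $\mathcal R\cup\{u\}$ with $\mathcal U$, or $\mathcal R$ with $\mathcal U\cup\{r\}$. The only difference is in how the disjoint case is split: the paper also takes $r$ topmost in $\mathcal R$, applies the separation dichotomy to the single pair $r,u$, and then uses transitivity within $\mathcal R$, whereas you test $u$ against every member of $\mathcal R$ directly, so the topmost choice is not needed.
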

\begin{proof}
If one arm is empty, the other arm is a chain containing every index, and
we pair it with a singleton. If the arms meet, they are the required
chains. Otherwise the arms are disjoint and nonempty. Let $r$ be topmost in
$\mathcal R$ and $u$ rightmost in $\mathcal U$. Since
\[
x_r\ge s_0>x_u,\qquad y_u\ge s_0>y_r,
\]
either $r$ lies right of $u$, or $u$ lies above $r$. In the first case,
since $u$ is the rightmost member of $\mathcal U$ and lies left of $r$, every
member of $\mathcal U$ lies left of $r$. Thus, $\mathcal R$ and
$\mathcal U\cup\{r\}$ are the required chains. In the
second case, $\mathcal R\cup\{u\}$ and $\mathcal U$ are the required chains
by symmetry. The two chain cardinalities sum to $m+1$. Hence for $m=6$ one of
them is at least four.
\end{proof}

\subsection{Proof of the seven-square formula}
Put $W_{24}=s_2+s_4+s_5+s_6$.
\begin{proposition}[Three lower bounds]\label{prop:seven-lower}
For every $s\in\ell_2^\downarrow$,
\begin{align}
L_7^*(s)&\ge\min(C_0,W),\label{eq:seven1}\\
L_7^*(s)&\ge\min(C_3,R),\label{eq:seven2}\\
L_7^*(s)&\ge\min(C_0,H,W_{24}).\label{eq:seven3}
\end{align}
\end{proposition}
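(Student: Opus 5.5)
By Lemma~\ref{lem:positive} we may assume $s_6>0$. For each of the three bounds we fix a packing of $Q_0,\ldots,Q_6$ of side $L<\Sigma$, where $\Sigma$ is the corresponding right-hand side. We then derive a clearing chain, which contradicts Lemma~\ref{lem:chain}. Since $s$ is nonincreasing, every clearing claim below reduces to comparing index sets termwise.

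\emph{Bound \eqref{eq:seven1}.} Take $\Sigma=\min(C_0,W)$. Every triple containing $0$ has side sum at least $s_0+s_5+s_6=C_0\ge\Sigma$. Hence Lemma~\ref{lem:full-frame} applies with $m=N=6$, and Lemma~\ref{lem:two-cover} gives a chain of four squares among $Q_1,\ldots,Q_6$. Its side sum is at least $s_3+s_4+s_5+s_6=W\ge\Sigma$, which is the required contradiction.

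\emph{Bound \eqref{eq:seven2}.} Take $\Sigma=\min(C_3,R)$. We may assume $\Sigma>s_0+s_1$, since otherwise Lemma~\ref{lem:chain} already gives $L\ge\Sigma$. We then follow the proof of Proposition~\ref{prop:column-bound}. The triples among $0,1,2,3$, the triples $\{0,b,j\}$ with $b\in\{1,2,3\}$, and the triples $\{1,b,m\}$ with $b\in\{2,3\}$ and $m\ge4$ all have sum at least $s_1+s_3+s_6=C_3$. So Lemma~\ref{lem:pane} corners $Q_0$, and Lemma~\ref{lem:fixed-core} with $r=N=6$ lowers $Q_1$ to the floor. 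It also makes $Q_2,Q_3$ x-disjoint from each other and from $Q_4,Q_5,Q_6$, all of which lie above $Q_1$. Two cases remain.
\begin{itemize}
\item If two of $Q_4,Q_5,Q_6$ are x-disjoint, they form with $Q_2,Q_3$ a horizontal chain of sum at least $s_2+s_3+s_5+s_6$. This clears $\Sigma$: from $C_3>s_0+s_1$ we get $s_3+s_6>s_0\ge s_1$, hence $s_2+s_5\ge s_3+s_6>s_1$.
\item Otherwise $Q_4,Q_5,Q_6$ are pairwise y-disjoint and lie above $Q_1$. Then $Q_1,Q_4,Q_5,Q_6$ is a vertical chain of sum $R$.
\end{itemize}

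\emph{Bound \eqref{eq:seven3}.} Take $\Sigma=\min(C_0,H,W_{24})$. As in the first bound, Lemma~\ref{lem:full-frame} with $m=6$ produces the arms. Lemma~\ref{lem:two-cover} then gives a vertical chain $\mathcal V$ and a horizontal chain $\mathcal H$ covering $\{1,\ldots,6\}$ and sharing exactly one index $k$, so $|\mathcal V|+|\mathcal H|=7$.
\begin{itemize}
\item If some chain has five or more members, or has four members other than $\{3,4,5,6\}$, it contains $1$ or $2$. Its sum is then at least $s_2+s_4+s_5+s_6=W_{24}$.
\item Otherwise the two chains are $\{3,4,5,6\}$ and $\{1,2,k\}$ with $k\in\{3,4,5,6\}$. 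If $k\le5$, the second chain has sum at least $s_1+s_2+s_5=H$.
\end{itemize}

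The main obstacle is the residual configuration $\{3,4,5,6\}$, $\{1,2,6\}$, where the shared index is $6$. Say, after interchanging axes, that $\{1,2,6\}$ is vertical and $\{3,4,5,6\}$ is horizontal. I would compare $Q_5$ with $Q_1$ and $Q_2$.
\begin{itemize}
\item If $Q_5$ is y-disjoint from both, then $\{1,2,5\}$ is a vertical chain of sum $H$.
\item Otherwise $Q_5$ y-overlaps $Q_1$ or $Q_2$, and is therefore x-disjoint from it. I then try to show that this square is also x-disjoint from $Q_3,Q_4,Q_6$, using the arm geometry (membership in $\mathcal R$ versus $\mathcal U$ and the choice of topmost or rightmost elements in Lemma~\ref{lem:two-cover}). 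That would produce a horizontal chain of five squares containing $1$ or $2$, of sum at least $W_{24}$.
\end{itemize}
If this direct argument stalls, I would instead re-choose $r$ and $u$ in the proof of Lemma~\ref{lem:two-cover}, so that the shared index is not the smallest square $6$.
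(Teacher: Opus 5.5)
\textbf{There is a gap: the residual case of \eqref{eq:seven3} is left unproved.}

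Your proofs of \eqref{eq:seven1} and \eqref{eq:seven2} are correct. The first matches the paper. The second re-derives Proposition~\ref{prop:column-bound} with $n=7$ (here $P_7=R$ and $s_1+s_3+s_6=C_3$), which the paper simply cites.

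Your reduction for \eqref{eq:seven3} is also sound. A chain of four or more members other than $\{3,4,5,6\}$ clears $W_{24}$, and a shared index $k\le5$ makes $\{1,2,k\}$ clear $H$. However, the residual configuration, with chains $\{3,4,5,6\}$ and $\{1,2,6\}$, is where all the content of \eqref{eq:seven3} lies, and you leave it as a plan. The subcase where $Q_5$ is y-disjoint from $Q_1,Q_2$ is fine. In the other subcase, the claim that $Q_1$ or $Q_2$ is x-disjoint from $Q_3,Q_4,Q_6$ is only announced, not proved. Your fallback of re-choosing $r,u$ is not available either. If the arms meet in $\{6\}$, that is $\mathcal R=\{1,2,6\}$ and $\mathcal U=\{3,4,5,6\}$, then Lemma~\ref{lem:two-cover} returns the arms themselves, and there is nothing to re-choose.

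\textbf{The missing idea} is to return to the arms of Lemma~\ref{lem:full-frame} instead of working with the output chains. Interchange axes so that $\{3,4,5,6\}$ is horizontal.
\begin{enumerate}
\item Members of $\mathcal R$ pairwise x-overlap, so at most one member of this horizontal chain lies in $\mathcal R$. Hence at least three lie in $\mathcal U$.
\item Then $1,2\notin\mathcal U$, since either would join those three in a horizontal chain of side sum at least $W_{24}$. So $1,2\in\mathcal R\setminus\mathcal U$. They form a vertical chain with $y_1,y_2<s_0$; let $Q_t$ be the upper of the two.
\item For $k\in\{3,4,5\}$, we have $k\notin\mathcal R$, since otherwise $\{1,2,k\}\subseteq\mathcal R$ is a vertical chain of sum at least $H$. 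Hence $k\in\mathcal U$ and $y_k\ge s_0>y_t$.
\item If $Q_k$ x-overlapped $Q_t$, it would have to lie above $Q_t$, and $\{1,2,k\}$ would again be a vertical chain. So $Q_3,Q_4,Q_5$ are each x-disjoint from $Q_t$, and they are mutually x-disjoint as members of the four-chain.
\item Therefore $\{t,3,4,5\}$ is a horizontal chain of side sum at least $s_2+s_3+s_4+s_5\ge W_{24}$, a contradiction.
\end{enumerate}

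This is the paper's argument. It uses the upper member $Q_t$ of the pair, not whichever of $Q_1,Q_2$ happens to y-overlap $Q_5$. It needs only a four-chain, not the five-chain you were aiming for.
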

\begin{proof}
By Lemma~\ref{lem:positive}, we may assume $s_6>0$. The first inequality
illustrates the method: when every triple through $Q_0$ clears the
threshold, we can corner $Q_0$, and the resulting arms then force a long
chain.
For~\eqref{eq:seven1}, put $\Sigma=\min(C_0,W)$ and suppose a packing has
side $L<\Sigma$. Every triple through $Q_0$ has side sum at least $C_0$ and
clears $\Sigma$. Corner $Q_0$ and apply
Lemma~\ref{lem:two-cover} to the six remaining squares. The resulting
four-square chain has side sum at least $W$, a contradiction.
Inequality~\eqref{eq:seven2} is Proposition~\ref{prop:column-bound} with $n=7$.

For~\eqref{eq:seven3}, suppose $L<\Sigma:=\min(C_0,H,W_{24})$.
Corner $Q_0$ and form the corner frame on $Q_1,\ldots,Q_6$.
Every four-subset of these indices except $\{3,4,5,6\}$ has side sum
at least $W_{24}$ and clears $\Sigma$. Thus, the four-chain supplied by
Lemma~\ref{lem:two-cover} must be exactly $\{3,4,5,6\}$;
interchange axes to make it horizontal. At most one of its members lies
in the right arm. Hence at least three lie in the upper arm. Neither
$Q_1$ nor $Q_2$ can lie in the upper arm, because it and these three
would form a clearing horizontal quadruple. Thus, $Q_1,Q_2$ lie in the
exclusive right arm and form a vertical chain, whose upper member
$Q_t$ has $y_t<s_0$.

For $k=3,4,5$, the triple $\{Q_1,Q_2,Q_k\}$ has side sum at least $H$ and clears $\Sigma$. Hence $Q_k$ cannot lie in
the right arm. If $Q_k$ x-overlapped $Q_t$, it would lie
above $Q_t$, since $y_k\ge s_0>y_t$, and the squares $Q_1,Q_2,Q_k$
would form a clearing vertical chain. Hence $Q_3,Q_4,Q_5$ are all
x-disjoint from $Q_t$. They are mutually x-disjoint as members of
the forced four-chain. The quadruple $\{t,3,4,5\}$ differs from
$\{3,4,5,6\}$ and therefore clears $\Sigma$, a contradiction.
\end{proof}

\begin{theorem}\label{thm:seven}
For every $s\in\ell_2^\downarrow$,
\[
L_7^*(s)=F_7(s)=\min(\ell_{\st{top}},\ell_{\st{center}},\ell_{\st{right}}),
\]
where $\ell_{\st{top}},\ell_{\st{center}},\ell_{\st{right}}$ are the costs
in Proposition~\ref{prop:seven-upper}.
\end{theorem}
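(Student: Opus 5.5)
The upper bound $L_7^*\le F_7$ is Proposition~\ref{prop:seven-upper}, and $L_7^*\ge L_6^*$ holds by monotonicity (Lemma~\ref{lem:function}). So it remains to prove
\[
L_7^*\ge\min\bigl(\max(W,H),\max(C_0,C_3),R\bigr)
\]
at every $s$. The plan is to derive this purely algebraically from the lower bounds \eqref{eq:seven1}--\eqref{eq:seven3} of Proposition~\ref{prop:seven-lower}, together with $L_7^*\ge L_6^*$. Write $M=\min(\max(W,H),\max(C_0,C_3),R)$. I will show that $\max(L_6^*,\min(C_0,W),\min(C_3,R),\min(C_0,H,W_{24}))\ge M$ by a case analysis on which of the terms in the three lower bounds attains each minimum.

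\emph{Case $C_0\le W$.} Then \eqref{eq:seven1} gives $L_7^*\ge C_0$.
\begin{itemize}
\item[] If moreover $C_3\le R$, then \eqref{eq:seven2} gives $L_7^*\ge C_3$. Hence $L_7^*\ge\max(C_0,C_3)\ge M$.
\item[] If instead $R<C_3$, then \eqref{eq:seven2} gives $L_7^*\ge R\ge M$.
\end{itemize}

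\emph{Case $W<C_0$.} Then $L_7^*\ge W$. The second bound again gives $L_7^*\ge\min(C_3,R)$; if that minimum is $R$ we are done. So the remaining subcase is $C_3<R$ with $L_7^*\ge\max(W,C_3)$. Here we need either $H$ or $\max(C_0,C_3)$.
\begin{itemize}
\item[] First, $W_{24}\ge W$, since $s_2\ge s_3$. Also $C_0>W$ by assumption. So \eqref{eq:seven3} yields $L_7^*\ge\min(C_0,H,W_{24})$, which is at least $\min(H,W)$.
\item[] If $H\le W$, then $\max(W,H)=W\le L_7^*$ and we are done.
\item[] Otherwise $H>W$, and the bound is $\min(C_0,H,W_{24})>W$, which is not immediately enough. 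Then I would compare: if $\min(C_0,H,W_{24})$ equals $H$ or $C_0$, it dominates $\max(W,H)$ or, combined with $C_3$, dominates $\max(C_0,C_3)$. The bad residual subcase is $W_{24}<\min(C_0,H)$.
\end{itemize}

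\emph{The main obstacle: the residual subcase.} It remains to handle $W_{24}<\min(C_0,H)$ together with $W<C_0$, $C_3<R$, $H>W$. In this subcase I expect a contradiction with the side ordering, or domination by $L_6^*$. For example, $W_{24}<H$ means $s_4+s_6<s_1$, and $C_3<R$ means $s_3<s_4+s_5$. I would try to show via prefix-sum comparisons, as in \eqref{eq:prefix-test}, that these force $R\le\max(L_6^*,W_{24})$ or $\max(W,H)\le\max(L_6^*,W_{24})$. Candidates are $R=s_1+s_4+s_5+s_6$ against $W_{24}+$something, and $H=s_1+s_2+s_5$ against $s_0+s_1$ or $s_2+s_3+s_4$. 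If this algebra fails, the fallback is a further geometric lower bound proved by the cornering and frame method of Lemmas~\ref{lem:full-frame} and~\ref{lem:two-cover}, with threshold $\min(C_0,H,R)$ or similar. In that argument, the forced four-chain must avoid every clearing quadruple, which leaves only configurations contradicting $H$.

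Finally, I would apply Lemma~\ref{lem:positive} to remove any positivity assumptions, which is harmless since all bounds hold for all $s\in\ell_2^\downarrow$.
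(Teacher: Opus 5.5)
\textbf{Gap: the residual subcase is left unproved.} Your reduction is the same as the paper's. After the upper bound and $L_7^*\ge L_6^*$, everything should follow by arithmetic from \eqref{eq:seven1}--\eqref{eq:seven3}. Your case analysis is correct up to the residual subcase $W_{24}<\min(C_0,H)$, together with $W<C_0$, $C_3<R$, $H>W$. That subcase is exactly where $L_6^*$ must be used, and you leave it open. You only list candidate comparisons and then hedge toward an extra geometric lower bound. As written the proof is therefore incomplete. The missing step is one line and needs no new geometry.

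\textbf{How to close it.} Translate the residual hypotheses into side inequalities. $W_{24}<H$ says $s_4+s_6<s_1$, so $s_5+s_6\le s_4+s_6<s_1$. Hence
$C_0=s_0+s_5+s_6<s_0+s_1\le L_6^*\le L_7^*$.
Since $C_3<R$, \eqref{eq:seven2} gives $L_7^*\ge C_3$. Therefore $L_7^*\ge\max(C_0,C_3)\ge M$.

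Alternatively, $W_{24}<C_0$ says $s_2+s_4<s_0$, so $H=s_1+s_2+s_5\le s_1+s_2+s_4<s_0+s_1\le L_7^*$. Together with $L_7^*\ge W$ from \eqref{eq:seven1}, this gives $L_7^*\ge\max(W,H)\ge M$. This is your candidate ``$H$ against $s_0+s_1$'', and it does work. You simply did not extract $s_2+s_4<s_0$ from $W_{24}<C_0$.

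\textbf{How the paper organizes it.} The paper uses the same observation but arranges the argument so the degenerate case never arises. It first disposes of $F_7=L_6^*$ and then assumes $F_7>L_6^*\ge s_0+s_1$. It picks a term at least $F_7$ from each of $\max(C_0,C_3)$ and $\max(W,H)$. In the one branch where \eqref{eq:seven3} is invoked, $C_0\ge F_7>s_0+s_1$ forces $s_5+s_6>s_1$. Hence $W_{24}-H=s_4+s_6-s_1>0$, so all three terms of \eqref{eq:seven3} are at least $F_7$.

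\textbf{Minor point.} Your closing appeal to Lemma~\ref{lem:positive} is unnecessary. Proposition~\ref{prop:seven-lower} already holds for every $s\in\ell_2^\downarrow$.
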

\begin{proof}
Proposition~\ref{prop:seven-upper} gives
$L_7^*\le F_7=\min(\ell_{\st{top}},\ell_{\st{center}},\ell_{\st{right}})$.
For the reverse inequality, note that $L_7^*\ge L_6^*$ by
Lemma~\ref{lem:function}. If $F_7=L_6^*$, this suffices. Otherwise
$F_7>L_6^*\ge s_0+s_1$, and
$F_7\le R$, $F_7\le\max(C_0,C_3)$, and $F_7\le\max(W,H)$.
If $C_3\ge F_7$, use~\eqref{eq:seven2}. Otherwise $C_0\ge F_7$.
If $W\ge F_7$, use~\eqref{eq:seven1}. Otherwise $H\ge F_7$.
Since $C_0\ge F_7>s_0+s_1$, we have $s_5+s_6>s_1$, whence
\[
W_{24}-H=s_4+s_6-s_1\ge s_5+s_6-s_1>0.
\]
All three terms in~\eqref{eq:seven3} are then at least $F_7$.
Thus, $L_7^*\ge F_7$.
\end{proof}

\section{Minimality of convex representations}\label{sec:minimality}
To end the paper, we prove that the complete strategy families of the preceding sections are minimal.
Every strategy cost $\ell_\sigma$ is a maximum of finitely many linear
functions, namely its chain sums, and is therefore convex.

\begin{lemma}[Counting convex costs]\label{lem:count}
Let $D$ be a convex cone and $F:D\to\mathbb R$ positively homogeneous.
Suppose $p_1,\ldots,p_N\in D$ satisfy
\begin{equation}\label{eq:pairwise-super}
F(p_i+p_j)>F(p_i)+F(p_j)\quad\text{for all }i<j.
\end{equation}
Every representation $F=\min_{1\le k\le m}h_k$ with convex $h_k$ on $D$
requires $m\ge N$.
\end{lemma}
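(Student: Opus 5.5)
We argue by pigeonhole. Suppose $F=\min_{1\le k\le m}h_k$ on $D$ with each $h_k$ convex, and suppose for contradiction that $m<N$. For each $i$ choose an index $k(i)$ with $h_{k(i)}(p_i)=F(p_i)$; such an index exists because the minimum is over finitely many functions. Since there are $N$ points and fewer than $N$ functions, two distinct points $p_i,p_j$ with $i<j$ share the same index $k=k(i)=k(j)$. The whole argument is then a single estimate on $h_k$ along the segment from $p_i$ to $p_j$.

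That estimate uses convexity and positive homogeneity together. The midpoint $\tfrac12(p_i+p_j)$ lies in $D$ because $D$ is a convex cone. Convexity of $h_k$ and $F\le h_k$ give
\[
F\bigl(\tfrac12(p_i+p_j)\bigr)\le h_k\bigl(\tfrac12(p_i+p_j)\bigr)\le\tfrac12 h_k(p_i)+\tfrac12 h_k(p_j)=\tfrac12F(p_i)+\tfrac12F(p_j).
\]
Positive homogeneity of $F$ with factor $\tfrac12$ turns the left side into $\tfrac12F(p_i+p_j)$. Multiplying by $2$ yields $F(p_i+p_j)\le F(p_i)+F(p_j)$, which contradicts \eqref{eq:pairwise-super}. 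Hence $m\ge N$.

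No step is a real obstacle. The only care needed is to apply homogeneity to $F$ itself, since the $h_k$ are not assumed homogeneous, and to check that the midpoint stays in the domain. If one prefers not to rely on homogeneity at the factor $\tfrac12$, one can instead use $p_i+p_j=2\cdot\tfrac12(p_i+p_j)$ together with $F(\lambda p)=\lambda F(p)$ for $\lambda=2$. The conclusion is that every convex representation needs at least $N$ pieces.
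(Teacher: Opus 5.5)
Your proposal is correct and follows essentially the same argument as the paper. Both proofs use pigeonhole on which $h_k$ attains the minimum at each $p_i$, then apply convexity of that shared $h_k$ at the midpoint, together with $F\le h_k$ and positive homogeneity of $F$, to contradict \eqref{eq:pairwise-super}.
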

\begin{proof}
Each $h_k\ge F$. At each $p_i$, some member of the finite minimum equals
$F(p_i)$. If the same member were exact at $p_i,p_j$, convexity of $h_k$ and
homogeneity of $F$ would give
\[
F\left(\frac{p_i+p_j}{2}\right)
\le h_k\left(\frac{p_i+p_j}{2}\right)
\le\frac{F(p_i)+F(p_j)}2
<\frac{F(p_i+p_j)}2,
\]
a contradiction. The exact members at the $N$ points must be distinct.
\end{proof}

\begin{theorem}\label{thm:min-counts}
For $n=6,7$, the least numbers $m$ of real-valued convex functions
$h_1,\ldots,h_m$ on $\ell_2^\downarrow$ with $L_n^*(s)=\min_kh_k(s)$ for every
$s\in\ell_2^\downarrow$ are $1$ and $3$, respectively.
These are also the minimum cardinalities of complete strategy families.
\end{theorem}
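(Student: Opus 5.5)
For $n=6$ there is nothing to prove beyond what is already established. Theorem~\ref{thm:six} and Proposition~\ref{prop:six-upper} show that $L_6^*=\ell_\sigma$ for the single strategy $\sigma$ defined by \eqref{eq:six-coordinates}. That cost is a maximum of chain sums and hence convex, so one convex function, and one strategy, suffices. Trivially at least one function is needed.

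For $n=7$ the upper bound is Theorem~\ref{thm:seven}: the family $\{\sigma_{\st{top}},\sigma_{\st{center}},\sigma_{\st{right}}\}$ is complete, and its three costs are convex. For the lower bound I will apply Lemma~\ref{lem:count} with $D=\ell_2^\downarrow$ (a convex cone) and $F=L_7^*$, which is positively homogeneous by Lemma~\ref{lem:function}. The task is to exhibit three points $p_1,p_2,p_3\in\ell_2^\downarrow$, as zero-padded $7$-tuples, satisfying the strict superadditivity \eqref{eq:pairwise-super} for each of the three pairs.

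Since every strategy cost is convex, every complete strategy family is a representation of this kind. The bound $m\ge3$ therefore applies to complete families as well, and the three-strategy family shows that both minima equal $3$.

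The natural candidates are the three side vectors of Figure~\ref{fig:seven}, each of which lies where one strategy is uniquely optimal:
\[
p_{\st{top}}=(22,15,14,11,10,9,8),\quad p_{\st{center}}=(19,16,13,12,10,9,8),\quad p_{\st{right}}=(26,25,21,20,10,9,8),
\]
with $L_7^*$ values $38$, $36$ and $52$. Sums of nonincreasing vectors stay nonincreasing, so all pairwise sums lie in $D$. I will evaluate $L_7^*$ at the three pairwise sums using the exact formula $F_7$ from \eqref{eq:F7} and \eqref{eq:seven-sums}. For example, $p_{\st{top}}+p_{\st{center}}=(41,31,27,23,20,18,16)$ gives:
\begin{itemize}
\item $L_6^*=72$;
\item $\max(W,H)=\max(77,76)=77$;
\item $\max(C_0,C_3)=\max(75,70)=75$;
\item $R=85$.
\end{itemize}
Hence $F_7=75>74=38+36$. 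The remaining two pairs, top with right and center with right, are checked the same way. The mechanism is that at each sum the two strategies that were cheap at the separate summands are each penalized by the other summand, so every strategy incurs a strict convexity gap.

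The main obstacle is ensuring that all three pairs satisfy the strict inequality simultaneously; the arithmetic itself is routine. If a pair involving $p_{\st{right}}$ fails, for instance because $p_{\st{right}}$ has much larger entries and dominates the sum, I will rescale one point by a factor $\lambda>0$. Homogeneity keeps $F(\lambda p)=\lambda F(p)$, but rescaling changes which terms are active at the sums. Alternatively I will perturb the points within the open regions where each strategy is uniquely optimal, and re-verify the three inequalities with $F_7$. Once three valid points are fixed, Lemma~\ref{lem:count} gives $m\ge3$, completing the proof.
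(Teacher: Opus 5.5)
\paragraph{Overall assessment.} Your framework matches the paper's: Lemma~\ref{lem:count} with $F=L_7^*$ on $\ell_2^\downarrow$, three witnesses evaluated through $F_7$, and the observation that complete strategy families are convex representations. The gap is in the witnesses. Your claim that the remaining pairs ``are checked the same way'' is false for the top/right pair.

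\paragraph{The failing pair.} The sum is $p_{\st{top}}+p_{\st{right}}=(48,40,35,31,20,18,16)$. There $L_6^*=88$ and $\max(C_0,C_3)=\max(82,87)=87$, so
\[
F_7=\max\bigl(88,\min(93,87,94)\bigr)=88<90=38+52.
\]
Your heuristic overlooks the third strategy. The center cost is $39$ at $p_{\st{top}}$, through $C_0$, and $53$ at $p_{\st{right}}$, through $C_3$. At the sum, however, neither $C_0$ nor $C_3$ exceeds $s_0+s_1=88$, so the center strategy costs only $88$. The lower bound $m\ge3$ rests entirely on an explicit triple satisfying \eqref{eq:pairwise-super}. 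Your rescaling/perturbation fallback is only a plan, so the proposal does not yet prove the statement.

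\paragraph{The fallback works.} All relevant sums along $p_{\st{top}}+t\,p_{\st{right}}$ are linear in $t$. The $L_6^*$ term is $37+51t$, always below the target $38+52t$. In the minimum, the terms $\max(W,H)=38+55t$ and $R=42+52t$ exceed the target for $t>0$. The remaining comparison, $\max(39+43t,34+53t)>38+52t$, holds exactly when $t<1/9$ or $t>4$.

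The other two pairs hold for every positive ratio:
\begin{itemize}
\item Along $p_{\st{top}}+u\,p_{\st{center}}$, the three terms of the minimum are $38+39u$, $39+36u$, and $42+43u$, all exceeding $38+36u$.
\item Along $p_{\st{center}}+v\,p_{\st{right}}$, the terms are at least $38+55v$, $36+53v$, and $43+52v$, all exceeding $36+52v$.
\end{itemize}

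Hence replacing $p_{\st{right}}$ by $5p_{\st{right}}=(130,125,105,100,50,45,40)$ gives a valid triple:
\begin{itemize}
\item $F_7(p_{\st{top}}+5p_{\st{right}})=299>298$;
\item $F_7(p_{\st{center}}+5p_{\st{right}})=301>296$;
\item $F_7(p_{\st{top}}+p_{\st{center}})=75>74$, as you computed.
\end{itemize}
The paper instead uses $(39,38,32,31,14,13,12)$, $(12,9,8,7,6,5,4)$, and $(16,10,9,8,7,6,5)$ directly. With either verified triple inserted, your argument is complete.
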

\begin{proof}
By Theorems~\ref{thm:six} and~\ref{thm:seven}, the strategy of
Proposition~\ref{prop:six-upper} and the three strategies of
Proposition~\ref{prop:seven-upper} form complete families of the stated
sizes, and their costs are convex. A nonempty
minimum needs at least one member, which proves the six-square case.

For seven squares take
\[
p_1=(39,38,32,31,14,13,12),\quad p_2=(12,9,8,7,6,5,4),\quad
p_3=(16,10,9,8,7,6,5).
\]
Evaluation of~\eqref{eq:F7} gives
\[
\begin{array}{c|rrr}
s&p_1&p_2&p_3\\\hline
L_7^*(s)&77&21&26
\end{array}
\]
and
\[
L_7^*(p_1+p_2)=101>98,\quad
L_7^*(p_1+p_3)=104>103,\quad
L_7^*(p_2+p_3)=48>47.
\]
Apply Lemma~\ref{lem:count}.
\end{proof}

\begin{corollary}[Unit-container parameter sets]\label{rem:unit}
The set $X_6=\{s\in\ell_2^\downarrow:L_6^*(s)\le1\}$ is convex, whereas
$X_7=\{s\in\ell_2^\downarrow:L_7^*(s)\le1\}$ is a union of three convex
subsets and is not a union of two convex subsets.
\end{corollary}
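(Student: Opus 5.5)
The plan is to deduce both assertions from the convex-minimum representations in Theorem~\ref{thm:six} and Theorem~\ref{thm:seven}, together with the counting argument of Lemma~\ref{lem:count} applied to the three points used in Theorem~\ref{thm:min-counts}.

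\emph{Convexity of $X_6$.} By Theorem~\ref{thm:six}, $L_6^*=\ell_\sigma$ for the single strategy of Proposition~\ref{prop:six-upper}. This is a maximum of linear functions, hence convex, so its sublevel set $X_6$ is convex.

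\emph{$X_7$ is a union of three convex sets.} By Theorem~\ref{thm:seven}, $L_7^*=\min(\ell_{\st{top}},\ell_{\st{center}},\ell_{\st{right}})$. It follows that
\[
X_7=\{\ell_{\st{top}}\le1\}\cup\{\ell_{\st{center}}\le1\}\cup\{\ell_{\st{right}}\le1\},
\]
and each of these sets is a sublevel set of a convex function on the convex cone $\ell_2^\downarrow$.

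\emph{$X_7$ is not a union of two convex sets.} Suppose $X_7=K_1\cup K_2$ with $K_1,K_2$ convex. I would take the points $p_1,p_2,p_3$ from the proof of Theorem~\ref{thm:min-counts} and normalize them as $q_i=p_i/L_7^*(p_i)$, which is possible since each $L_7^*(p_i)>0$. By homogeneity, $L_7^*(q_i)=1$, so each $q_i\in X_7$. By pigeonhole, two of them, say $q_i$ and $q_j$, lie in the same $K_k$.

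I would then use the convex combination
\[
z=\frac{a\,q_i+b\,q_j}{a+b},\qquad a=L_7^*(p_i),\quad b=L_7^*(p_j).
\]
This equals $(p_i+p_j)/(a+b)$, and it lies in $\ell_2^\downarrow$ because the cone is convex. Convexity of $K_k$ gives $z\in K_k\subseteq X_7$, that is, $L_7^*(z)\le1$. On the other hand, homogeneity and the superadditivity inequalities checked in Theorem~\ref{thm:min-counts} give
\[
L_7^*(z)=\frac{L_7^*(p_i+p_j)}{L_7^*(p_i)+L_7^*(p_j)}>1.
\]
This contradiction shows that two convex sets cannot suffice.

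The only delicate point is the choice of convex combination. The midpoint of $q_i$ and $q_j$ would not work directly, because the normalizations of $p_i$ and $p_j$ differ. The weights $a$ and $b$ are chosen so that the combination is proportional to $p_i+p_j$, and the strict inequality $L_7^*(p_i+p_j)>L_7^*(p_i)+L_7^*(p_j)$ then yields the contradiction.
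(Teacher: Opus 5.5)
Your proposal is correct and follows the paper's proof essentially step for step: $X_6$ is a sublevel set of the convex function $L_6^*$, $X_7$ is the union of the sublevel sets of the three strategy costs, and pigeonhole is applied to the normalized points $p_i/L_7^*(p_i)$, using the point $(p_i+p_j)/(L_7^*(p_i)+L_7^*(p_j))$ on the segment between two of them together with the strict superadditivity from Theorem~\ref{thm:min-counts}. You also correctly note that the midpoint would not do and that the weights must make the combination proportional to $p_i+p_j$, which is exactly the paper's choice.
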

\begin{proof}
The set $X_6$ is a sublevel set of the convex function $L_6^*$. By
Theorem~\ref{thm:seven}, $X_7$ is the union of the sublevel sets of the
three convex strategy costs $\ell_{\st{top}},\ell_{\st{center}},
\ell_{\st{right}}$. For the last assertion, let $p_1,p_2,p_3$ be the vectors
in the proof of Theorem~\ref{thm:min-counts}, and put
$\hat p_i=p_i/L_7^*(p_i)\in X_7$. For $i\ne j$, the point
$(p_i+p_j)/(L_7^*(p_i)+L_7^*(p_j))$ lies on the segment joining $\hat p_i$
and $\hat p_j$, and its optimal value
$L_7^*(p_i+p_j)/(L_7^*(p_i)+L_7^*(p_j))$ is greater than one. Thus, no
convex subset of $X_7$ contains two of $\hat p_1,\hat p_2,\hat p_3$, whereas
one of two sets covering $X_7$ would contain two of them.
\end{proof}
In the notation of~\cite{Cibulka17}, the points $\hat p_1,\hat p_2,\hat p_3$
form a clique in the invisibility graph of $X_7$, and the convexity number of
$X_7$ is three.

\section*{Acknowledgement of AI assistance}
This project began in June 2026, when I started actively looking for a
formula for $L_6^*$, and I obtained a complete proof of the formula for
$L_7^*$ in July 2026. Throughout this period I used Claude Opus~4.8
(Anthropic) as a research assistant. The candidate formulas came from
geometric intuition: I proposed arrangements of the squares, such as
shelves, columns, and the configurations that became the three strategies
in Figure~\ref{fig:seven}, and read candidate terms off the chains that
determine the container side. The AI system implemented exact solvers and
exhaustive searches over integer side vectors that tested these candidates
and returned counterexamples, whose optimal packings suggested further
arrangements. I designed the lower-bound method: assume that a packing has
side below the asserted bound, use side sums exceeding that side to exclude
chains, and follow the forced overlaps to a contradiction. With the
assistance of Claude Opus~4.8, this approach was refined into its present
form, built from clearing sets, pins, cornering arguments, and a small
number of lower bounds combined by arithmetic. In preparing this
manuscript, I used Claude Opus~5.5 (Anthropic) and GPT-6 Astra Pro to review
the mathematical arguments and to improve the exposition; Claude Opus~5.5
also assisted with typesetting, figures, and exact-arithmetic checks of the
constructions. I have checked all proofs and take full responsibility for
the content of this paper.

\newcommand{\etalchar}[1]{$^{#1}$}

\end{document}